\newtheorem{theorem}{Theorem}[section]
\newtheorem{lemma}{Lemma}[section]
\newtheorem{definition}{Definition}[section]
\newtheorem{proposition}{Proposition}[section]
\newtheorem{remark}{Remark}[section]
\newtheorem{property}{Property}
\newtheorem{aproposition}{Proposition}
\newenvironment{proof}{{\noindent \bf Proof:}}{\hfill$\Box$\medskip}
\definecolor{lred}{rgb}{1,0.8,0.8}
\definecolor{lblue}{rgb}{0.8,0.8,1}
\definecolor{dred}{rgb}{0.6,0,0}
\definecolor{dblue}{rgb}{0,0,0.5}
\definecolor{dgreen}{rgb}{0,0.5,0.5}
\title{Strong calmness of perturbed KKT system for
 a class of conic programming with degenerate solutions
 \footnote{Supported by the National Natural Science Foundation of China under project No.11571120 and the Natural Science Foundation of Guangdong Province under project No.2015A030313214.}}
 \author{Yulan Liu\footnote{School of Applied Mathematics, Guangdong University of Technology(ylliu@gdut.edu.cn.}\ \ {\rm and}\ \
 Shaohua Pan\footnote{School of Mathematics, South China University of Technology, Guangzhou (shhpan@scut.edu.cn).}}
\begin{document}

 \maketitle

 \begin{abstract}
  This paper is concerned with the strong calmness of the KKT solution mapping
  for a class of canonically perturbed conic  programming, which plays a central
  role in achieving fast convergence under situations when the Lagrange multiplier
  associated to a solution of these conic optimization problems is not unique.
  We show that the strong calmness of the KKT solution mapping is equivalent to
  a local error bound for solutions of perturbed KKT system, and is also
  equivalent to the pseudo-isolated calmness of the stationary point mapping
  along with the calmness of the multiplier set map at the corresponding reference point.
  Sufficient conditions are also provided for the strong calmness by establishing
  the pseudo-isolated calmness of the stationary point mapping in terms of
  the noncriticality of the associated multiplier, and the calmness of
  the multiplier set mapping in terms of a relative interior condition for
  the multiplier set. These results cover and extend the existing ones in
  \cite{Hager99,Izmailov12} for nonlinear programming and in \cite{Cui16,Zhang17}
  for semidefinite programming.
  \end{abstract}

 \noindent
 {\bf Keywords:} KKT solution mapping; strong calmness; local error bound;
 pseudo-isolated calmness; noncritical multiplier

 \medskip
 \noindent
 {\bf Mathematics Subject Classification(2010):} 49K40, 90C31, 49J53

 \section{Introduction}\label{sec1}

 Let $\mathbb{X}$ and $\mathbb{Y}$ be two finite dimensional real vector spaces
 equipped with an inner product $\langle \cdot,\cdot\rangle$ and its induced norm $\|\cdot\|$.
 Let $f\!:\mathbb{X}\to\mathbb{R}$ and $g\!:\mathbb{X}\to\mathbb{Y}$ be twice
 continuously differentiable functions. We consider the canonically
 perturbed optimization problem
 \begin{equation}\label{prob}
  \min_{x\in\mathbb{X}}\big\{f(x)-\langle a,x\rangle\!:\ g(x)-b\in \mathcal{K}\big\},
 \end{equation}
 where $(a,b)\in\mathbb{X}\times\mathbb{Y}$ is the perturbation parameter
 and $\mathcal{K}\subseteq\!\mathbb{Y}$ is a nonempty closed convex set.
 Throughout this paper, we assume that $\mathcal{K}$ is $C^2$-cone reducible
 (see Definition \ref{cone-reduce}), which covers all polyhedral convex sets
 and several classes of important non-polyhedral convex cones such as the positive
 semidefinite cone \cite[Corollary 4.6]{BCS99}, the second-order cone \cite[Lemma 15]{BR05}
 and the epigraph cone of the Ky Fan matrix $k$-norm \cite{DingST14}.

 \medskip

 Let $L\!:\mathbb{X}\times\mathbb{Y}\to\mathbb{R}$ be the Lagrange function of problem \eqref{prob}
 without perturbation:
 \[
   L(x,\lambda):=f(x)+\langle \lambda,g(x)\rangle\quad\ \forall(x,\lambda)\in\mathbb{X}\times\mathbb{Y}.
 \]
 For a given perturbation $(a,b)$, the KKT optimality condition for \eqref{prob} takes the form of
  \begin{equation}\label{KKT}
  \left\{\begin{array}{ll}
  \nabla_{\!x}L(x,\lambda)=a;\\
  \lambda\in \mathcal{N}_{\mathcal{K}}(g(x)-b)
  \end{array}\right.\Longleftrightarrow
  \left\{\begin{array}{ll}
  \nabla\!f(x)+\nabla\!g(x)\lambda=a;\\
  g(x)-b=\Pi_{\mathcal{K}}(g(x)-b+\lambda)
  \end{array}\right.
 \end{equation}
 where $\mathcal{N}_{\mathcal{K}}(z)$ denotes the normal cone of $\mathcal{K}$ at $z$
 in the sense of convex analysis \cite{Roc70}, $\Pi_{\mathcal{K}}$ means the projection operator
 onto $\mathcal{K}$, and for any given $\lambda\in\mathbb{Y}$, $\nabla_{\!x}L(\cdot,\lambda)$ is
 the adjoint of $L_x'(\cdot,\lambda)$, the derivative of $L(\cdot,\lambda)$ at $x\in\mathbb{X}$.
 In this paper, for a twice continuously differentiable $h\!:\mathbb{X}\to\mathbb{Y}$,
 we denote by $h'(x)$ the first-order derivative of $h$ at $x$, by $\nabla h(x)$
 the adjoint of $h'(x)$, and by $h''(x)$ the second-order derivative of $h$ at $x$.
 We define the KKT solution mapping $\mathcal{S}_{\rm KKT}\!:\mathbb{X}\times\mathbb{Y}
 \rightrightarrows\mathbb{X}\times\mathbb{Y}$, the stationary point map
 $\mathcal{X}_{\rm KKT}\!:\mathbb{X}\times\mathbb{Y}\rightrightarrows\mathbb{X}$,
 and the multiplier set map $\mathcal{M}\!:\mathbb{X}\times\mathbb{X}\times
 \mathbb{Y}\rightrightarrows\mathbb{Y}$ respectively by
 \begin{align}{}\label{SKKT}
  \mathcal{S}_{\rm KKT}(a,b):=\big\{(x,\lambda)\in\mathbb{X}\times\mathbb{Y}\ |\
  \nabla_{\!x}L(x,\lambda)=a,\lambda\in \mathcal{N}_{\mathcal{K}}(g(x)-b)\big\},\\
  \label{spoint}
  \mathcal{X}_{\rm KKT}(a,b):=\big\{x\in\mathbb{X}\ |\ \exists\,\lambda\in\mathbb{Y}\ {\rm such\ that\ system}\ \eqref{KKT}\ {\rm holds\ at}\ (x,\lambda)\big\},\\
  \mathcal{M}(x,a,b):=\big\{\lambda\in\mathbb{Y}\ |\  (x,\lambda)\in\mathcal{S}_{\rm KKT}(a,b)\big\}.\qquad\qquad\qquad
 \end{align}
 In addition, in the sequel we also need a counterpart of the multifunction $\mathcal{M}$, defined by
 \begin{equation}\label{MX}
  \mathcal{X}(\lambda,a,b):=\big\{x\in\mathbb{X}\ |\ (x,\lambda)\in\mathcal{S}_{\rm KKT}(a,b)\big\}.
 \end{equation}
  Clearly, the multifunction $\mathcal{X}$ can be regarded as
  a localization version of $\mathcal{X}_{\rm KKT}$.

 \medskip

 This work is mainly concerned with the strong calmness of $\mathcal{S}_{\rm KKT}$
 at a reference point $(\overline{x},\overline{\lambda})\in\mathcal{S}_{\rm KKT}(0,0)$
 with $\mathcal{M}(\overline{x},0,0)\ne\{\overline{\lambda}\}$, that is,
 $\overline{x}$ is a degenerate stationary point of the problem \eqref{prob}
 with $(a,b)=(0,0)$. The strong calmness is formally defined as follows.
  \begin{definition}\label{Upper-Lip}
  Let $(\overline{x},\overline{\lambda})$ be a KKT point of the problem \eqref{prob}
  with $(a,b)=(0,0)$ and $\mathcal{M}(\overline{x},0,0)\ne\{\overline{\lambda}\}$.
  The multifunction $\mathcal{S}_{\rm KKT}$ is said to have the strong calmness
  at the origin for $(\overline{x},\overline{\lambda})$
  if there exist $\delta>0,\varepsilon>0$ and a constant $\kappa>0$
  such that for any $(a,b)\in\mathbb{B}_{\delta}((0,0))$ and any
  $(x,\lambda)\in\mathcal{S}_{\rm KKT}(a,b)\cap\mathbb{B}_{\varepsilon}((\overline{x},\overline{\lambda}))$,
  the following estimate holds:
  \[
    \|x-\overline{x}\|+{\rm dist}(\lambda,\mathcal{M}(\overline{x},0,0))\le\kappa\|(a,b)\|.
  \]
 \end{definition}
 This property, different from the locally upper Lipschitz introduced
 by Robinson \cite{Robinson81} for a multifunction,
 is weaker than the isolated calmness of $\mathcal{S}_{\rm KKT}$
 but stronger than its calmness. Moreover, it does not imply
 the isolatedness of the stationary point $\overline{x}$.
 For the definition of (isolated) calmness, the reader may
 refer to \cite{DR09} or Section \ref{sec2}.
 When $\mathcal{M}(\overline{x},0,0)=\{\overline{\lambda}\}$,
 this property becomes the isolated calmness of $\mathcal{S}_{\rm KKT}$
 at the origin.

 \medskip

 To the best of our knowledge, the strong calmness in Definition \ref{Upper-Lip}
 was first introduced by Fern\'{a}ndez and Solodov \cite{Fernandez10} to guarantee
 the calmness of the KKT solution mapping for the canonically perturbed nonlinear
 programming, and then obtain the superlinear convergence of the stabilized
 sequential quadratic programming method (sSQP) by invoking \cite[Theorem 1]{Fischer02}.
 Later, Izmailov and Solodov \cite{Izmailov12,Izmailov13} provided some
 equivalent characterizations for the strong calmness of the KKT solution mapping
 in the setting of polyhedral conic optimization. They showed that
 this upper Lipschitz stability is not only equivalent to a local error bound
 for solutions of perturbed KKT system, but also equivalent to the noncriticality
 of the associated multiplier.
 As discussed in \cite{Wright98,Hager99,Fernandez10,Izmailov12,Izmailov13,Izmailov15},
 the strong calmness of the KKT solution map or equivalently
 the noncriticality of the associated multiplier is the key to achieve a fast
 convergence rate for the sSQP method or the augmented Lagrangian method (ALM)
 of the polyhedral conic optimization problems with degenerate solutions,
 i.e., the solutions with multiple Lagrange multipliers.
 Then, it is natural to ask whether these characterizations hold or not
 for nonpolyhedral conic optimization. If not, what conditions are enough?

 \medskip

 The main contribution of this work is to provide an affirmative answer to this question.
 Specifically, in Section \ref{sec3} we show that the strong calmness of
 $\mathcal{S}_{\rm KKT}$ at the origin for $(\overline{x},\overline{\lambda})
 \in\mathcal{S}_{\rm KKT}(0,0)$ is equivalent to the local error bound stated in
 Property \ref{Error-bound} for solutions of perturbed KKT system, and is also
 equivalent to the pseudo-isolated calmness (see Definition \ref{Pseudo-icalm})
 of the stationary point map $\mathcal{X}_{\rm KKT}$ at the origin for $\overline{x}$
 together with the calmness of the multiplier set map $\mathcal{M}$ at $(\overline{x},0,0)$
 for $\overline{\lambda}$. Among others, the calmness of $\mathcal{M}$ at $(\overline{x},0,0)$
 for $\overline{\lambda}$ is very weak and holds automatically in the polyhedral setting.
 However, unlike the polyhedral case, the noncriticality of the Lagrange multiplier
 $\overline{\lambda}$ is only necessary but not sufficient for the pseudo-isolated
 calmness of $\mathcal{X}_{\rm KKT}$. In Section \ref{sec4}, we show that,
 under some restrictions on the mapping $\nabla g(\overline{x})\!:\mathbb{X}\to\mathbb{Y}$,
 the noncriticality of the multiplier $\overline{\lambda}$ can
 guarantee the pseudo-isolated calmness of $\mathcal{X}_{\rm KKT}$,
 and consequently, some sufficient characterizations for the strong calmness
 of $\mathcal{S}_{\rm KKT}$ are obtained.

 \medskip

 Notice that for structured convex semidefinite programming with
 multiple solutions (actually degenerate solutions), Cui, Sun and Toh
 \cite{Cui16} have studied the calmness of the stationary point mapping
 and the strong calmness of the perturbed KKT system so as to
 capture the fast convergence of the ALM for such problems.
 Recently, for general nonlinear semidefinite programming,
 Zhang and Zhang \cite{Zhang17} established the strong calmness
 of $\mathcal{S}_{\rm KKT}$ under the noncriticality of the Lagrange
 multiplier along with some additional conditions, which improves
 the result in \cite{Cui16} for the perturbed KKT system since
 the noncriticality of the multiplier is much weaker than
 the second-order sufficient condition in \cite{Cui16} associated to
 a specified multiplier. This paper is to a great extent motivated by
 their works. We extend their results to a class of conic programming
 and remove the condition (i) of \cite[Theorem 3.3]{Zhang17}
 under the relative interior condition of multiplier set.

 \medskip

 In the rest of this paper, for a finite dimensional vector space $\mathbb{Z}$
 equipped with a norm $\|\cdot\|$, $\mathbb{B}_{\mathbb{Z}}$ denotes the closed
 unit ball centered at the origin in $\mathbb{Z}$ and for a given $z\in\mathbb{Z}$,
 $\mathbb{B}_{\delta}(z)$ means the closed ball of radius $\delta$ centered at $z$.
 For a given closed set $\Omega\subseteq\mathbb{Z}$,
 $\Pi_{\Omega}(\cdot)$ denotes the projection operator onto $\Omega$,
 and ${\rm dist}(x,\Omega):=\inf_{z\in\Omega}\|z-x\|$ for a given $x\in\mathbb{Z}$
 means the distance of $x$ from the set $\Omega$; and for a given
 nonempty convex cone $K\subseteq\mathbb{Y}$, $K^{\circ}$ denotes
 the negative polar of $K$. For a linear map $\mathcal{A}$,
 $\mathcal{A}^*$ denotes the adjoint of $\mathcal{A}$.
 \section{Preliminaries}\label{sec2}

 Throughout this section, $\mathbb{Z}$ and $\mathbb{W}$ denote two finite
 dimensional vector spaces equipped with the inner product $\langle\cdot,\cdot\rangle$
 and its induced norm $\|\cdot\|$. We first recall some background knowledge
 from the excellent monographs \cite{RW98,BS00,KK02,Mordu06,DR09}.
 Let $\Omega\subseteq\mathbb{Z}$ be a nonempty set. Fix an arbitrary
 $\overline{z}\in\Omega$. The contingent cone to $\Omega$ at $\overline{z}$ is defined as
 \[
   \mathcal{T}_{\Omega}(\overline{z}):=\big\{w\in\mathbb{Z}\ |\ \exists t_k\downarrow 0,\,
   w^k\to w\ \ {\rm as}\ k\to\infty\ {\rm with}\ \overline{x}+t_kw_k\in\Omega\big\},
 \]
 while the basic/limiting normal cone to $\Omega$ at $\overline{z}$ admits the following
 representation
 \[
   \mathcal{N}_\Omega(\overline{z})=\limsup_{z\xrightarrow[\Omega]{}\overline{z}}\widehat{\mathcal{N}}_{\Omega}(z)
   \ \ {\rm with}\ \
   \widehat{\mathcal{N}}_{\Omega}(z)
   :=\Big\{v\in\mathbb{Z}\ |\ \limsup_{z\xrightarrow[\Omega]{}\overline{z}}
   \frac{\langle v,z-\overline{z}\rangle}{\|z-\overline{z}\|}\le 0\Big\}.
 \]
 When $\Omega$ is locally closed around $\overline{z}\in\Omega$, this is equivalent to
 the original definition by Mordukhovich \cite{Mordu76}, i.e.,
 \(
   \mathcal{N}_\Omega(\overline{z}):=\limsup_{z\to\overline{z}}\big[{\rm cone}(z-\Pi_{\Omega}(z))\big],
 \)
 and when $\Omega$ is convex, $\mathcal{N}_{\Omega}(\overline{z})$ becomes
 the normal cone in the sense of convex analysis \cite{Roc70}.

 \medskip

 Let $\mathcal{F}\!:\mathbb{Z}\rightrightarrows\mathbb{W}$ be a given multifunction.
 Consider an arbitrary $(\overline{z},\overline{w})\in{\rm gph}\mathcal{F}$
 such that $\mathcal{F}$ is locally closed at $(\overline{z},\overline{w})$,
 where ${\rm gph}\mathcal{F}$ denotes the graph of $\mathcal{F}$.
 We recall from \cite{RW98,DR09} the concepts of metric subregularity
  and calmness of the multifunction $\mathcal{F}$.
  \begin{definition}\label{subregular}
   The multifunction $\mathcal{F}\!:\mathbb{Z}\rightrightarrows\mathbb{W}$ is said to
   be metrically subregular at $\overline{z}$  for $\overline{w}\in\mathcal{F}(\overline{z})$
   if there exists $\kappa>0$ along with $\varepsilon>0$ and $\delta>0$ such that
   for all $z\in\mathbb{B}_{\varepsilon}(\overline{z})$,
   \[
     {\rm dist}\big(z,\mathcal{F}^{-1}(\overline{w})\big)
     \le \kappa\,{\rm dist}\big(\overline{w},\mathcal{F}(z)\cap\mathbb{B}_{\delta}(\overline{w})\big).
   \]
  \end{definition}
 \begin{definition}\label{calm-def}
  The multifunction $\mathcal{F}$ is said to be calm at $\overline{z}$ for $\overline{w}$
  if there exists $\kappa>0$ along with $\varepsilon>0$ and $\delta>0$ such that for all
  $z\in\mathbb{B}_{\varepsilon}(\overline{z})$,
  \begin{equation}\label{calm-inclusion1}
    \mathcal{F}(z)\cap\mathbb{B}_{\delta}(\overline{w})
    \subseteq\mathcal{F}(\overline{z})+\kappa\|z-\overline{z}\|\mathbb{B}_{\mathbb{W}}.
  \end{equation}
  If in addition $\mathcal{F}(\overline{z})=\{\overline{w}\}$, the multifunction
  $\mathcal{F}$ is said to be isolated calm at $\overline{z}$ for $\overline{w}$.
 \end{definition}

 By \cite[Exercise 3H.4]{DR09}, the neighborhood $\mathbb{B}_{\delta}(\overline{w})$ in
 Definition \ref{subregular} and the restriction of $z\in\mathbb{B}_{\varepsilon}(\overline{z})$
 in Definition \ref{calm-def} can be removed. The graphical derivative of $\mathcal{F}$
 is a convenient tool to study the isolated calmness of $\mathcal{F}$.
 Recall from \cite{Aubin81} the graphical derivative of $\mathcal{F}$ at $(\overline{z},\overline{w})$
 is the mapping $D\mathcal{F}(\overline{z}|\overline{w})\!:\mathbb{Z}\rightrightarrows\mathbb{W}$
 defined by $\Delta w\in D\mathcal{F}(\overline{z}|\overline{w})(\Delta z)$
 if and only if $(\Delta z,\Delta w)\in\mathcal{T}_{{\rm gph}\mathcal{F}}(\overline{z},\overline{w})$.
 With the graphical derivative of $\mathcal{F}$, the following result holds.
  \begin{lemma}\label{chara-icalm}(see \cite[Proposition 2.1]{KR92}
  or \cite[Proposition 4.1]{Levy96})\ The multifunction $\mathcal{F}$
  is isolated calm at $\overline{z}$ for $\overline{w}$
  if and only if $D\mathcal{F}(\overline{z}|\overline{w})(0)=\{0\}$.
 \end{lemma}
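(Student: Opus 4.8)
The plan is to prove the stated equivalence by treating the two implications separately, in each case exploiting the defining relation $\Delta w\in D\mathcal{F}(\overline{z}|\overline{w})(\Delta z)\iff(\Delta z,\Delta w)\in\mathcal{T}_{{\rm gph}\mathcal{F}}(\overline{z},\overline{w})$ together with the sequential description of the contingent cone recalled above.

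For the implication ``isolated calmness $\Rightarrow D\mathcal{F}(\overline{z}|\overline{w})(0)=\{0\}$'' I would argue directly. Let $\kappa,\varepsilon,\delta>0$ be the constants furnished by the isolated calmness estimate and pick any $\Delta w\in D\mathcal{F}(\overline{z}|\overline{w})(0)$, so that $(0,\Delta w)\in\mathcal{T}_{{\rm gph}\mathcal{F}}(\overline{z},\overline{w})$. By the sequential characterization there exist $t_k\downarrow 0$ and $(\Delta z_k,\Delta w_k)\to(0,\Delta w)$ with $\overline{w}+t_k\Delta w_k\in\mathcal{F}(\overline{z}+t_k\Delta z_k)$. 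For all large $k$ both $\overline{z}+t_k\Delta z_k$ and $\overline{w}+t_k\Delta w_k$ lie in the relevant balls, so the estimate gives $t_k\|\Delta w_k\|\le\kappa\,t_k\|\Delta z_k\|$; dividing by $t_k$ and letting $k\to\infty$ yields $\|\Delta w\|\le\kappa\cdot 0=0$, whence $\Delta w=0$.

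For the converse I would argue by contradiction, and this is where the real work lies. Assuming the calmness estimate fails for every choice of constants, I take $\kappa=n$ and $\varepsilon=\delta=1/n$ to produce $z_n\to\overline{z}$ and $w_n\in\mathcal{F}(z_n)$ with $w_n\to\overline{w}$ and $\|w_n-\overline{w}\|>n\|z_n-\overline{z}\|$; in particular $w_n\ne\overline{w}$. Setting $t_n:=\|(z_n-\overline{z},w_n-\overline{w})\|>0$ and normalizing, the unit vectors $(\Delta z_n,\Delta w_n):=t_n^{-1}(z_n-\overline{z},w_n-\overline{w})$ admit, by compactness of the unit sphere in the finite-dimensional space $\mathbb{Z}\times\mathbb{W}$, a subsequence converging to a unit vector $(\Delta z,\Delta w)$. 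Since $(z_n,w_n)=(\overline{z},\overline{w})+t_n(\Delta z_n,\Delta w_n)\in{\rm gph}\mathcal{F}$ with $t_n\downarrow 0$, the limit obeys $(\Delta z,\Delta w)\in\mathcal{T}_{{\rm gph}\mathcal{F}}(\overline{z},\overline{w})$.

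It then remains to show that this tangent direction has vanishing $\mathbb{Z}$-component, which is exactly where the violated inequality enters: from $\|w_n-\overline{w}\|>n\|z_n-\overline{z}\|$ and $\|\Delta w_n\|\le 1$ one gets $\|\Delta z_n\|<n^{-1}\|\Delta w_n\|\le n^{-1}\to 0$, so $\Delta z=0$ and hence $\|\Delta w\|=1$. Thus $\Delta w$ is a nonzero element of $D\mathcal{F}(\overline{z}|\overline{w})(0)$, contradicting the hypothesis. The same scheme specialized to $z_n\equiv\overline{z}$ simultaneously forces the local single-valuedness $\mathcal{F}(\overline{z})\cap\mathbb{B}_{\delta}(\overline{w})=\{\overline{w}\}$ required by the definition. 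I expect the main obstacle to be this normalization-and-compactness step: one must ensure $t_n>0$ so the directions are well defined, and argue that the extracted limit is a genuine unit vector whose $\mathbb{Z}$-component provably vanishes, the finite dimensionality of $\mathbb{Z}\times\mathbb{W}$ (with local closedness of ${\rm gph}\mathcal{F}$ available as a standing hypothesis) being what makes the subsequential limit legitimate.
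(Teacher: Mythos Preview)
Your proof is correct and follows the standard argument found in the cited references. Note, however, that the paper itself does not supply a proof of this lemma: it is quoted directly from \cite[Proposition 2.1]{KR92} and \cite[Proposition 4.1]{Levy96}, so there is no in-paper argument to compare against. Your treatment of both implications---the direct estimate for the forward direction and the normalization-plus-compactness contradiction for the converse---is exactly the classical route taken in those sources, and your care in verifying $t_n>0$ and extracting a unit limit with vanishing $\mathbb{Z}$-component is the right place to focus. One minor remark: the paper's Definition~\ref{calm-def} phrases isolated calmness as calmness together with $\mathcal{F}(\overline{z})=\{\overline{w}\}$, which read literally is a global condition that the graphical-derivative criterion cannot detect; the equivalence in Lemma~\ref{chara-icalm} is of course meant (and stated in the cited references) with the local version $\mathcal{F}(\overline{z})\cap\mathbb{B}_{\delta}(\overline{w})=\{\overline{w}\}$, which is precisely what your final sentence establishes.
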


  Let $\overline{x}$ be a feasible point of the problem \eqref{prob} with $(a,b)\!=\!(0,0)$.
  The critical cone of \eqref{prob} with $(a,b)\!=\!(0,0)$ at $\overline{x}$ is defined as
  \(
    \mathcal{C}(\overline{x}):=\big\{d\in\mathbb{X}\ |\ g'(\overline{x})d\in\mathcal{T}_{\mathcal{K}}(g(\overline{x})),
    \langle \nabla\!f(\overline{x}),d\rangle\le 0\big\}.
  \)
  When $\overline{x}$ is a stationary point and $\overline{\lambda}\in\mathcal{M}(\overline{x},0,0)$,
  the critical cone can be rewritten as
  \[
    \mathcal{C}(\overline{x})=\big\{d\in\mathbb{X}\ |\ g'(\overline{x})d\in\mathcal{C}_{\mathcal{K}}(g(\overline{x}),\overline{\lambda})\big\}
  \]
 where, for any $y\in\mathcal{K}$,
 \(
   \mathcal{C}_{\mathcal{K}}(y,u)
 \)
 is the critical cone of $\mathcal{K}$ at $y$ w.r.t. $u\in\mathcal{N}_{\mathcal{K}}(y)$, defined by
 \[
    \mathcal{C}_{\mathcal{K}}(y,u):=\mathcal{T}_{\mathcal{K}}(y)\cap u^{\perp}.
 \]
 Motivated by \cite[Definition 3.2]{Zhang17}, we introduce
 the concept of noncritical multipliers.
 \begin{definition}\label{Def-noncritical}
  Let $\overline{x}$ be a stationary point of \eqref{prob} with
  $(a,b)=(0,0)$ and $\overline{\lambda}\in\mathcal{M}(\overline{x},0,0)$.
  The Lagrange multiplier $\overline{\lambda}$ is said to be noncritical
  if the following generalized equation
  \[
    0\in\nabla_{xx}^2L(\overline{x},\overline{\lambda})\xi
    +\nabla g(\overline{x})D\mathcal{N}_{\mathcal{K}}(g(\overline{x})|\overline{\lambda}))(g'(\overline{x})\xi)
  \]
  has the unique trivial solution $\xi=0$, and otherwise $\overline{\lambda}$
  is said to be critical.
 \end{definition}

  Now let us recall the definition of the $C^{2}$-cone reducibility for a closed convex set.
 \begin{definition}\label{cone-reduce}(\cite[Definition 3.135]{BS00})
   A closed convex set $\Omega$ in $\mathbb{Y}$ is said to be $C^{2}$-cone
   reducible at $\overline{y}\in\Omega$, if there exist an open neighborhood
   $\mathcal{Y}$ of $\overline{y}$, a pointed closed convex cone $D\subseteq\mathbb{Z}$
   and a twice continuously differentiable mapping $\Xi\!: \mathcal{Y}\to\mathbb{Z}$
   such that (i) $\Xi(\overline{y})=0$; (ii) $\Xi'(\overline{y})\!:\mathbb{Y}\to\mathbb{Z}$
   is onto; (iii) $\Omega\cap\mathcal{Y}=\{y\in\mathcal{Y}\ |\  \Xi(y)\in\!D\}$.
   We say that the closed convex set $\Omega$ is $C^2$-cone reducible
   if $\Omega$ is $C^2$-cone reducible at every $y\in \Omega$.
 \end{definition}
  Since $\mathcal{K}$ is assumed to be $C^{2}$-cone reducible in this paper,
  by \cite[Proposition 3.136]{BS00} the set $\mathcal{K}$ is second-order regular
  at each $y\in\mathcal{K}$, and hence $\mathcal{T}_{\mathcal{K}}^{i,2}(y,h)=
  \mathcal{T}_{\mathcal{K}}^{2}(y,h)$ at each $y\in\mathcal{K}$ for any $h\in\mathbb{Y}$,
  where $\mathcal{T}_{\mathcal{K}}^{i,2}(y,h)$ and $\mathcal{T}_{\mathcal{K}}^{2}(y,h)$
  denotes the inner and outer second order tangent sets to $\mathcal{K}$ at $y$
  in the direction $h\in\mathbb{Y}$. From the standard reduction approach,
  we have the following result on the representation of the normal cone of
  $\mathcal{K}$ and the ``sigma term'' of the $C^2$-cone reducible set
  $\mathcal{K}$ (see \cite[Equation(3.266)\&(3.274)]{BS00}).
  \begin{lemma}\label{lemma-reduction}
   Let $\overline{y}\in \mathcal{K}$ be given. There exist an open neighborhood
   $\mathcal{Y}$ of $\overline{y}$, a pointed closed convex cone $D\subseteq\mathbb{Z}$,
   and a twice continuously differentiable mapping $\Xi\!:\mathcal{Y}\to\mathbb{Z}$
   satisfying conditions (i)-(iii) in Definition \ref{cone-reduce} such that
   for all $y\in\mathcal{Y}$,
   \[
     \mathcal{N}_{\mathcal{K}}(y)=\nabla\Xi(y)\mathcal{N}_{D}(\Xi(y)).
   \]
   Also, for any $\overline{\lambda}\in\mathcal{N}_{\mathcal{K}}(\overline{y})$,
   there exists a unique $\overline{\mu}\in\!\mathcal{N}_{D}(\Xi(\overline{y}))$ such that
   $\overline{\lambda}=\nabla\Xi(\overline{y})\overline{\mu}$, and
   \begin{equation}\label{Upsilon}
     \Upsilon(h):=-\sigma\big(\overline{\lambda},\mathcal{T}_{\mathcal{K}}^2(\overline{y},h)\big)
     =\langle \overline{\mu},\Xi''(\overline{y})(h,h)\rangle
     \quad \forall h\in\mathcal{C}_{\mathcal{K}}(\overline{y},\overline{\lambda}).
   \end{equation}
  \end{lemma}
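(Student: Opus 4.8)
The plan is to extract the reduction data from the $C^2$-cone reducibility of $\mathcal{K}$ and then push the normal-cone and second-order analysis through the appropriate chain rules. First I would apply Definition \ref{cone-reduce} at $\overline{y}$ to obtain the neighborhood $\mathcal{Y}$, the pointed closed convex cone $D$, and the twice continuously differentiable map $\Xi$ with $\Xi(\overline{y})=0$, $\Xi'(\overline{y})$ onto, and $\mathcal{K}\cap\mathcal{Y}=\{y\in\mathcal{Y}\mid\Xi(y)\in D\}$. Since surjectivity of $\Xi'(\cdot)$ is an open condition, after shrinking $\mathcal{Y}$ I may assume $\Xi'(y)$ is onto for every $y\in\mathcal{Y}$; this is exactly Robinson's constraint qualification for the constraint system $\Xi(y)\in D$, so the standard normal-cone chain rule for $\Xi^{-1}(D)$ applies and yields, for each such $y$,
\[
  \mathcal{N}_{\mathcal{K}}(y)=\mathcal{N}_{\Xi^{-1}(D)}(y)=\Xi'(y)^*\mathcal{N}_D(\Xi(y))=\nabla\Xi(y)\mathcal{N}_D(\Xi(y)),
\]
where the first equality uses that the normal cone is a local object together with $\mathcal{K}\cap\mathcal{Y}=\Xi^{-1}(D)\cap\mathcal{Y}$.

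For the second assertion, fix $\overline{\lambda}\in\mathcal{N}_{\mathcal{K}}(\overline{y})$. Existence of $\overline{\mu}\in\mathcal{N}_D(\Xi(\overline{y}))$ with $\overline{\lambda}=\nabla\Xi(\overline{y})\overline{\mu}$ is immediate from the displayed formula evaluated at $y=\overline{y}$. Uniqueness follows because $\Xi'(\overline{y})$ is onto, so its adjoint $\nabla\Xi(\overline{y})=\Xi'(\overline{y})^*$ is injective, and hence at most one $\overline{\mu}$ can satisfy $\overline{\lambda}=\nabla\Xi(\overline{y})\overline{\mu}$.

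It remains to verify the ``sigma term'' identity \eqref{Upsilon}. Here I would invoke the second-order tangent set chain rule, valid under the surjectivity of $\Xi'(\overline{y})$, namely
\[
  \mathcal{T}_{\mathcal{K}}^2(\overline{y},h)=\big\{w\in\mathbb{Y}\mid\Xi'(\overline{y})w+\Xi''(\overline{y})(h,h)\in\mathcal{T}_D^2(\Xi(\overline{y}),\Xi'(\overline{y})h)\big\}.
\]
Passing to support functions and using $\overline{\lambda}=\Xi'(\overline{y})^*\overline{\mu}$ together with the surjectivity of $\Xi'(\overline{y})$ to transfer the supremum onto $\mathbb{Z}$, I obtain
\[
  \sigma\big(\overline{\lambda},\mathcal{T}_{\mathcal{K}}^2(\overline{y},h)\big)=\sigma\big(\overline{\mu},\mathcal{T}_D^2(0,\Xi'(\overline{y})h)\big)-\langle\overline{\mu},\Xi''(\overline{y})(h,h)\rangle.
\]
The decisive point is that the first term on the right vanishes for every $h\in\mathcal{C}_{\mathcal{K}}(\overline{y},\overline{\lambda})$. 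I would prove this by exploiting that $D$ is a cone and $\Xi(\overline{y})=0$: positive homogeneity of the distance to a cone gives ${\rm dist}(td+\tfrac12 t^2 w,D)=t\,{\rm dist}(d+\tfrac12 t w,D)$ for $d:=\Xi'(\overline{y})h\in D$, and letting $t\downarrow 0$ identifies $\mathcal{T}_D^2(0,d)$ with the ordinary tangent cone $\mathcal{T}_D(d)$. Since $\overline{\mu}\in D^{\circ}=\mathcal{N}_D(0)$ and $h$ critical forces $\langle\overline{\mu},d\rangle=0$, one checks directly that $\overline{\mu}\in\mathcal{N}_D(d)=(\mathcal{T}_D(d))^{\circ}$, whence $\sigma(\overline{\mu},\mathcal{T}_D(d))=0$. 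Combining the last three displays gives $\Upsilon(h)=-\sigma(\overline{\lambda},\mathcal{T}_{\mathcal{K}}^2(\overline{y},h))=\langle\overline{\mu},\Xi''(\overline{y})(h,h)\rangle$, as claimed.

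The main obstacle I anticipate is the rigorous justification of the two chain rules under mere surjectivity of $\Xi'$, and in particular the second-order tangent set formula, which requires that the reduction preserve the second-order geometry and is precisely where the $C^2$ (rather than merely $C^1$) regularity of $\Xi$ enters. Once that formula is in hand, the cone computation collapsing the $D$-contribution of the sigma term to zero is the clean, self-contained heart of the argument.
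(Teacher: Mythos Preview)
Your proposal is correct. Note, however, that the paper does not supply its own proof of this lemma: it simply states the result and points to \cite[Equation~(3.266) \& (3.274)]{BS00} as the source, calling it ``the standard reduction approach.'' What you have written is precisely a self-contained account of that standard argument---the normal-cone chain rule under surjectivity of $\Xi'(y)$, injectivity of the adjoint for uniqueness of $\overline{\mu}$, the second-order tangent chain rule, and the cone computation $\mathcal{T}_D^2(0,d)=\mathcal{T}_D(d)$ together with $\overline{\mu}\in\mathcal{N}_D(d)$ to kill the $D$-contribution of the sigma term. So there is no meaningful comparison to make: you have filled in the details the paper chose to outsource, and your route matches the one in the cited reference.
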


  Next we recall a useful result on the directional derivative of the projection
  operator $\Pi_{\mathcal{K}}$. Fix an arbitrary $y\in\mathbb{Y}$.
  Write $\overline{y}=\Pi_{\mathcal{K}}(y)$ and let
  $\overline{\lambda}\in\mathcal{N}_{\mathcal{K}}(\overline{y})$.
  Since $\mathcal{K}$ is $C^{2}$-cone reducible, by \cite[Theorem 7.2]{BCS99} the mapping
  $\Pi_{\mathcal{K}}$ is directionally differentiable at $y$ and the directional derivative
  $\Pi_{\mathcal{K}}'(y;h)$ for any direction $h\in\mathbb{Y}$ satisfies
  \[
    \Pi_{\mathcal{K}}'(y;h)=\mathop{\arg\min}_{d\in\mathbb{Y}}
    \big\{\|d-h\|^2-\sigma\big(\overline{\lambda},\mathcal{T}_{\mathcal{K}}^2(\overline{y},d)\big)
    \!: d\in\mathcal{C}_{\mathcal{K}}(\overline{y},\overline{\lambda})\big\}.
  \]
  In addition, by following the arguments as those for \cite[Theorem 3.1]{WZZhang14},
  one can obtain
  \[
    \mathcal{T}_{{\rm gph}\mathcal{N}_{\mathcal{K}}}(z,w)=
    \big\{(\Delta z,\Delta w)\in\mathbb{Y}\times\mathbb{Y}\ |\
     \Pi_{\mathcal{K}}'(z+w;\Delta z+\Delta w)=\Delta z\big\}.
  \]
  Combining this with \cite[Lemma 10]{DingSZ17}, we have the following conclusion
  for the graphical derivative of $\mathcal{N}_{\mathcal{K}}$,
  the directional derivative of $\Pi_{\mathcal{K}}$ and the critical cone of
  the set $\mathcal{K}$.
  \begin{lemma}\label{dir-proj}
   Let $z\in\!\mathbb{Y}$ be a given vector. Write $\overline{z}:=\Pi_{\mathcal{K}}(z)$
   and $\overline{\mu}:=z-\overline{z}$. Then,
   with $\Upsilon(\cdot)=-\sigma\big(\overline{\mu},\mathcal{T}_{\mathcal{K}}^2(y,\cdot)\big)
   =\langle u,\Xi''(\overline{z})(\cdot,\cdot)\rangle$ for $u\in\mathcal{N}_D(\Xi(\overline{z}))$,
   it holds that
   \begin{align}\label{system-regular}
    \Delta\lambda\in D\mathcal{N}_{\mathcal{K}}(\overline{z}|\overline{\mu})(\Delta y)
    &\Longleftrightarrow \Delta y-\Pi_{\mathcal{K}}'(z;\Delta y\!+\!\Delta\lambda)=0\nonumber\\
    &\Longleftrightarrow
    \left\{\begin{array}{ll}
      \Delta y\in\mathcal{C}_{\mathcal{K}}(\overline{z},\overline{\mu}),\\
      \Delta\lambda-\frac{1}{2}\nabla\Upsilon(\Delta y)\in[\mathcal{C}_{\mathcal{K}}(\overline{z},\overline{\mu})]^{\circ},\\
      \langle\Delta y,\Delta\lambda\rangle
     =-\sigma(\overline{\mu},\mathcal{T}_{\mathcal{K}}^2(\overline{z},\Delta y)).
     \end{array}\right.
   \end{align}
 \end{lemma}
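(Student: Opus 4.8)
The plan is to prove the two equivalences in \eqref{system-regular} in turn, leaning on the tangent-cone description of ${\rm gph}\,\mathcal{N}_{\mathcal{K}}$ and the $\arg\min$ formula for $\Pi_{\mathcal{K}}'$ recorded just above the statement.

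\textbf{The first equivalence} is essentially a reformulation of the definition of the graphical derivative. Since $\overline{z}=\Pi_{\mathcal{K}}(z)$ and $\overline{\mu}=z-\overline{z}$, the projection characterization yields $\overline{\mu}\in\mathcal{N}_{\mathcal{K}}(\overline{z})$, whence $(\overline{z},\overline{\mu})\in{\rm gph}\,\mathcal{N}_{\mathcal{K}}$ and $\overline{z}+\overline{\mu}=z$. By definition $\Delta\lambda\in D\mathcal{N}_{\mathcal{K}}(\overline{z}|\overline{\mu})(\Delta y)$ holds iff $(\Delta y,\Delta\lambda)\in\mathcal{T}_{{\rm gph}\mathcal{N}_{\mathcal{K}}}(\overline{z},\overline{\mu})$. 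I would substitute the base point $(\overline{z},\overline{\mu})$ into the tangent-cone formula displayed above (obtained by mimicking \cite[Theorem 3.1]{WZZhang14}) and use $\overline{z}+\overline{\mu}=z$ to rewrite the membership as $\Pi_{\mathcal{K}}'(z;\Delta y+\Delta\lambda)=\Delta y$, which is exactly $\Delta y-\Pi_{\mathcal{K}}'(z;\Delta y+\Delta\lambda)=0$.

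\textbf{For the second equivalence} I would insert the $\arg\min$ description of $\Pi_{\mathcal{K}}'$ with reference data $\overline{y}=\overline{z}$, $\overline{\lambda}=\overline{\mu}=z-\overline{z}$ and direction $h=\Delta y+\Delta\lambda$. Writing $C:=\mathcal{C}_{\mathcal{K}}(\overline{z},\overline{\mu})$ and recalling $\Upsilon(d)=-\sigma(\overline{\mu},\mathcal{T}_{\mathcal{K}}^2(\overline{z},d))$, the identity $\Pi_{\mathcal{K}}'(z;\Delta y+\Delta\lambda)=\Delta y$ says precisely that $\Delta y$ minimizes $\phi(d):=\|d-(\Delta y+\Delta\lambda)\|^2+\Upsilon(d)$ over $d\in C$. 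Feasibility of the minimizer gives the first condition $\Delta y\in C$. The first-order optimality condition for this program over the convex cone $C$ reads $-\nabla\phi(\Delta y)\in\mathcal{N}_C(\Delta y)=C^{\circ}\cap(\Delta y)^{\perp}$; since $\nabla\phi(\Delta y)=-2\Delta\lambda+\nabla\Upsilon(\Delta y)$, the inclusion into the cone $C^{\circ}$ (so that scaling by $\tfrac{1}{2}$ is harmless) yields the second condition $\Delta\lambda-\frac{1}{2}\nabla\Upsilon(\Delta y)\in C^{\circ}$. Finally, the orthogonality part $(\Delta y)^{\perp}$ gives $\langle\Delta\lambda-\frac{1}{2}\nabla\Upsilon(\Delta y),\Delta y\rangle=0$; because $\Upsilon$ is the quadratic form $\langle u,\Xi''(\overline{z})(\cdot,\cdot)\rangle$ of Lemma \ref{lemma-reduction}, its homogeneity gives $\langle\nabla\Upsilon(\Delta y),\Delta y\rangle=2\Upsilon(\Delta y)$, and on $C$ one has $\Upsilon(\Delta y)=-\sigma(\overline{\mu},\mathcal{T}_{\mathcal{K}}^2(\overline{z},\Delta y))$, producing the third condition $\langle\Delta y,\Delta\lambda\rangle=-\sigma(\overline{\mu},\mathcal{T}_{\mathcal{K}}^2(\overline{z},\Delta y))$.

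\textbf{The main obstacle} I anticipate is justifying that these stationarity relations are not merely necessary but also sufficient, i.e.\ that conversely the three conditions force $\Delta y$ to be the (unique) minimizer defining $\Pi_{\mathcal{K}}'(z;\Delta y+\Delta\lambda)$. This hinges on the reduced problem over $C$ being convex and well posed, which is where $C^2$-cone reducibility enters: by Lemma \ref{lemma-reduction} the sigma term is the genuine quadratic form $\langle u,\Xi''(\overline{z})(\cdot,\cdot)\rangle$, so $\phi$ restricted to $C$ is a quadratic whose minimizer is pinned down exactly by the variational inequality. Packaging this convexity/uniqueness argument cleanly is precisely the content I would borrow from \cite[Lemma 10]{DingSZ17}; the remaining care is only in verifying $\mathcal{N}_C(\Delta y)=C^{\circ}\cap(\Delta y)^{\perp}$ for the closed convex cone $C$ and in tracking the factor $\frac{1}{2}$ through the gradient of $\Upsilon$.
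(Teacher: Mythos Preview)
Your proposal is correct and follows essentially the same route as the paper: the first equivalence is read off from the tangent-cone description of ${\rm gph}\,\mathcal{N}_{\mathcal{K}}$ displayed just before the lemma, and the second equivalence is exactly the content of \cite[Lemma 10]{DingSZ17}, which the paper simply cites rather than re-proves. Your added value is the explicit derivation of the necessary direction via the KKT conditions of the $\arg\min$ problem, which the paper leaves entirely inside the citation.

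One caution on your heuristic for the converse: the phrase ``$\phi$ restricted to $C$ is a quadratic whose minimizer is pinned down exactly by the variational inequality'' tacitly assumes convexity of $\phi$ on $C$, but the curvature term $\Upsilon(d)=\langle u,\Xi''(\overline{z})(d,d)\rangle$ need not be positive semidefinite in general, so $\phi$ can fail to be convex. The sufficiency in \cite[Lemma 10]{DingSZ17} does not rest on convexity of $\phi$ per se; it uses the specific structure of the reduced problem (in particular that $\Pi_{\mathcal{K}}'(z;\cdot)$ is already known to be single-valued and to solve the displayed program, so one argues that any $(\Delta y,\Delta\lambda)$ satisfying the three conditions reproduces the unique solution). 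Since you explicitly defer the converse to that lemma, this does not create a gap in your argument, but your informal justification of \emph{why} the lemma works should be adjusted.
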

  By using Lemma \ref{dir-proj} and Definition \ref{Def-noncritical},
  it is immediate to obtain the following result.
  \begin{proposition}\label{noncritical-prop}
   Let $(\overline{x},\overline{\lambda})$ be a KKT point of the problem \eqref{prob}
   with $(a,b)=(0,0)$. Then $\overline{\lambda}$ is noncritical iff
   all solutions of the following system have the form of $(0,*)\in\mathbb{X}\times\mathbb{Y}$:
  \begin{equation}\label{noncritical-system}
   \left\{\begin{array}{l}
    \nabla_{xx}^2L(\overline{x},\overline{\lambda})\xi+\nabla g(\overline{x})v=0,\\
    g'(\overline{x})\xi-\Pi_{\mathcal{K}}'(g(\overline{x})\!+\!\overline{\lambda};g'(\overline{x})\xi+v)=0.
    \end{array}\right.
   \end{equation}
  \end{proposition}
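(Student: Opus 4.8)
The plan is to unravel Definition \ref{Def-noncritical} by rewriting the graphical-derivative inclusion appearing there as the explicit projection-derivative equation furnished by Lemma \ref{dir-proj}. First I would set up the correct reference point for applying that lemma. Taking $z:=g(\overline{x})+\overline{\lambda}$, the second KKT equation in \eqref{KKT} with $(a,b)=(0,0)$ gives $g(\overline{x})=\Pi_{\mathcal{K}}(g(\overline{x})+\overline{\lambda})$, so that $\overline{z}:=\Pi_{\mathcal{K}}(z)=g(\overline{x})$ and $\overline{\mu}:=z-\overline{z}=\overline{\lambda}$. Thus the graphical derivative $D\mathcal{N}_{\mathcal{K}}(g(\overline{x})|\overline{\lambda})$ appearing in Definition \ref{Def-noncritical} is precisely the object $D\mathcal{N}_{\mathcal{K}}(\overline{z}|\overline{\mu})$ treated in Lemma \ref{dir-proj}.

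Next, I would invoke the first equivalence of \eqref{system-regular} with $\Delta y=g'(\overline{x})\xi$ and $\Delta\lambda=v$. This yields that, for any $\xi\in\mathbb{X}$ and $v\in\mathbb{Y}$,
\[
  v\in D\mathcal{N}_{\mathcal{K}}(g(\overline{x})|\overline{\lambda})(g'(\overline{x})\xi)
  \ \Longleftrightarrow\
  g'(\overline{x})\xi-\Pi_{\mathcal{K}}'\big(g(\overline{x})+\overline{\lambda};\,g'(\overline{x})\xi+v\big)=0,
\]
which is exactly the second relation in \eqref{noncritical-system}. Unwinding the set-valued inclusion in Definition \ref{Def-noncritical}, the generalized equation there holds for a given $\xi$ if and only if there is some $v$ with $\nabla_{xx}^2L(\overline{x},\overline{\lambda})\xi+\nabla g(\overline{x})v=0$ and $v\in D\mathcal{N}_{\mathcal{K}}(g(\overline{x})|\overline{\lambda})(g'(\overline{x})\xi)$; by the displayed equivalence and the fact that the first equation there coincides verbatim with the first relation of \eqref{noncritical-system}, this is the same as the pair $(\xi,v)$ solving the full system \eqref{noncritical-system}.

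Putting these together, the requirement that the generalized equation admit only the trivial solution $\xi=0$ is equivalent to the requirement that every solution $(\xi,v)$ of \eqref{noncritical-system} satisfy $\xi=0$, i.e.\ has the form $(0,*)$. This gives the claimed equivalence. I expect the only genuinely delicate point to be the reference-point identification $\overline{z}=g(\overline{x})$, $\overline{\mu}=\overline{\lambda}$, which rests on the KKT feasibility of $(\overline{x},\overline{\lambda})$; once this is in place, the statement follows immediately from Lemma \ref{dir-proj} with no further computation.
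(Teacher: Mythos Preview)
Your proposal is correct and follows exactly the approach indicated in the paper, which simply states that the result is immediate from Lemma~\ref{dir-proj} and Definition~\ref{Def-noncritical}. Your identification of the reference point $z=g(\overline{x})+\overline{\lambda}$ with $\overline{z}=g(\overline{x})$ and $\overline{\mu}=\overline{\lambda}$ via the KKT condition, followed by the substitution $\Delta y=g'(\overline{x})\xi$, $\Delta\lambda=v$ in the first equivalence of \eqref{system-regular}, is precisely the computation the paper is alluding to.
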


  To close this section, we study the calmness of a multifunction
  related to the perturbed KKT system. For a given $x\in\mathbb{X}$,
  we define the multifunction $\mathcal{G}_x\!:\mathbb{X}\times\mathbb{Y}
  \rightrightarrows\mathbb{Y}$ by
  \begin{equation}\label{MGmap}
   \mathcal{G}_{x}(\eta,y):=\big\{\lambda\in\mathbb{Y}\ |\ \eta + \nabla\!g(x)\lambda=0,\,
   y\!-\!\Pi_{\mathcal{K}}(y\!+\!\lambda)=0\big\}.
  \end{equation}
  Let $(\overline{x},\overline{\lambda})$ be a KKT point of \eqref{prob}
  with $(a,b)\!=\!(0,0)$. Clearly $\mathcal{G}_{\overline{x}}(\nabla\!f(\overline{x}),g(\overline{x}))
  =\mathcal{M}(\overline{\lambda},0,0)$. The following proposition gives
  some conditions for the calmness of $\mathcal{G}_{\overline{x}}$ at
  $(\nabla\!f(\overline{x}),g(\overline{x}))$ for $\overline{\lambda}$ which,
  as will be seen in Section \ref{sec3}, are sufficient for that of
  $\mathcal{M}$ at $(\overline{x},0,0)$ for $\overline{\lambda}$.
  \begin{proposition}\label{calm-MGx}
   Let $(\overline{x},\overline{\lambda})$ be a KKT point of the problem \eqref{prob}
   with $(a,b)=(0,0)$. Write $\overline{\eta}\!:=\nabla\!f(\overline{x})$
   and $\overline{y}\!:=g(\overline{x})$. Define the multifunction
   $\mathcal{E}(\eta):=\mathcal{H}(\eta)\cap\mathcal{N}_{\mathcal{K}}(\overline{y})$
   for $\eta\in\mathbb{X}$, where
   \(
    \mathcal{H}(\eta)\!:=\big\{\lambda\in\mathbb{Y}\ |\ \eta+\nabla\!g(\overline{x})\lambda=0\big\}.
   \)
   Then the following statements hold:
   \begin{itemize}
    \item [(a)] the multifunction $\mathcal{G}_{\overline{x}}$ is calm at $(\overline{\eta},\overline{y})$
                 for $\overline{\lambda}$ provided that $\mathcal{E}$ is calm at $\overline{\eta}$ for $\overline{\lambda}$;

    \item [(b)] $\mathcal{E}$ is calm at $\overline{\eta}$ for $\overline{\lambda}$
                if there exist $\delta>0$ and $\gamma>0$ such that for any $\lambda\in\mathbb{B}_{\delta}(\overline{\lambda})$,
                 \begin{equation}\label{assump-MQ1}
                  {\rm dist}(\lambda,\mathcal{E}(\overline{\eta}))
                  \le \gamma\max\big\{{\rm dist}(\lambda,\mathcal{N}_{\mathcal{K}}(\overline{y})),
                  {\rm dist}(\lambda,\mathcal{H}(\overline{\eta}))\big\},
                 \end{equation}
               which is particularly implied by the condition that
               ${\rm ri}(\mathcal{N}_{\mathcal{K}}(\overline{y}))\cap\mathcal{H}(\overline{\eta})\ne\emptyset$.
   \end{itemize}
  \end{proposition}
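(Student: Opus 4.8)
The plan is to treat the two parts separately. Part (a) is a transfer argument that upgrades calmness of $\mathcal{E}$ (where only $\eta$ varies while the normal cone is frozen at $\overline{y}$) to calmness of $\mathcal{G}_{\overline{x}}$ (where both $\eta$ and $y$ vary), and its crux is to control the discrepancy created by letting $y$ move off $\overline{y}$. Part (b) is a bounded-linear-regularity argument relating the distance to the intersection $\mathcal{E}(\overline{\eta})=\mathcal{H}(\overline{\eta})\cap\mathcal{N}_{\mathcal{K}}(\overline{y})$ to the individual distances, plus the linear algebra needed to absorb the shift $\overline{\eta}\mapsto\eta$. Throughout I abbreviate $A:=\nabla\!g(\overline{x})$, and I record at the outset that $\mathcal{G}_{\overline{x}}(\overline{\eta},\overline{y})=\mathcal{E}(\overline{\eta})$ (since $\overline{y}-\Pi_{\mathcal{K}}(\overline{y}+\lambda)=0$ is equivalent to $\lambda\in\mathcal{N}_{\mathcal{K}}(\overline{y})$) and that $\overline{\lambda}\in\mathcal{E}(\overline{\eta})$ because $(\overline{x},\overline{\lambda})$ is a KKT point, so that ``calmness for $\overline{\lambda}$'' is meaningful for both maps. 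I will use the distance form of calmness throughout, which is admissible by \cite[Exercise 3H.4]{DR09}.

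For part (a) the key step I would isolate is a calmness estimate for the normal-cone map itself: there are $c>0$ and radii such that every $\lambda\in\mathcal{N}_{\mathcal{K}}(y)\cap\mathbb{B}_{\delta}(\overline{\lambda})$ with $y$ near $\overline{y}$ obeys ${\rm dist}(\lambda,\mathcal{N}_{\mathcal{K}}(\overline{y}))\le c\|y-\overline{y}\|$. I would derive this directly from the $C^{2}$-cone reduction of Lemma \ref{lemma-reduction}: writing $\lambda=\nabla\Xi(y)\mu$ with $\mu\in\mathcal{N}_{D}(\Xi(y))$ and using that $\Xi(y)\in D$ forces $\mathcal{N}_{D}(\Xi(y))\subseteq\mathcal{N}_{D}(0)=D^{\circ}$, the vector $\widetilde{\lambda}:=\nabla\Xi(\overline{y})\mu$ lands in $\nabla\Xi(\overline{y})D^{\circ}=\mathcal{N}_{\mathcal{K}}(\overline{y})$; since $\Xi'(\overline{y})$ is onto, $\nabla\Xi(\overline{y})$ is injective and bounded below, whence $\|\mu\|$ is controlled by $\|\lambda\|\le\|\overline{\lambda}\|+\delta$, and the $C^{2}$-smoothness of $\Xi$ yields $\|\widetilde{\lambda}-\lambda\|=\|(\nabla\Xi(y)-\nabla\Xi(\overline{y}))\mu\|\le c\|y-\overline{y}\|$. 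With this lemma, given $\lambda\in\mathcal{G}_{\overline{x}}(\eta,y)\cap\mathbb{B}_{\delta}(\overline{\lambda})$ I set $\eta':=-A\widetilde{\lambda}$, so that $\widetilde{\lambda}\in\mathcal{E}(\eta')$ with $\|\eta'-\overline{\eta}\|\le\|\eta-\overline{\eta}\|+\|A\|\,c\|y-\overline{y}\|$ (using $-A\lambda=\eta$); since $\widetilde{\lambda}$ stays near $\overline{\lambda}$, the assumed calmness of $\mathcal{E}$ produces $\lambda^{*}\in\mathcal{E}(\overline{\eta})=\mathcal{G}_{\overline{x}}(\overline{\eta},\overline{y})$ with $\|\widetilde{\lambda}-\lambda^{*}\|\le\kappa'\|\eta'-\overline{\eta}\|$. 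A triangle inequality then bounds ${\rm dist}(\lambda,\mathcal{G}_{\overline{x}}(\overline{\eta},\overline{y}))\le\|\lambda-\widetilde{\lambda}\|+\|\widetilde{\lambda}-\lambda^{*}\|$ by a constant times $\|(\eta,y)-(\overline{\eta},\overline{y})\|$, which is the claimed calmness.

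For part (b) I first show that \eqref{assump-MQ1} implies calmness of $\mathcal{E}$. Taking $\lambda\in\mathcal{E}(\eta)\cap\mathbb{B}_{\delta}(\overline{\lambda})$, the membership $\lambda\in\mathcal{N}_{\mathcal{K}}(\overline{y})$ gives ${\rm dist}(\lambda,\mathcal{N}_{\mathcal{K}}(\overline{y}))=0$, so \eqref{assump-MQ1} collapses to ${\rm dist}(\lambda,\mathcal{E}(\overline{\eta}))\le\gamma\,{\rm dist}(\lambda,\mathcal{H}(\overline{\eta}))$. Since $A\lambda=-\eta$ and $A\overline{\lambda}=-\overline{\eta}$, the residual $\overline{\eta}-\eta=A(\lambda-\overline{\lambda})$ lies in ${\rm range}(A)$, so the affine system defining $\mathcal{H}(\overline{\eta})$ is consistently solvable near $\lambda$ and the standard bound for linear systems gives ${\rm dist}(\lambda,\mathcal{H}(\overline{\eta}))\le C\|\eta-\overline{\eta}\|$, with $C$ governed by the smallest positive singular value of $A$. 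Combining, ${\rm dist}(\lambda,\mathcal{E}(\overline{\eta}))\le\gamma C\|\eta-\overline{\eta}\|$, i.e.\ the calmness of $\mathcal{E}$. Finally, the implication from ${\rm ri}(\mathcal{N}_{\mathcal{K}}(\overline{y}))\cap\mathcal{H}(\overline{\eta})\ne\emptyset$ to \eqref{assump-MQ1} is exactly the bounded linear regularity of the pair consisting of the closed convex cone $\mathcal{N}_{\mathcal{K}}(\overline{y})$ and the polyhedral affine set $\mathcal{H}(\overline{\eta})$; I would invoke the classical sufficient condition under which a relative-interior/polyhedrality qualification yields the max-type error bound on a neighbourhood of a common point.

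I expect the main obstacle to be the normal-cone calmness lemma in part (a): one must verify that the reduction representation of Lemma \ref{lemma-reduction} is valid on a full neighbourhood of $\overline{y}$, that the reduced multiplier $\mu$ is \emph{uniformly} bounded (which rests on the injectivity of $\nabla\Xi(\overline{y})$ and its persistence for $y$ near $\overline{y}$), and that the transferred vector $\widetilde{\lambda}$ truly lies in $\mathcal{N}_{\mathcal{K}}(\overline{y})$ through the inclusion $\mathcal{N}_{D}(\Xi(y))\subseteq D^{\circ}$. The remaining steps amount to triangle-inequality bookkeeping and, in part (b), a citation of the appropriate bounded-linear-regularity theorem.
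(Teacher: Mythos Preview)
Your proposal is correct; both parts go through as you describe. Part (b) follows essentially the same path as the paper: use \eqref{assump-MQ1}, kill the normal-cone term since $\lambda\in\mathcal{E}(\eta)\subseteq\mathcal{N}_{\mathcal{K}}(\overline{y})$, bound ${\rm dist}(\lambda,\mathcal{H}(\overline{\eta}))$ via a Hoffman-type estimate for the affine map $A$, and cite the bounded-linear-regularity criterion (the paper uses \cite[Corollary 3]{Bauschke99}) for the relative-interior implication.

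Part (a), however, takes a genuinely different route from the paper. The paper observes that $\mathcal{G}_{\overline{x}}(\eta,y)=\mathcal{H}(\eta)\cap\mathcal{N}_{\mathcal{K}}(y)$ and then invokes an abstract intersection-calmness result, \cite[Theorem 3.6]{KK02}, after checking four hypotheses: calmness of $\mathcal{N}_{\mathcal{K}}$ at $\overline{y}$ for $\overline{\lambda}$ (cited from \cite[Theorem 2.1]{LiuPan17}), calmness of the polyhedral map $\mathcal{H}$, the Aubin property of $\mathcal{H}^{-1}$, and the assumed calmness of $\mathcal{E}$. Your argument is instead a direct, self-contained construction: you re-derive the normal-cone calmness from cone reduction (this is effectively the content of the cited \cite[Theorem 2.1]{LiuPan17}), then explicitly build $\widetilde{\lambda}\in\mathcal{N}_{\mathcal{K}}(\overline{y})$ near $\lambda$, shift $\eta$ to $\eta'=-A\widetilde{\lambda}$, and apply the calmness of $\mathcal{E}$ directly. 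What the paper buys is brevity and modularity (one citation replaces the bookkeeping); what your approach buys is transparency and independence from the black-box theorem in \cite{KK02}. Both rely on the same underlying fact, namely calmness of $y\mapsto\mathcal{N}_{\mathcal{K}}(y)$ under $C^{2}$-cone reducibility, so the mathematical content is the same even though the packaging differs.
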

  \begin{proof}
  {\bf (a)} Notice that $\mathcal{G}_{\overline{x}}(\eta,y)
  =\mathcal{H}(\eta)\cap\mathcal{N}_{\mathcal{K}}(y)$
  for any $(\eta,y)\in\mathbb{X}\times\mathbb{Y}$. Since $\mathcal{K}$
  is assumed to be $C^2$-cone reducible, the multifunction $\mathcal{N}_{\mathcal{K}}$
  is calm at $\overline{y}$ for $\overline{\lambda}$ by \cite[Theorem 2.1]{LiuPan17}.
  Since the multifunction $\mathcal{H}$ is polyhedral,
  from \cite[Proposition 1]{Robinson81} we know that $\mathcal{H}$
  is calm at $\overline{\eta}$ for $\overline{\lambda}$.
  By \cite[Theorem 3.6]{KK02} and the given assumption that
  $\mathcal{E}$ is calm at $\overline{\eta}$ for $\overline{\lambda}$,
  it suffices to check that $\mathcal{H}^{-1}$ has the Aubin property
  at $\overline{\lambda}$ for $\overline{\eta}$. For this purpose,
  fix arbitrary $\lambda,\lambda'\in\mathbb{Y}$ and take an arbitrary
  $u\in\mathcal{H}^{-1}(\lambda)$.
  Then, we have $u+\nabla g(\overline{x})\lambda=0$.
  Notice that $\mathcal{H}^{-1}(\lambda')$ is a closed convex set.
  Let $u'\in \mathcal{H}^{-1}(\lambda')$ be such that
  \(
    \|u-u'\|={\rm dist}(u,\mathcal{H}^{-1}(\lambda')).
  \)
 Then, by using $u'+\nabla g(\overline{x})\lambda'=0$, it holds that
  \[
   {\rm dist}(u,\mathcal{H}^{-1}(\lambda'))=\|u-u'\|
   =\|-\nabla g(\overline{x})\lambda+\nabla g(\overline{x})\lambda'\|
   \le\|\nabla g(\overline{x})\|\|\lambda-\lambda'\|.
  \]
  This shows that $\mathcal{H}^{-1}$ is Lispchitz continuous in $\mathbb{X}$
  by \cite[Definition 9.26]{RW98}, and then has the Aubin property at
  $\overline{\lambda}$ for $\overline{\eta}$.
  The desired result (a) then follows.

  \medskip
  \noindent
  {\bf (b)} Assume that there exist $\delta>0$ and $\gamma>0$ such that
  for any $\lambda\in\mathbb{B}_{\delta}(\overline{\lambda})$,
  inequality \eqref{assump-MQ1} holds. Since $\mathcal{H}$ is calm
  at $\overline{\eta}$ for $\overline{\lambda}$,
  there exist $\widetilde{\delta}>0$ and $\widetilde{\kappa}>0$ such that
  \[
    {\rm dist}(\lambda,\mathcal{H}(\overline{\eta}))
     \le \widetilde{\kappa}{\rm dist}(\overline{\eta},\mathcal{H}^{-1}(\lambda))
     \quad\ \forall\lambda\in\mathbb{B}_{\widetilde{\delta}}(\overline{\lambda}).
  \]
  Set $\delta'=\min(\delta,\widetilde{\delta})$. Then, together with
  inequality \eqref{assump-MQ1}, for any
  $\lambda\in\mathbb{B}_{\delta'}(\overline{\lambda})$ we have
  \begin{equation}\label{assump-MQ}
     {\rm dist}(\lambda,\mathcal{E}(\overline{\eta}))
     \le \gamma\max(1,\widetilde{\kappa})
     \max\big\{{\rm dist}(\lambda,\mathcal{N}_{\mathcal{K}}(\overline{y})),
     {\rm dist}(\overline{\eta},\mathcal{H}^{-1}(\lambda))\big\},
  \end{equation}
  which implies that the following inequality holds
  \begin{equation}\label{calm-ineq}
    {\rm dist}(\lambda,\mathcal{E}(\overline{\eta}))
    \le \gamma\max(1,\widetilde{\kappa}){\rm dist}(\overline{\eta},\mathcal{H}^{-1}(\lambda))
    \quad\ \forall\lambda\in\mathcal{N}_{\mathcal{K}}(\overline{y})\cap\mathbb{B}_{\delta'}(\overline{\lambda}).
  \end{equation}
  Notice that $\mathcal{E}$ is calm at $\overline{\eta}$ for $\overline{\lambda}$
  if and only if $\mathcal{E}^{-1}$ is metrically subregular at $\overline{\lambda}$
  for $\overline{\eta}$, which is equivalent to requiring that there exist
  $\varepsilon>0$ and $\nu>0$ such that
  \begin{equation}\label{calm-ineq}
    {\rm dist}(\lambda,\mathcal{E}(\overline{\eta}))
    \le \nu{\rm dist}(\overline{\eta},\mathcal{H}^{-1}(\lambda))
    \quad\ \forall\lambda\in\mathcal{N}_{\mathcal{K}}(\overline{y})\cap\mathbb{B}_{\varepsilon}(\overline{\lambda}).
  \end{equation}
  This shows that the condition in \eqref{assump-MQ1} implies
  the calmness of $\mathcal{E}$ at $\overline{\eta}$ for $\overline{\lambda}$.
  While the condition in \eqref{assump-MQ1} is implied by
   ${\rm ri}(\mathcal{N}_{\mathcal{K}}(\overline{y}))
   \cap\mathcal{H}(\overline{\eta})\ne\emptyset$ by \cite[Corollary 3]{Bauschke99}.
  \end{proof}
  \begin{remark}
   From \cite[Section 3.1]{Ioffe08} the assumption in \eqref{assump-MQ1} is
   actually a metric qualification which is equivalent to the calmness of
   the following mapping at $(0,0)$ for $\overline{\lambda}$:
   \begin{equation}\label{Mmap}
    \mathcal{F}(u,v):=\big\{\lambda\in\mathbb{Y}\!: \lambda+u\in\mathcal{N}_{\mathcal{K}}(\overline{y}),\,
    \lambda+v\in\mathcal{H}(\overline{\eta})\big\}.
   \end{equation}
   Clearly, the metric qualification in \eqref{assump-MQ1} is weaker than
   the bounded linear regularity of the collection $\{\mathcal{N}_{\mathcal{K}}(\overline{y}),\mathcal{H}(\overline{\eta})\}$,
   while the latter is implied by  ${\rm ri}(\mathcal{N}_{\mathcal{K}}(\overline{y}))\cap\mathcal{H}(\overline{\eta})\ne\emptyset$.
  \end{remark}
  \section{Equivalent characterizations}\label{sec3}

  In this section we shall provide two equivalent characterizations for
  the strong calmness of $\mathcal{S}_{\rm KKT}$. First of all, we
  show that the strong calmness of $\mathcal{S}_{\rm KKT}$ at the origin
  for $(\overline{x},\overline{\lambda})\in\mathcal{S}_{\rm KKT}(0,0)$
  is equivalent to the following error bound for the KKT system.
 \begin{property}\label{Error-bound}
  ({\bf Error bound for KKT system}) Let $(\overline{x},\overline{\lambda})$ be
  a KKT point of \eqref{prob} with $(a,b)=(0,0)$. There exist
  $\varepsilon>0$ and a constant $c>0$ such that for all
  $(x,\lambda)\in\mathbb{B}_{\varepsilon}((\overline{x},\overline{\lambda}))$,
  \[
    \|x-\overline{x}\|+{\rm dist}(\lambda,\mathcal{M}(\overline{x},0,0))
    \le c\left\|\left(\begin{matrix}
                          \nabla_xL(x,\lambda)\\
                          g(x)-\Pi_{\mathcal{K}}\big(\lambda+g(x)\big)
                          \end{matrix}\right)\right\|.
  \]
 \end{property}
 From \cite{Izmailov12,Izmailov15,Cui16} we know that this local error bound
 plays a crucial role in analyzing the fast convergence rate of the sSQP and
 the augmented Lagrangian method.
 \begin{theorem}\label{Chara1-ULip}
  Let $(\overline{x},\overline{\lambda})$ be a KKT point of \eqref{prob}
  with $(a,b)=(0,0)$. Property \ref{Error-bound} holds at $(\overline{x},\overline{\lambda})$
  if and only if $\mathcal{S}_{\rm KKT}$ has the strong calmness
  at the origin for $(\overline{x},\overline{\lambda})$.
 \end{theorem}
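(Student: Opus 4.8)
The plan is to prove both implications by directly relating the natural residual of the KKT system
\[
 r(x,\lambda):=\big(\nabla_{\!x}L(x,\lambda),\,g(x)-\Pi_{\mathcal{K}}(\lambda+g(x))\big),
\]
whose components I will call $r_1(x,\lambda)$ and $r_2(x,\lambda)$, to the perturbation $(a,b)$. Two elementary facts about the projection do all the work: the nonexpansiveness of $\Pi_{\mathcal{K}}$, and the Moreau-type inclusion $v-\Pi_{\mathcal{K}}(v)\in\mathcal{N}_{\mathcal{K}}(\Pi_{\mathcal{K}}(v))$, valid for any closed convex set. Throughout I would shrink the working neighborhood so that $\|\nabla g\|\le M$ on it; note that $r(\overline{x},\overline{\lambda})=0$ because $(\overline{x},\overline{\lambda})$ is a KKT point of \eqref{prob} with $(a,b)=(0,0)$, so $r$ is small whenever $(x,\lambda)$ is near $(\overline{x},\overline{\lambda})$.

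For the implication \emph{Property \ref{Error-bound} $\Rightarrow$ strong calmness}, I would take $(a,b)\in\mathbb{B}_{\delta}((0,0))$ and $(x,\lambda)\in\mathcal{S}_{\rm KKT}(a,b)\cap\mathbb{B}_{\varepsilon}((\overline{x},\overline{\lambda}))$ and bound $r(x,\lambda)$ by $(a,b)$. The first component is immediate, since the KKT system \eqref{KKT} gives $r_1(x,\lambda)=\nabla_{\!x}L(x,\lambda)=a$. For the second, substituting $g(x)-b=\Pi_{\mathcal{K}}(g(x)-b+\lambda)$ into $r_2$ yields
\[
 r_2(x,\lambda)=b+\big[\Pi_{\mathcal{K}}(g(x)-b+\lambda)-\Pi_{\mathcal{K}}(g(x)+\lambda)\big],
\]
so nonexpansiveness gives $\|r_2(x,\lambda)\|\le 2\|b\|$, whence $\|r(x,\lambda)\|\le C\|(a,b)\|$. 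Plugging this into Property \ref{Error-bound} produces exactly the estimate in Definition \ref{Upper-Lip} with $\kappa=cC$.

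The converse \emph{strong calmness $\Rightarrow$ Property \ref{Error-bound}} is the substantial direction, and its main obstacle is that a given nearby pair $(x,\lambda)$ need not itself be a KKT solution for any small perturbation: there may be no feasible point close to $g(x)$ at which $\lambda$ is normal. I would get around this by perturbing the multiplier rather than forcing it. Setting $\widehat{z}:=\Pi_{\mathcal{K}}(g(x)+\lambda)$, the Moreau inclusion shows $\mu:=\lambda+r_2(x,\lambda)=(g(x)+\lambda)-\widehat{z}\in\mathcal{N}_{\mathcal{K}}(\widehat{z})$, so with $b:=r_2(x,\lambda)$ we have $g(x)-b=\widehat{z}$ and hence $\mu\in\mathcal{N}_{\mathcal{K}}(g(x)-b)$. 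Choosing $a:=\nabla_{\!x}L(x,\mu)=r_1(x,\lambda)+\nabla g(x)r_2(x,\lambda)$ then makes $(x,\mu)\in\mathcal{S}_{\rm KKT}(a,b)$ an \emph{exact} solution of the perturbed system, with $\|(a,b)\|\le C\|r(x,\lambda)\|$ for a constant $C$ depending on $M$. Since $r_2\to 0$ as $(x,\lambda)\to(\overline{x},\overline{\lambda})$, both $(x,\mu)$ and $(a,b)$ lie in the neighborhoods required by Definition \ref{Upper-Lip} once $(x,\lambda)$ is close enough, so strong calmness applies to $(x,\mu)$.

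Finally I would transfer the estimate back from $\mu$ to $\lambda$. Strong calmness gives $\|x-\overline{x}\|+{\rm dist}(\mu,\mathcal{M}(\overline{x},0,0))\le\kappa\|(a,b)\|$, and the triangle inequality together with $\|\mu-\lambda\|=\|r_2(x,\lambda)\|$ yields
\[
 \|x-\overline{x}\|+{\rm dist}(\lambda,\mathcal{M}(\overline{x},0,0))\le\|x-\overline{x}\|+{\rm dist}(\mu,\mathcal{M}(\overline{x},0,0))+\|r_2(x,\lambda)\|\le c\,\|r(x,\lambda)\|,
\]
which is Property \ref{Error-bound}. The only delicate points are the bookkeeping of neighborhood radii, handled by the continuity of $r$ and the vanishing of $r$ at the reference point, and the constant tracking through $\nabla g$; the conceptual crux is the multiplier-shift construction $\mu=\lambda+r_2$, which converts the possibly non-solvable pair $(x,\lambda)$ into a genuine perturbed KKT solution at controlled cost.
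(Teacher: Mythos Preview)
Your proof is correct and follows essentially the same route as the paper. In both directions the paper does exactly what you outline: for the forward implication it bounds the residual by $(a,b)$ via the nonexpansiveness of $\Pi_{\mathcal{K}}$, and for the converse it sets $b:=r_2(x,\lambda)$, shifts the multiplier to $\lambda+b$ (your $\mu$), and checks that $(x,\lambda+b)\in\mathcal{S}_{\rm KKT}(a+\nabla g(x)b,\,b)$ before transferring the estimate back by the triangle inequality---your ``multiplier-shift construction'' is precisely the paper's construction.
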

 \begin{proof}
  ``$\Longrightarrow$.'' Suppose that Property \ref{Error-bound} holds
  at $(\overline{x},\overline{\lambda})$. Then, there exist $\varepsilon>0$
  and a constant $c>0$ such that for all
  $(x,\lambda)\in\mathbb{B}_{\varepsilon}((\overline{x},\overline{\lambda}))$,
  \[
    \|x-\overline{x}\|+{\rm dist}(\lambda,\mathcal{M}(\overline{x},0,0))
    \le c\left\|\left(\begin{matrix}
                          \nabla_xL(x,\lambda)\\
                          g(x)-\Pi_{\mathcal{K}}\big(\lambda+g(x)\big)
                          \end{matrix}\right)\right\|.
  \]
  Fix an arbitrary $\delta>0$. Take an arbitrary $(a,b)\in\mathbb{X}\times\mathbb{Y}$
  with $\|(a,b)\|\le\delta$. Let $(x',\lambda')$ be an arbitrary point from
  $\mathcal{S}_{\rm KKT}(a,b)\cap\mathbb{B}_{\varepsilon}((\overline{x},\overline{\lambda}))$.
  From the last inequality, we have
  \begin{equation}\label{error-ineq}
    \|x'-\overline{x}\|+{\rm dist}(\lambda',\mathcal{M}(\overline{x},0,0))
    \le c\left\|\left(\begin{matrix}
                          \nabla_xL(x',\lambda')\\
                          g(x')-\Pi_{\mathcal{K}}\big(\lambda'+g(x')\big)
                          \end{matrix}\right)\right\|.
  \end{equation}
  Since $(x',\lambda')\in\mathcal{S}_{\rm KKT}(a,b)$, we have
  $\nabla_xL(x',\lambda')=a$ and $g(x')-\Pi_{\mathcal{K}}(g(x')+\lambda'-b)=b$.
  Together with the inequality \eqref{error-ineq} and the global Lipschitz continuity
  of $\Pi_{\mathcal{K}}$,
  \begin{align*}
    \|x'-\overline{x}\|+{\rm dist}(\lambda',\mathcal{M}(\overline{x},0,0))
    &\le c\left\|\left(\begin{matrix}
                          a\\ b+\Pi_{\mathcal{K}}(\lambda'\!+\!g(x')-b)-\Pi_{\mathcal{K}}(\lambda'\!+\!g(x'))
                     \end{matrix}\right)\right\|\\
    &\le 2c\|(a,b)\|.
  \end{align*}
  From the arbitrariness of $(x',\lambda')$ in
  $\mathcal{S}_{\rm KKT}(a,b)\cap\mathbb{B}_{\varepsilon}((\overline{x},\overline{\lambda}))$,
  we conclude that the multifunction $\mathcal{S}_{\rm KKT}$ has
  the strong calmness at the origin for $(\overline{x},\overline{\lambda})$.

  \medskip
  \noindent
  ``$\Longleftarrow$.'' Suppose that $\mathcal{S}_{\rm KKT}$ has the strong calmness
  at the origin for $(\overline{x},\overline{\lambda})$. By Definition \ref{Upper-Lip},
  there exist $\delta'>0$, $\varepsilon'>0$ and $\kappa'>0$ such that
  for any $(a',b')\in\mathbb{X}\times\mathbb{Y}$ with $\|(a',b')\|\le\delta'$ and any
  $(x',\lambda')\in\mathcal{S}_{\rm KKT}(a',b')\cap\mathbb{B}_{\varepsilon'}((\overline{x},\overline{\lambda}))$,
  \begin{equation}\label{ULip-equa31}
    \|x'-\overline{x}\|+{\rm dist}(\lambda',\mathcal{M}(\overline{x},0,0))\le\kappa'\|(a',b')\|.
  \end{equation}
  Notice that the functions $\nabla_xL(x,\lambda)$ and $g(x)-\Pi_{\mathcal{K}}(g(x)+\lambda)$
  are continuous with respect to $(x,\lambda)$.
  There exist $\varepsilon>0$ and a constant $\gamma>0$
  such that for any $(x,\lambda)\in\mathbb{B}_{\varepsilon}((\overline{x},\overline{\lambda}))$,
  \begin{align*}
    \|\nabla_xL(x,\lambda)\|= \|\nabla_xL(x,\lambda)-\nabla_xL(\overline{x},\overline{\lambda})\|
    \le\frac{\delta'}{2\sqrt{2}},\qquad\qquad\qquad\qquad\\
    \|g(x)-\Pi_{\mathcal{K}}(g(x)\!+\!\lambda)\|
    =\|g(x)-\Pi_{\mathcal{K}}(g(x)\!+\!\lambda)-g(\overline{x})+\Pi_{\mathcal{K}}(g(\overline{x})\!+\!\overline{\lambda})\|
    \le\min\big(\frac{\delta'}{\sqrt{2}},\frac{\varepsilon'}{2}\big),\\
    \|\nabla g(x)\|\le \gamma\ \ {\rm and}\ \
    \big\|\nabla g(x)\big[g(x)-\Pi_{\mathcal{K}}(g(x)+\lambda)\big]\big\|\le\frac{\delta'}{2\sqrt{2}}.
    \qquad\qquad\qquad
  \end{align*}
  Set $\epsilon=\min({\varepsilon'}/{2},\varepsilon)$.
  Fix an arbitrary point pair $(x,\lambda)\in\mathbb{B}_{\epsilon}((\overline{x},\overline{\lambda}))$.
  Write $a=\nabla_xL(x,\lambda)$ and $b=g(x)-\Pi_{\mathcal{K}}(g(x)+\lambda)$.
  One may check that $(x,\lambda+b)\in S_{\rm KKT}(a+\!\nabla g(x)b,b)$.
  From $(x,\lambda)\in\mathbb{B}_{\epsilon}((\overline{x},\overline{\lambda}))$
  and the last inequalities, it follows that $\|(a+\!\nabla g(x)b,b)\|\le\delta'$.
  Also, from $(x,\lambda)\in\mathbb{B}_{\epsilon}\big((\overline{x},\overline{\lambda})\big)$
  and $\|b\|\le {\varepsilon'}/{2}$, we have
  \(
    \|(x,\lambda+b)-(\overline{x},\overline{\lambda})\|\le \varepsilon'.
  \)
  Now from $(x,\lambda+b)\in\mathbb{B}_{\varepsilon'}((\overline{x},\overline{\lambda}))
  \cap S_{\rm KKT}(a+\!\nabla g(x)b,b)$ and inequality \eqref{ULip-equa31},
  it follows that
  \[
    \|x-\overline{x}\|+{\rm dist}(\lambda+b,\mathcal{M}(\overline{x},0,0))\le
    \kappa'\left\|\left(\begin{matrix}
                   a+\!\nabla g(x)b\\ b
                   \end{matrix}\right)\right\|
  \]
  which, together with $a=\nabla_xL(x,\lambda)$ and $b=g(x)-\Pi_{\mathcal{K}}(g(x)+\lambda)$,
  implies that
  \begin{align*}
    \|x-\overline{x}\|+{\rm dist}(\lambda,\mathcal{M}(\overline{x},0,0))
    &\le \|x-\overline{x}\|+{\rm dist}(\lambda+b,\mathcal{M}(\overline{x},0,0))+\|b\|\\
    &\le \kappa'\sqrt{2(\|\nabla g(x)\|^2+1)}
         \left\|(a,b)\right\|+\|b\|\\
    &\le\big(1+\kappa'\sqrt{2\gamma^2+2}\big)
         \left\|\left(\begin{matrix}
                \nabla_xL(x,\lambda)\\ g(x)-\Pi_{\mathcal{K}}(g(x)+\lambda)
              \end{matrix}\right)\right\|.
  \end{align*}
  This, by the arbitrariness of $(x,\lambda)$ in
  $\mathbb{B}_{\epsilon}\big((\overline{x},\overline{\lambda})\big)$,
  shows that Property \ref{Error-bound} holds.
 \end{proof}
 \begin{remark}\label{remark-errbound}
  When $\mathcal{K}$ is specified as the nonpositive orthant cone
  in $\mathbb{R}^m$, the conclusion of Theorem \ref{Chara1-ULip}
  was obtained in \cite[Remark 1]{Izmailov12} and \cite[Remark 4]{Izmailov13}
  by invoking \cite[Theorem 2]{Fischer02}. In fact, from the proof of
  Theorem \ref{Chara1-ULip}, it is not difficult to obtain the following conclusion:
  the multifunction $\mathcal{S}_{\rm KKT}$ is calm at the origin for
  $(\overline{x},\overline{\lambda})$ if and only if there exist $\varepsilon>0$
  and a constant $c>0$ such that for all
  $(x,\lambda)\in\mathbb{B}_{\varepsilon}((\overline{x},\overline{\lambda}))$,
  \[
    {\rm dist}\big((x,\lambda),\mathcal{S}_{\rm KKT}(0,0)\big)
    \le c \left\|\left(\begin{matrix}
                          \nabla_xL(x,\lambda)\\
                          g(x)-\Pi_{\mathcal{K}}\big(\lambda+g(x)\big)
                          \end{matrix}\right)\right\|.
  \]
  This partly extends the result of \cite[Theorem 2]{Fischer02} to
  nonpolyhedral conic optimization.
  \end{remark}

  In order to provide the other equivalent characterization of
  the strong calmness, we need to introduce the concept
  of pseudo-isolated calmness of $\mathcal{X}_{\rm KKT}$.
 \begin{definition}\label{Pseudo-icalm}
  Let $\overline{x}$ be a stationary point of \eqref{prob} with $(a,b)=(0,0)$ and $\overline{\lambda}\in\mathcal{M}(\overline{x},0,0)$.
  The multifunction $\mathcal{X}_{\rm KKT}$ is said to have the pseudo-isolated
  calmness at the origin for $\overline{x}$ if there exist $\varepsilon>0,\delta>0$
  and a constant $\kappa>0$ such that for any $(a,b)\in\mathbb{X}\times\mathbb{Y}$
  with $\|(a,b)\|\le\delta$ and any $(x,\lambda)\in\mathcal{S}_{\rm KKT}(a,b)\cap\mathbb{B}_{\varepsilon}((\overline{x},\overline{\lambda}))$,
  the following estimate holds
  \[
    \|x-\overline{x}\|\le\kappa\|(a,b)\|.
  \]
 \end{definition}

  The reason why we call this property the pseudo-isolated calmness
  of $\mathcal{X}_{\rm KKT}$ is that it implies the isolated
  calmness of the multifunction $\mathcal{X}$ at $(\overline{\lambda},0,0)$
  for $\overline{x}$ by the following proposition, but may not imply
  the isolatedness of $\overline{x}$ unless
  $\mathcal{M}(\overline{x},0,0)=\{\overline{\lambda}\}$.
  In fact, when $\mathcal{M}(\overline{x},0,0)=\{\overline{\lambda}\}$,
  under Robinson's CQ, the pseudo-isolated calmness of $\mathcal{X}_{\rm KKT}$
  at the origin for $\overline{x}$ is equivalent to its isolated calmness
  at the origin for $\overline{x}$; see Appendix.
 \begin{proposition}\label{X-icalm}
  Let $\overline{x}$ be a stationary point of \eqref{prob}
  with $(a,b)=(0,0)$ and $\overline{\lambda}\in\mathcal{M}(\overline{x},0,0)$.
  \begin{itemize}
  \item [(a)] The multifunction $\mathcal{X}$ has the isolated calmness at $(\overline{\lambda},0,0)$
              for $\overline{x}$ if and only if there exist $\varepsilon>0$, $\delta>0$
              and $\kappa>0$ such that for any $(a,b)\in\mathbb{X}\times\mathbb{Y}$
              with $\|(a,b)\|\le\delta$ and any $(x,\lambda)\in\mathcal{S}_{\rm KKT}(a,b)\cap\mathbb{B}_{\varepsilon}((\overline{x},\overline{\lambda}))$,
              the following estimate holds
              \begin{equation}\label{X-estimate}
                \|x-\overline{x}\|\le\kappa\|(\lambda-\overline{\lambda},a,b)\|.
              \end{equation}

  \item [(b)] The multifunction $\mathcal{X}$ has the isolated calmness at $(\overline{\lambda},0,0)$
              for $\overline{x}$ if and only if $\xi=0$ is the unique solution to the following system
              \begin{equation}\label{Xsystem}
               \left\{\begin{array}{l}
               \nabla_{xx}^2L(\overline{x},\overline{\lambda})\xi=0,\\
                g'(\overline{x})\xi-\Pi_{\mathcal{K}}'(g(\overline{x})\!+\!\overline{\lambda};g'(\overline{x})\xi)=0.
               \end{array}\right.
              \end{equation}
 \end{itemize}
 \end{proposition}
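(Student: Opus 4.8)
The plan is to settle (a) by a direct translation of the isolated-calmness definition and to settle (b) by feeding the graphical-derivative criterion of Lemma \ref{chara-icalm} into the tangent-cone description furnished by Lemma \ref{dir-proj}. For (a), recall from \eqref{MX} that $x\in\mathcal{X}(\lambda,a,b)$ iff $(x,\lambda)\in\mathcal{S}_{\rm KKT}(a,b)$, so $((\overline{\lambda},0,0),\overline{x})\in{\rm gph}\,\mathcal{X}$. Reading Definition \ref{calm-def} for $\mathcal{X}$ at $(\overline{\lambda},0,0)$ for $\overline{x}$, isolated calmness means there are $\kappa,\varepsilon',\delta'>0$ so that $\|x-\overline{x}\|\le\kappa\|(\lambda-\overline{\lambda},a,b)\|$ whenever $(\lambda,a,b)\in\mathbb{B}_{\varepsilon'}((\overline{\lambda},0,0))$ and $x\in\mathcal{X}(\lambda,a,b)\cap\mathbb{B}_{\delta'}(\overline{x})$. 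I would then show this coincides with the displayed condition \eqref{X-estimate} by trading the single ball $\mathbb{B}_{\varepsilon'}((\overline{\lambda},0,0))$ for separate balls in $\lambda$ and in $(a,b)$ and recombining the ball in $\lambda$ with $\mathbb{B}_{\delta'}(\overline{x})$ into the joint ball $\mathbb{B}_{\varepsilon}((\overline{x},\overline{\lambda}))$; up to adjusting radii these neighborhood systems cut out the same local region, and the inequality is verbatim. The local uniqueness $\mathcal{X}(\overline{\lambda},0,0)\cap\mathbb{B}_{\delta'}(\overline{x})=\{\overline{x}\}$ is recovered by taking $(\lambda,a,b)=(\overline{\lambda},0,0)$, which is the operative singleton requirement consistent with Lemma \ref{chara-icalm}. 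This step is routine bookkeeping.

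For (b), by (a) and Lemma \ref{chara-icalm} it suffices to prove that $D\mathcal{X}((\overline{\lambda},0,0)|\overline{x})(0,0,0)$ is exactly the solution set of \eqref{Xsystem}. Take $\xi$ in this graphical derivative, i.e. $(0,0,0,\xi)\in\mathcal{T}_{{\rm gph}\,\mathcal{X}}((\overline{\lambda},0,0),\overline{x})$, and choose $t_k\downarrow 0$, $\xi^k\to\xi$ and $(\Delta\lambda^k,\Delta a^k,\Delta b^k)\to(0,0,0)$ with $\overline{x}+t_k\xi^k\in\mathcal{X}(\overline{\lambda}+t_k\Delta\lambda^k,t_k\Delta a^k,t_k\Delta b^k)$. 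Unfolding the two defining relations of $\mathcal{S}_{\rm KKT}$ and dividing by $t_k$, the smooth equation $\nabla_{\!x}L=a$ expands around $(\overline{x},\overline{\lambda})$ (where $\nabla_{\!x}L(\overline{x},\overline{\lambda})=0$) into $\nabla_{xx}^2L(\overline{x},\overline{\lambda})\xi=0$, while the inclusion $\overline{\lambda}+t_k\Delta\lambda^k\in\mathcal{N}_{\mathcal{K}}(g(\overline{x}+t_k\xi^k)-t_k\Delta b^k)$ shows that $(g'(\overline{x})\xi,0)\in\mathcal{T}_{{\rm gph}\,\mathcal{N}_{\mathcal{K}}}(g(\overline{x}),\overline{\lambda})$, i.e. $0\in D\mathcal{N}_{\mathcal{K}}(g(\overline{x})|\overline{\lambda})(g'(\overline{x})\xi)$. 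Applying Lemma \ref{dir-proj} with $\overline{z}=g(\overline{x})$ and $\overline{\mu}=\overline{\lambda}$ rewrites the latter as $g'(\overline{x})\xi-\Pi_{\mathcal{K}}'(g(\overline{x})+\overline{\lambda};g'(\overline{x})\xi)=0$, so $\xi$ solves \eqref{Xsystem}. For the reverse inclusion I would run this backwards: given a solution $\xi$ of \eqref{Xsystem}, the exact characterization in Lemma \ref{dir-proj} yields a sequence $(\widetilde{y}^k,\widetilde{\lambda}^k)\in{\rm gph}\,\mathcal{N}_{\mathcal{K}}$ realizing the direction $(g'(\overline{x})\xi,0)$; setting $x^k=\overline{x}+t_k\xi$, $\lambda^k=\widetilde{\lambda}^k$ and then $t_k\Delta b^k:=g(x^k)-\widetilde{y}^k$, $t_k\Delta a^k:=\nabla_{\!x}L(x^k,\lambda^k)$, the relations $\nabla_{xx}^2L(\overline{x},\overline{\lambda})\xi=0$ and $\widetilde{\lambda}^k-\overline{\lambda}=o(t_k)$ force $(\Delta\lambda^k,\Delta a^k,\Delta b^k)\to(0,0,0)$, exhibiting $(0,0,0,\xi)$ as a tangent direction. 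Hence $D\mathcal{X}((\overline{\lambda},0,0)|\overline{x})(0,0,0)$ equals the solution set of \eqref{Xsystem}, and Lemma \ref{chara-icalm} gives the claimed equivalence.

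I expect the main obstacle to be confirming that the tangent-cone computation in (b) is an exact identity rather than a one-sided estimate. The necessity direction is a clean first-order expansion, but the sufficiency direction must convert a solution $\xi$ of \eqref{Xsystem} into an honest sequence in ${\rm gph}\,\mathcal{X}$; this leans essentially on Lemma \ref{dir-proj} characterizing $D\mathcal{N}_{\mathcal{K}}$ exactly (which itself rests on the directional differentiability of $\Pi_{\mathcal{K}}$ guaranteed by $C^{2}$-cone reducibility) and on the smoothness of $g$ and $\nabla_{\!x}L$, so that no constraint qualification is needed to compute the composite graphical derivative at the zero direction. One must also verify that the auxiliary perturbations $\Delta a^k,\Delta b^k$ can indeed be absorbed at order $o(t_k)$, which is precisely where the two equations of \eqref{Xsystem} are consumed.
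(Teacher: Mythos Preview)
Your proposal is correct. Part (a) is handled essentially as in the paper: both arguments amount to translating between a single neighborhood in $(\lambda,a,b)$ on one side and separate neighborhoods in $(a,b)$ and in $(x,\lambda)$ on the other, with the obvious radius adjustments.

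For (b) the two routes genuinely differ. The paper notes that ${\rm gph}\,\mathcal{X}=\mathcal{A}^{-1}{\rm gph}\,\mathcal{S}_{\rm KKT}$ for the linear bijection $\mathcal{A}(\lambda,a,b,x)=(a,b,x,\lambda)$, pulls back the tangent cone via \cite[Exercise~6.7]{RW98}, and then invokes the ready-made description of $D\mathcal{S}_{\rm KKT}((0,0)|(\overline{x},\overline{\lambda}))$ from \cite[Lemma~18~\&~19]{DingSZ17}; specializing to the zero input direction gives \eqref{Xsystem} immediately, and Lemma~\ref{chara-icalm} finishes. You instead compute $D\mathcal{X}((\overline{\lambda},0,0)|\overline{x})(0,0,0)$ from first principles via tangent sequences, using Lemma~\ref{dir-proj} for the normal-cone component and Taylor expansion for the smooth part, and then construct an explicit approximating sequence for the reverse inclusion. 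The paper's route is shorter and incidentally delivers the full formula for $D\mathcal{X}$, not only its value at $(0,0,0)$; your route is self-contained (it does not need the external reference \cite{DingSZ17}) and makes transparent exactly where the two equations of \eqref{Xsystem} and the exact characterization in Lemma~\ref{dir-proj} are each consumed in the sufficiency direction.
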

 \begin{proof}
  {\bf(a)}``$\Longrightarrow$''. Suppose that the multifunction $\mathcal{X}$
  has the isolated calmness at $(\overline{\lambda},0,0)$ for $\overline{x}$.
  Then, there exist $\varepsilon'>0, \delta'>0$ and $\kappa'>0$ such that
  for any $(\lambda,a,b)\in\mathbb{B}_{\delta'}((\overline{\lambda},0,0))$,
  \[
    \mathcal{X}(\lambda,a,b)\cap\mathbb{B}_{\varepsilon'}(\overline{x}) \subseteq \{\overline{x}\}+\kappa'\|(\lambda-\overline{\lambda},a,b)\|\mathbb{B}_{\mathbb{Y}\times\mathbb{X}\times\mathbb{Y}}.
  \]
  Set $\delta=\delta'/2$ and $\varepsilon=\min(\varepsilon',\delta'/2)$.
  Fix an arbitrary $(a,b)\in\mathbb{X}\times\mathbb{Y}$ with $\|(a,b)\|\le\delta$.
  Pick up an arbitrary point $(x,\lambda)\in\mathcal{S}_{\rm KKT}(a,b)\cap\mathbb{B}_{\varepsilon}((\overline{x},\overline{\lambda}))$.
  Clearly, $(\lambda,a,b)\in\mathbb{B}_{\delta'}((\overline{\lambda},0,0))$
  and $x\in\mathcal{X}(\lambda,a,b)\cap\mathbb{B}_{\varepsilon'}(\overline{x})$.
  From the last inclusion, it immediately follows that
  \[
    \|x-\overline{x}\|\le\kappa'\|(\lambda-\overline{\lambda},a,b)\|.
  \]

  \noindent
  ``$\Longleftarrow$''. Suppose that there exist $\varepsilon>0,\delta>0$ and $\kappa>0$
  such that for any $(a,b)\in\mathbb{X}\times\mathbb{Y}$ with $\|(a,b)\|\le\delta$ and any
  $(x,\lambda)\in\mathcal{S}_{\rm KKT}(a,b)\cap\mathbb{B}_{\varepsilon}((\overline{x},\overline{\lambda}))$,
  the estimate in \eqref{X-estimate} holds.
  Set $\delta'=\min(\varepsilon/2,\delta)$ and $\varepsilon'=\varepsilon/2$.
  Fix an arbitrary $(\lambda,a,b)\in\mathbb{B}_{\delta'}((\overline{\lambda},0,0))$.
  Pick up an arbitrary $x\in\mathcal{X}(\lambda,a,b)\cap\mathbb{B}_{\varepsilon'}(\overline{x})$.
  Clearly, $(x,\lambda)\in\mathcal{S}_{\rm KKT}(a,b)\cap\mathbb{B}_{\varepsilon}((\overline{x},\overline{\lambda}))$.
  From \eqref{X-estimate},
  \(
    \|x-\overline{x}\|\le\kappa\|(\lambda-\overline{\lambda},a,b)\|,
  \)
  and hence $\mathcal{X}$ has the isolated calmness at $(\overline{\lambda},0,0)$
  for $\overline{x}$.

  \medskip
  \noindent
  {\bf(b)} From the definition of $\mathcal{X}$ and $\mathcal{S}_{\rm KKT}$,
  it is immediate to obtain $(\lambda,a,b,x)\in{\rm gph}\mathcal{X}$
  iff $\mathcal{A}(\lambda,a,b,x)\in {\rm gph}\mathcal{S}_{\rm KKT}$,
  where $\mathcal{A}:\mathbb{Y}\times\mathbb{X}\times\mathbb{Y}\times\mathbb{X}
  \to\mathbb{X}\times\mathbb{Y}\times\mathbb{X}\times\mathbb{Y}$ is defined by
  \[
    \mathcal{A}(\lambda,a,b,x)=(a,b,x,\lambda).
  \]
  Clearly, the linear map $\mathcal{A}$ is invertible
  and ${\rm gph}\mathcal{X}=\mathcal{A}^{-1}{\rm gph}\mathcal{S}_{\rm KKT}$.
  By \cite[Exercise 6.7]{RW98},
  \[
    \mathcal{T}_{{\rm gph}\mathcal{X}}(\overline{\lambda},0,0,\overline{x})
    =\mathcal{A}^{-1}\mathcal{T}_{{\rm gph}\mathcal{S}_{\rm KKT}}(0,0,\overline{x},\overline{\lambda}).
  \]
  Hence, $\Delta x\in D\mathcal{X}((\overline{\lambda},0,0)|\overline{x})
  (\Delta\lambda,\Delta a,\Delta b)$ iff $(\Delta x,\Delta\lambda)\in
  D\mathcal{S}_{\rm KKT}((0,0)|(\overline{x},\overline{\lambda}))(\Delta a,\Delta b)$.
  Together with the characterization of $D\mathcal{S}_{\rm KKT}((0,0)|(\overline{x},\overline{\lambda}))$
  in \cite[Lemma 18 \& 19]{DingSZ17}, it then follows that
  $\Delta x\in D\mathcal{X}((\overline{\lambda},0,0)|\overline{x})
  (\Delta\lambda,\Delta a,\Delta b)$ if and only if
  $(\Delta x,\Delta\lambda)$ satisfies
  \[
   \left\{\begin{array}{l}
   \nabla_{xx}^2L(\overline{x},\overline{\lambda})\Delta x+g'(\overline{x})\Delta\lambda=0,\\
   g'(\overline{x})\Delta x-\Pi_{\mathcal{K}}'(g(\overline{x})\!+\!\overline{\lambda};g'(\overline{x})\Delta x+\Delta\lambda)=0
   \end{array}\right.
  \]
  By Lemma \ref{chara-icalm}, we conclude that $\mathcal{X}$ is isolated calm
  at $(\overline{\lambda},0,0)$ for $\overline{x}$ if and only if
  system \eqref{Xsystem} has the unique solution $\xi=0$.
  The proof is then completed.
 \end{proof}

  It is worthwhile to mention that for the case that $\mathcal{K}$ is
  a semidefinite positive cone, Zhang and Zhang \cite{Zhang17} proved
  that the estimate in \eqref{X-estimate} holds iff \eqref{Xsystem}
  has only the trivial solution $\xi=0$. Here, we associate this property
  with the isolated calmness of $\mathcal{X}$.

  \medskip

  In addition, we also need the following lemma which states that
  the calmness of $\mathcal{M}$ at $(\overline{x},0,0)$
  for $\overline{\lambda}\in\mathcal{M}(\overline{x},0,0)$ is equivalent to
  that of $\mathcal{G}_{\overline{x}}$ at $(\nabla\!f(\overline{x}),g(\overline{x}))$
  for $\overline{\lambda}$.
  \begin{lemma}\label{icalm-MMGmap}
   Let $(\overline{x},\overline{\lambda})$ be a KKT point of the problem \eqref{prob}
   with $(a,b)\!=(0,0)$. Then, $\mathcal{M}$ is calm at $(\overline{x},0,0)$ for
   $\overline{\lambda}$ if and only if $\mathcal{G}_{\overline{x}}$
   is calm at $(\nabla\!f(\overline{x}),g(\overline{x}))$ for $\overline{\lambda}$.
  \end{lemma}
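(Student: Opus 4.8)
The plan is to exploit the explicit descriptions of both multifunctions together with the identity $\mathcal{G}_{\overline{x}}(\nabla\!f(\overline{x}),g(\overline{x}))=\mathcal{M}(\overline{x},0,0)$ noted just before the statement, so that both calmness properties are measured against the \emph{same} reference set. Recalling the definitions, $\lambda\in\mathcal{M}(x,a,b)$ iff $\nabla\!f(x)+\nabla\!g(x)\lambda=a$ and $\lambda\in\mathcal{N}_{\mathcal{K}}(g(x)-b)$, while $\lambda\in\mathcal{G}_{\overline{x}}(\eta,y)$ iff $\eta+\nabla\!g(\overline{x})\lambda=0$ and $\lambda\in\mathcal{N}_{\mathcal{K}}(y)$ (using that $y=\Pi_{\mathcal{K}}(y+\lambda)$ is equivalent to $\lambda\in\mathcal{N}_{\mathcal{K}}(y)$ by the projection characterization). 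I will convert each calmness estimate into the other by constructing, from a point in the graph of one map, a nearby point in the graph of the other, and then comparing the perturbation norms, using the distance form of calmness implied by Definition \ref{calm-def}.

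For the direction ``$\mathcal{M}$ calm $\Longrightarrow\mathcal{G}_{\overline{x}}$ calm,'' given $(\eta,y)$ near $(\nabla\!f(\overline{x}),g(\overline{x}))$ and $\lambda\in\mathcal{G}_{\overline{x}}(\eta,y)\cap\mathbb{B}_{\delta'}(\overline{\lambda})$, I would keep the base point fixed by setting $x=\overline{x}$ and choosing $a:=\nabla\!f(\overline{x})+\nabla\!g(\overline{x})\lambda=\nabla\!f(\overline{x})-\eta$ and $b:=g(\overline{x})-y$. A direct check shows $\lambda\in\mathcal{M}(\overline{x},a,b)$, and since $x=\overline{x}$ one has exactly $\|(x,a,b)-(\overline{x},0,0)\|=\|(a,b)\|=\|(\eta,y)-(\nabla\!f(\overline{x}),g(\overline{x}))\|$. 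Applying the calmness of $\mathcal{M}$ at $(\overline{x},0,0)$ together with the identity $\mathcal{M}(\overline{x},0,0)=\mathcal{G}_{\overline{x}}(\nabla\!f(\overline{x}),g(\overline{x}))$ yields the calmness estimate for $\mathcal{G}_{\overline{x}}$ with the same modulus. This direction needs no Lipschitz bookkeeping precisely because $\nabla\!g$ is evaluated at the same point $\overline{x}$ on both sides.

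The converse ``$\mathcal{G}_{\overline{x}}$ calm $\Longrightarrow\mathcal{M}$ calm'' is the main obstacle, since now $x$ genuinely varies and the frozen operator $\nabla\!g(\overline{x})$ in $\mathcal{G}_{\overline{x}}$ must be compared with $\nabla\!g(x)$. Given $(x,a,b)$ near $(\overline{x},0,0)$ and $\lambda\in\mathcal{M}(x,a,b)\cap\mathbb{B}_{\delta}(\overline{\lambda})$, I would set $y:=g(x)-b$ (so $\lambda\in\mathcal{N}_{\mathcal{K}}(y)$) and $\eta:=-\nabla\!g(\overline{x})\lambda$ (so $\eta+\nabla\!g(\overline{x})\lambda=0$), giving $\lambda\in\mathcal{G}_{\overline{x}}(\eta,y)$. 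The crux is to bound $\|(\eta,y)-(\nabla\!f(\overline{x}),g(\overline{x}))\|$ linearly by $\|(x,a,b)-(\overline{x},0,0)\|$. The $y$-component is immediate from the local Lipschitz continuity of $g$: $\|y-g(\overline{x})\|\le L_g\|x-\overline{x}\|+\|b\|$. For the $\eta$-component I would use the stationarity relation $\nabla\!g(x)\lambda=a-\nabla\!f(x)$ to write $\eta-\nabla\!f(\overline{x})=(\nabla\!f(x)-\nabla\!f(\overline{x}))-a+(\nabla\!g(x)-\nabla\!g(\overline{x}))\lambda$, and then control the three terms by the local Lipschitz continuity of $\nabla\!f$ and $\nabla\!g$ (both available since $f,g$ are $C^2$) together with the bound $\|\lambda\|\le\|\overline{\lambda}\|+\delta$ coming from $\lambda\in\mathbb{B}_{\delta}(\overline{\lambda})$. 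This produces a constant $C>0$ with $\|(\eta,y)-(\nabla\!f(\overline{x}),g(\overline{x}))\|\le C\|(x,a,b)-(\overline{x},0,0)\|$; shrinking $\varepsilon,\delta$ so that $(\eta,y)$ lands in the calmness neighborhood of $\mathcal{G}_{\overline{x}}$ and invoking that calmness (again using the identity of reference sets) gives ${\rm dist}(\lambda,\mathcal{M}(\overline{x},0,0))\le\kappa'C\|(x,a,b)-(\overline{x},0,0)\|$, which is exactly the calmness of $\mathcal{M}$ at $(\overline{x},0,0)$ for $\overline{\lambda}$.
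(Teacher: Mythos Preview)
Your proposal is correct and follows essentially the same approach as the paper: in each direction you construct the same auxiliary point in the other graph (with $x=\overline{x}$, $a=\nabla\!f(\overline{x})-\eta$, $b=g(\overline{x})-y$ for one implication, and $y=g(x)-b$, $\eta=-\nabla\!g(\overline{x})\lambda$ for the converse) and control the perturbation via the local Lipschitz continuity of $\nabla\!f$, $g$, and $\nabla\!g$. The paper merely writes $\eta$ in the algebraically equivalent expanded form $\eta=\nabla\!f(x)-a+[\nabla\!g(x)-\nabla\!g(\overline{x})]\lambda$, which makes the Lipschitz estimate immediate, but this is the same $\eta$ as yours.
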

  \begin{proof}
   Suppose that the multiplier set mapping $\mathcal{M}$ is calm at $(\overline{x},0,0)$
   for $\overline{\lambda}$. Then, there exist $\varepsilon_1>0, \varepsilon_2>0$
   and $\kappa_1>0$ such that for all $(x,a,b)\in\mathbb{B}_{\varepsilon_1}((\overline{x},0,0))$,
   \[
     \mathcal{M}(x,a,b)\cap\mathbb{B}_{\varepsilon_2}(\overline{\lambda})
     \subseteq \mathcal{M}(\overline{x},0,0)
     +\kappa_1\|(x,a,b)-(\overline{x},0,0)\|\mathbb{B}_{\mathbb{X}\times\mathbb{X}\times\mathbb{Y}}.
   \]
   Write $\overline{\eta}=\nabla\!f(\overline{x})$
   and $\overline{y}=g(\overline{x})$. Fix an arbitrary $(\eta,y)\in\mathbb{B}_{\varepsilon_1}((\overline{\eta},\overline{y}))$.
   Pick up an arbitrary $\lambda\in\mathcal{G}_{\overline{x}}(\eta,y)
   \cap\mathbb{B}_{\varepsilon_2}(\overline{\lambda})$.
   It is easy to check that $\lambda\in \mathcal{M}(\overline{x},a,b)$ with
   $a=\overline{\eta}-\eta$ and $b=\overline{y}-y$. Clearly,
   $(\overline{x},a,b)\in\mathbb{B}_{\varepsilon_1}((\overline{x},0,0))$.
   From the last equation, it follows that
   \[
    {\rm dist}(\lambda,\mathcal{G}_{\overline{x}}(\overline{\eta},\overline{y}))
    ={\rm dist}(\lambda,\mathcal{M}(\overline{x},0,0))
    \le \kappa\|(a,b)-(0,0)\|=\|(\eta,y)-(\overline{\eta},\overline{y})\|.
   \]
   By the arbitrariness of $\lambda$ in $\mathcal{G}_{\overline{x}}(\eta,y)
   \cap\mathbb{B}_{\varepsilon_2}(\overline{\lambda})$,
   this shows that $\mathcal{G}_{\overline{x}}$ is calm at $(\overline{\eta},\overline{y})$
   for $\overline{\lambda}$.

   \medskip

   Conversely, suppose that the multifunction $\mathcal{G}_{\overline{x}}$
   is calm at $(\overline{\eta},\overline{y})$ for $\overline{\lambda}$.
   Then there exist $\delta_1>0,\delta_2>0$ and a constant $\nu>0$ such that
   for any $(\eta,y)\in\mathbb{B}_{\delta_1}((\overline{\eta},\overline{y}))$,
   \begin{equation}\label{equa0-Multi}
    \mathcal{G}_{\overline{x}}(\eta,y)\cap\mathbb{B}_{\delta_2}(\overline{\lambda})
    \subseteq \mathcal{G}_{\overline{x}}((\overline{\eta},\overline{y})
    +\nu\|(\eta,y)-(\overline{\eta},\overline{y})\|\mathbb{B}_{\mathbb{X}\times\mathbb{Y}}.
   \end{equation}
   Since the mappings $\nabla\!f, g$ and $\nabla g$ are locally Lipschitz
   continuous at $\overline{x}$, there exist $\widehat{\delta}_1>0$,
   $\widehat{\delta}_2>0$ and a constant $\widehat{\nu}>0$ such that
   for all $x\in\mathbb{B}_{\widehat{\delta}_1}(\overline{x})$
   and $\lambda\in\mathbb{B}_{\widehat{\delta}_2}(\overline{\lambda})$,
   \begin{equation}\label{Lip-fg}
     \|\nabla\!f(x)\!-\!\nabla\!f(\overline{x})\|+
      \|g(x)\!-\!g(\overline{x})\|+\|[\nabla\!g(x)-\nabla\!g(\overline{x})]\lambda\|
      \le \widehat{\nu}\|x-\overline{x}\|.
   \end{equation}
   Set $\varepsilon_1=\min(\frac{\delta_1}{\widehat{\nu}+1},\widehat{\delta}_1)$
   and $\varepsilon_2=\min(\widehat{\delta}_2,\delta_2)$.
   Fix an arbitrary $(x,a,b)\in\mathbb{B}_{\varepsilon_1}((\overline{x},0,0))$.
   Take an arbitrary point $\lambda\in \mathcal{M}(x,a,b)\cap
   \mathbb{B}_{\varepsilon_2}(\overline{\lambda})$.
   Then $\lambda\in\mathcal{G}_{\overline{x}}(\eta,y)$ with $y=g(x)-b$ and
   $\eta=\nabla f(x)\!-\!a+[\nabla\!g(x)\!-\!\nabla\!g(\overline{x})]\lambda$.
   Also, by using the inequality \eqref{Lip-fg} one may obtain that
   $\|(\eta,y)-(\overline{\eta},\overline{y})\|\le \widehat{\nu}\|x-\overline{x}\|+\|(a,b)\|\le\delta_1$.
   Thus, from \eqref{equa0-Multi} it follows that
   \begin{align*}
    {\rm dist}\big(\lambda,\mathcal{G}_{\overline{x}}(\overline{\eta},\overline{y})\big)
    &\le \!\nu\|(\eta,y)-(\overline{\eta},\overline{y})\|
    \le \nu(\widehat{\nu}\|x\!-\overline{x}\|+\!\|(a,b)\|)\\
    &\le\!\sqrt{2}\nu\max(\widehat{\nu},1)\|(x,a,b)\!-(\overline{x},0,0)\|.
   \end{align*}
   By the arbitrariness of $\lambda$ in $\mathcal{M}(x,a,b)\cap\mathbb{B}_{\varepsilon_2}(\overline{\lambda})$,
   $\mathcal{M}$ is calm at $(\overline{x},0,0)$ for $\overline{\lambda}$.
  \end{proof}
  \begin{theorem}\label{Chara2-ULip}
   Let $(\overline{x},\overline{\lambda})$ be a KKT point of \eqref{prob}
   with $(a,b)\!=(0,0)$ and $\mathcal{M}(\overline{x},0,0)\ne\{\overline{\lambda}\}$.
   Then, $\mathcal{S}_{\rm KKT}$ has the strong calmness at the origin
   for $(\overline{x},\overline{\lambda})$ if and only if $\mathcal{X}_{\rm KKT}$
   has the pseudo-isolated calmness at the origin for $\overline{x}$
   and $\mathcal{M}$ has the calmness at $(\overline{x},0,0)$ for $\overline{\lambda}$.
  \end{theorem}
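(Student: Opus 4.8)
The plan is to prove both implications by observing that the strong calmness estimate in Definition \ref{Upper-Lip} is exactly the sum of the pseudo-isolated calmness estimate controlling $\|x-\overline{x}\|$ and a calmness-type estimate controlling ${\rm dist}(\lambda,\mathcal{M}(\overline{x},0,0))$. Thus the whole proof reduces to splitting this additive estimate into its two one-sided pieces (necessity) and recombining the two pieces on a common neighborhood (sufficiency). No graphical-derivative machinery or reduction lemma is needed here; the analytic content of those tools has already been packaged into the two hypotheses.

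For the necessity part, I would start from the strong calmness constants $\delta,\varepsilon,\kappa$. Dropping the nonnegative term ${\rm dist}(\lambda,\mathcal{M}(\overline{x},0,0))$ from the left-hand side of the strong calmness estimate immediately yields $\|x-\overline{x}\|\le\kappa\|(a,b)\|$ for every $(x,\lambda)\in\mathcal{S}_{\rm KKT}(a,b)\cap\mathbb{B}_{\varepsilon}((\overline{x},\overline{\lambda}))$ with $\|(a,b)\|\le\delta$, which is precisely the pseudo-isolated calmness of $\mathcal{X}_{\rm KKT}$. For the calmness of $\mathcal{M}$, I would use that $\lambda\in\mathcal{M}(x,a,b)$ is by definition equivalent to $(x,\lambda)\in\mathcal{S}_{\rm KKT}(a,b)$; hence for $(x,a,b)$ close to $(\overline{x},0,0)$ and $\lambda\in\mathcal{M}(x,a,b)$ close to $\overline{\lambda}$, the pair $(x,\lambda)$ lies in $\mathcal{S}_{\rm KKT}(a,b)\cap\mathbb{B}_{\varepsilon}((\overline{x},\overline{\lambda}))$ once the perturbation radii are taken small enough. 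The strong calmness estimate together with the elementary inequality $\|(a,b)\|\le\|(x-\overline{x},a,b)\|$ then gives ${\rm dist}(\lambda,\mathcal{M}(\overline{x},0,0))\le\kappa\|(a,b)\|\le\kappa\|(x-\overline{x},a,b)\|$, which is exactly the calmness of $\mathcal{M}$ at $(\overline{x},0,0)$ for $\overline{\lambda}$.

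For the sufficiency part, I would take constants $\delta_1,\varepsilon_1,\kappa_1$ from the pseudo-isolated calmness of $\mathcal{X}_{\rm KKT}$ and $\delta_2,\varepsilon_2,\kappa_2$ from the calmness of $\mathcal{M}$, and fix an arbitrary $(x,\lambda)\in\mathcal{S}_{\rm KKT}(a,b)\cap\mathbb{B}_{\varepsilon}((\overline{x},\overline{\lambda}))$ with $\|(a,b)\|\le\delta$ for suitably small $\varepsilon,\delta$. Since $\lambda\in\mathcal{M}(x,a,b)$ and the pair sits in the relevant neighborhoods, pseudo-isolated calmness gives $\|x-\overline{x}\|\le\kappa_1\|(a,b)\|$, while calmness of $\mathcal{M}$ gives ${\rm dist}(\lambda,\mathcal{M}(\overline{x},0,0))\le\kappa_2\|(x-\overline{x},a,b)\|\le\kappa_2(\|x-\overline{x}\|+\|(a,b)\|)$. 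Substituting the first bound into the second yields ${\rm dist}(\lambda,\mathcal{M}(\overline{x},0,0))\le\kappa_2(\kappa_1+1)\|(a,b)\|$, and adding the two estimates produces the strong calmness estimate with $\kappa=\kappa_1+\kappa_2(\kappa_1+1)$.

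The only genuine obstacle is the bookkeeping of radii: I must choose $\varepsilon,\delta$ small enough that a point satisfying the hypotheses of one property automatically meets the neighborhood requirements of the other. Concretely, in the sufficiency part I would set $\varepsilon=\min(\varepsilon_1,\varepsilon_2,\ldots)$ and take $\delta$ correspondingly small so that $(x,\lambda)\in\mathbb{B}_{\varepsilon_1}((\overline{x},\overline{\lambda}))$, $\lambda\in\mathbb{B}_{\varepsilon_2}(\overline{\lambda})$, and $\|(x-\overline{x},a,b)\|$ stays inside the domain where the calmness estimate for $\mathcal{M}$ is valid; an analogous shrinking is needed in the necessity part. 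Beyond this and the harmless passage between the product-space norm $\|(x-\overline{x},a,b)\|$ and $\|x-\overline{x}\|+\|(a,b)\|$, everything is the additive decomposition of the strong calmness estimate.
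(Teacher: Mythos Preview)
Your proposal is correct. The necessity direction (``$\Longrightarrow$'') is essentially identical to the paper's: drop the distance term to get the pseudo-isolated calmness of $\mathcal{X}_{\rm KKT}$, and read off the calmness of $\mathcal{M}$ from the strong calmness estimate together with $\|(a,b)\|\le\|(x-\overline{x},a,b)\|$.

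For the sufficiency direction (``$\Longleftarrow$'') you take a genuinely more direct route than the paper. The paper first invokes Lemma~\ref{icalm-MMGmap} to replace the calmness of $\mathcal{M}$ at $(\overline{x},0,0)$ by the calmness of the auxiliary map $\mathcal{G}_{\overline{x}}$ at $(\nabla\!f(\overline{x}),g(\overline{x}))$; then, for $(x,\lambda)\in\mathcal{S}_{\rm KKT}(a,b)$, it writes $\lambda\in\mathcal{G}_{\overline{x}}(\eta',y')$ with $\eta'=\nabla\!f(x)+\nabla g(x)\lambda-a-\nabla g(\overline{x})\lambda$ and $y'=g(x)-b$, and uses local Lipschitz estimates on $\nabla\!f$, $g$, $\nabla g$ to control $\|(\eta',y')-(\nabla\!f(\overline{x}),g(\overline{x}))\|$ by $\|x-\overline{x}\|+\|(a,b)\|$ before substituting the pseudo-isolated calmness bound. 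Your argument bypasses both the passage to $\mathcal{G}_{\overline{x}}$ and the Lipschitz estimates: since $\lambda\in\mathcal{M}(x,a,b)$ by definition, the calmness of $\mathcal{M}$ applies immediately once the radii are shrunk appropriately. This is shorter and requires no smoothness of $f$ or $g$ beyond what is already encoded in the hypotheses. The paper's detour has the side benefit of making the link to $\mathcal{G}_{\overline{x}}$ explicit (which is needed for Proposition~\ref{calm-MGx}), but for the present theorem your additive decomposition is the cleaner proof.
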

  \begin{proof}
  ``$\Longleftarrow$''. Write $\overline{\eta}=\nabla f(\overline{x})$ and
  $\overline{y}=g(\overline{x})$. By Lemma \ref{icalm-MMGmap},
  $\mathcal{G}_{\overline{x}}$ is calm at $(\overline{\eta},\overline{y})$
  for $\overline{\lambda}$. Then, there exist $\delta'>0,\varepsilon'>0$
  and $\kappa'>0$ such that for all
  $(\eta,y)\in\mathbb{B}_{\varepsilon'}((\overline{\eta},\overline{y}))$,
  \begin{equation}\label{inclusion0}
    \mathcal{G}_{\overline{x}}(\eta,y)\cap\mathbb{B}_{\delta'}(\overline{\lambda})
    \subseteq\mathcal{G}_{\overline{x}}(\overline{\eta},\overline{y})
    +\kappa'\|(\eta,y)-(\overline{\eta},\overline{y})\|\mathbb{B}_{\mathbb{X}\times\mathbb{Y}}.
  \end{equation}
  Since $\mathcal{X}_{\rm KKT}$ has the pseudo-isolated calmness at the origin
  for $\overline{x}$, there exist $\widetilde{\delta}>0,\widetilde{\varepsilon}>0$
  and $\widetilde{\kappa}>0$ such that
  for any $(\widetilde{a},\widetilde{b})\in\mathbb{X}\times\mathbb{Y}$ with
  $\|(\widetilde{a},\widetilde{b})\|\le\widetilde{\delta}$ and
  any $(\widetilde{x},\widetilde{\lambda})\in\mathbb{B}_{\widetilde{\varepsilon}}((\overline{x},\overline{\lambda}))$,
  \begin{equation}\label{temp-estimate}
    \|x-\overline{x}\|\le \widetilde{\kappa}\|(\widetilde{a},\widetilde{b})\|.
  \end{equation}
  From the local Lipschitz continuity of $\nabla\!f(\cdot)$ and $g(\cdot)$,
  there exist $\widehat{\varepsilon}>0$ and constants $c_1>0$ and $c_2>0$ such that for all
  $(x,\lambda)\in\mathbb{B}_{\widehat{\varepsilon}}((\overline{x},\overline{\lambda}))$,
  the following inequalities hold:
  \begin{subnumcases}{}\label{eta-ineq1}
   \| \nabla\!f(x)+\nabla g(x)\lambda-\nabla g(\overline{x})\lambda-
   \nabla\!f(\overline{x})\|\le c_1\|x-\overline{x}\|,\\
   \label{eta-ineq2}
   \|g(x)-g(\overline{x})\|\le c_2\|x-\overline{x}\|,\
   \|g(x)-g(\overline{x})+b\|\le \frac{\varepsilon'}{\sqrt{2}},\,
   \|\lambda-\overline{\lambda}\|\le\delta',\\
   \label{eta-ineq3}
   \|\nabla f(x)+\nabla g(x)\lambda-\nabla g(\overline{x})\lambda
    -\nabla f(\overline{x})-a\|\le \frac{\varepsilon'}{\sqrt{2}}.
  \end{subnumcases}
  Set $\delta=\min(\varepsilon'/2,\widetilde{\delta})$ and
  $\varepsilon=\min(\widetilde{\varepsilon},\widehat{\varepsilon})$.
  Fix an arbitrary $(a,b)\in\mathbb{X}\times\mathbb{Y}$ with
  $\|(a,b)\|\le\delta$. Pick up an arbitrary $(x,\lambda)\in\mathcal{S}_{\rm KKT}(a,b)\cap\mathbb{B}_{\varepsilon}((\overline{x},\overline{\lambda}))$.
  We check that $\lambda\in \mathcal{G}_{\overline{x}}(\eta',y')$
  with $\eta'=\nabla\!f(x)+\nabla g(x)\lambda-a-\nabla g(\overline{x})\lambda$ and
  $y'=g(x)-b$. Clearly, $\lambda\in\mathbb{B}_{\delta'}(\overline{\lambda})$.
  Also, from the inequalities in \eqref{eta-ineq2}-\eqref{eta-ineq3},
  we have $(\eta',y')\in\mathbb{B}_{\varepsilon'}((\overline{\eta},\overline{y}))$.
  By \eqref{inclusion0} it follows that
  \begin{align*}
   {\rm dist}\big(\lambda,\mathcal{M}(\overline{x},0,0))
   &={\rm dist}\big(\lambda,\mathcal{G}_{\overline{x}}(\overline{\eta},\overline{y})\big)
    \le \kappa'\|(\eta',y')-(\nabla\!f(\overline{x}),g(\overline{x}))\|\\
   &=\kappa'\left\|\left(\begin{matrix}
                  \nabla\!f(x)+\nabla g(x)\lambda-a-\nabla g(\overline{x})\lambda-
                  \nabla\!f(\overline{x})\\
                  g(x)-b-g(\overline{x})
                  \end{matrix}\right)\right\|\\
   &\le \kappa'\Big[\sqrt{2c_1^2+2c_2^2}\|x-\overline{x}\|+\|(a,b)\|\Big]\\
   &\le \kappa'\Big[\widetilde{\kappa}\sqrt{2c_1^2+2c_2^2}\|(a,b)\|+\|(a,b)\|\Big]
  \end{align*}
  where the last equality is due to \eqref{temp-estimate} implied by
  $\|(a,b)\|\le\delta\le \widetilde{\delta}$ and
  $(x,\lambda)\in\mathbb{B}_{\widetilde{\varepsilon}}((\overline{x},\overline{\lambda}))$.
  This shows that $\mathcal{S}_{\rm KKT}$ has the strong calmness
  at the origin for $(\overline{x},\overline{\lambda})$.

  \medskip
  \noindent
  ``$\Longrightarrow$''. Suppose that $\mathcal{S}_{\rm KKT}$ has the strong calmness
  at the origin for $(\overline{x},\overline{\lambda})$. Clearly, $\mathcal{X}_{\rm KKT}$
  has the pseudo-isolated calmness at the origin for $\overline{x}$. It suffices to
  prove that $\mathcal{M}$ is calm at $(\overline{x},0,0)$ for $\overline{\lambda}$.
  By the strong calmness of $\mathcal{S}_{\rm KKT}$ at the origin
  for $(\overline{x},\overline{\lambda})$, there exist $\varepsilon>0,\delta>0$
  and $\kappa>0$ such that for any $(a,b)\in\mathbb{X}\times\mathbb{Y}$ with
  $\|(a,b)\|\le\delta$ and any $(x,\lambda)\in\mathcal{S}_{\rm KKT}(a,b)\cap\mathbb{B}_{\varepsilon}((\overline{x},\overline{\lambda}))$,
  the following estimate holds:
  \begin{equation}\label{xlambda}
   \|x-\overline{x}\|+{\rm dist}(\lambda,\mathcal{M}(\overline{x},0,0))\le\kappa\|(a,b)\|.
  \end{equation}
  Set $\varepsilon'=\frac{1}{\sqrt{2}}\min(\varepsilon,\delta)$ and $\delta'=\frac{1}{\sqrt{2}}\varepsilon$.
  Fix an arbitrary $(x,a,b)\in\mathbb{B}_{\varepsilon'}((\overline{x},0,0))$.
  Pick up an arbitrary $\lambda\in\mathcal{M}(x,a,b)\cap\mathbb{B}_{\delta'}(\overline{\lambda})$.
  Clearly, $(x,\lambda)\in\mathcal{S}_{\rm KKT}(a,b)\cap\mathbb{B}_{\varepsilon}((\overline{x},\overline{\lambda}))$.
  By \eqref{xlambda},
  \[
   {\rm dist}(\lambda,\mathcal{M}(\overline{x},0,0))\le\kappa\|(a,b)\|
   \le\kappa\|(x-\overline{x},a,b)\|.
  \]
  This shows that $\mathcal{M}$ is calm at $(\overline{x},0,0)$ for $\overline{\lambda}$.
  Thus, we complete the proof.
  \end{proof}

 \section{Pseudo-isolated calmness of $\mathcal{X}_{\rm KKT}$}\label{sec4}

 We shall focus on the characterizations of the pseudo-isolated
 calmness of $\mathcal{X}_{\rm KKT}$ in terms of the noncriticality of
 the associated multiplier. Along with Theorem \ref{Chara2-ULip},
 some sufficient characterizations are also obtained
 for the strong calmness of $\mathcal{S}_{\rm KKT}$.
 \begin{proposition}\label{necessity1-XKKT}
  Let $(\overline{x},\overline{\lambda})$ be a KKT point of the problem \eqref{prob}
  with $(a,b)\!=\!(0,0)$. If $\mathcal{X}_{\rm KKT}$ has the pseudo-isolated
  calmness at the origin for $\overline{x}$, then $\overline{\lambda}$ is noncritical.
  \end{proposition}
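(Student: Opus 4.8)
The goal is to show that pseudo-isolated calmness of $\mathcal{X}_{\rm KKT}$ at the origin for $\overline{x}$ forces the noncriticality of $\overline{\lambda}$, i.e.\ that the only solution of the system \eqref{noncritical-system} in Proposition \ref{noncritical-prop} of the form $(\xi,v)$ is one with $\xi=0$. My plan is to argue by contrapositive: I assume $\overline{\lambda}$ is \emph{critical}, produce a nonzero $\xi$ solving the noncritical system, and then build from this $\xi$ a family of KKT points $(x_t,\lambda_t)\in\mathcal{S}_{\rm KKT}(a_t,b_t)$ for which $\|x_t-\overline{x}\|$ is of order $t$ while $\|(a_t,b_t)\|=o(t)$, thereby violating the estimate $\|x-\overline{x}\|\le\kappa\|(a,b)\|$ of Definition \ref{Pseudo-icalm}.

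The main device is the graphical-derivative characterization already assembled in the excerpt. By Proposition \ref{noncritical-prop}, criticality of $\overline{\lambda}$ gives a pair $(\xi,v)$ with $\xi\neq 0$ satisfying
\[
\nabla_{xx}^2L(\overline{x},\overline{\lambda})\xi+\nabla g(\overline{x})v=0,\qquad
g'(\overline{x})\xi-\Pi_{\mathcal{K}}'\big(g(\overline{x})+\overline{\lambda};g'(\overline{x})\xi+v\big)=0.
\]
By Lemma \ref{dir-proj} this is exactly the statement that $(\xi,v)$ lies in the tangent/graphical-derivative cone of $\mathcal{N}_{\mathcal{K}}$ at $(g(\overline{x}),\overline{\lambda})$ together with the linearized stationarity equation; equivalently, $(\xi,v)\in\mathcal{T}_{{\rm gph}\mathcal{S}_{\rm KKT}}\big((0,0),(\overline{x},\overline{\lambda})\big)$ in the sense made precise by the characterization of $D\mathcal{S}_{\rm KKT}$ from \cite[Lemma 18 \& 19]{DingSZ17} that is invoked in Proposition \ref{X-icalm}(b). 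Concretely I would use this to obtain sequences $t_k\downarrow 0$ and $(x_k,\lambda_k)\to(\overline{x},\overline{\lambda})$ with $(x_k,\lambda_k)=(\overline{x},\overline{\lambda})+t_k(\xi,v)+o(t_k)$ lying in $\mathcal{S}_{\rm KKT}(a_k,b_k)$, where $(a_k,b_k)$ are the residuals $a_k=\nabla_xL(x_k,\lambda_k)$ and $b_k=g(x_k)-\Pi_{\mathcal{K}}(g(x_k)+\lambda_k)$. The tangency relation forces $(a_k,b_k)=o(t_k)$ while $\|x_k-\overline{x}\|=t_k\|\xi\|+o(t_k)$ stays bounded below by a positive multiple of $t_k$ since $\xi\neq 0$. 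This contradicts pseudo-isolated calmness.

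The step I expect to be the main obstacle is upgrading the first-order (tangent-cone) information into an \emph{honest} sequence of genuine KKT points rather than merely approximate ones: the graphical derivative only supplies $(x_k,\lambda_k)$ approaching along the tangent direction with $\mathrm{dist}\big((a_k,b_k),(0,0)\big)=o(t_k)$, and one must verify that these $(x_k,\lambda_k)$ actually lie in $\mathcal{S}_{\rm KKT}(a_k,b_k)$ for the specific residual perturbations $(a_k,b_k)$ defined above. Here the clean route is to reverse the reasoning of Proposition \ref{X-icalm}(b): since pseudo-isolated calmness of $\mathcal{X}_{\rm KKT}$ implies the estimate \eqref{X-estimate} (take $\lambda=\overline\lambda$) and hence, by Proposition \ref{X-icalm}, the isolated calmness of $\mathcal{X}$ at $(\overline{\lambda},0,0)$ for $\overline{x}$, Lemma \ref{chara-icalm} tells us $D\mathcal{X}((\overline{\lambda},0,0)|\overline{x})(0)=\{0\}$, which is precisely the statement that system \eqref{Xsystem} has only $\xi=0$. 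The noncritical system \eqref{noncritical-system} differs from \eqref{Xsystem} only by the extra term $\nabla g(\overline{x})v$ coupling in the multiplier direction $v$, so the delicate point is to show that the pseudo-isolated estimate—which controls $\|x-\overline{x}\|$ by $\|(a,b)\|$ \emph{uniformly over all admissible multipliers} $\lambda$ near $\overline{\lambda}$—rules out not just the solutions of \eqref{Xsystem} but the full coupled system \eqref{noncritical-system}. I would handle this by feeding the candidate nonzero $\xi$ through the exact tangent-cone identity ${\rm gph}\mathcal{X}_{\rm KKT}$ inherits, extracting the perturbation directions $(\Delta a,\Delta b)$ that the graphical derivative assigns to $(\xi,v)$, and checking they are forced to vanish to first order by the pseudo-isolated estimate, which contradicts $\xi\neq0$.
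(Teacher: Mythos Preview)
Your overall plan is the paper's: take a solution $(\xi,v)$ of \eqref{noncritical-system} with $\xi\neq 0$, set $x_t=\overline{x}+t\xi$, $\lambda_t=\overline{\lambda}+tv$, compute the residuals, show they are $o(t)$, and contradict the pseudo-isolated calmness estimate. So the strategy is right and matches the paper.

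Where you go astray is in the ``main obstacle'' discussion. There is no obstacle: you do not need to invoke the tangent cone of ${\rm gph}\,\mathcal{S}_{\rm KKT}$ to \emph{produce} a sequence, because you can write down $(x_t,\lambda_t)$ explicitly and compute residuals by hand. The second equation of \eqref{noncritical-system}, together with the directional differentiability of $\Pi_{\mathcal{K}}$, gives
\[
\Pi_{\mathcal{K}}(g(x_t)+\lambda_t)=\Pi_{\mathcal{K}}\big(g(\overline{x})+\overline{\lambda}+t(g'(\overline{x})\xi+v)+o(t)\big)
=g(\overline{x})+tg'(\overline{x})\xi+o(t),
\]
so $b_t:=g(x_t)-\Pi_{\mathcal{K}}(g(x_t)+\lambda_t)=o(t)$; the first equation of \eqref{noncritical-system} similarly forces the stationarity residual to be $o(t)$. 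This is exactly how the paper proceeds. Your detour through the isolated calmness of $\mathcal{X}$ and system \eqref{Xsystem} is unnecessary and, as you yourself note, does not close the gap.

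One genuine technical slip: with your definitions $a_k=\nabla_xL(x_k,\lambda_k)$ and $b_k=g(x_k)-\Pi_{\mathcal{K}}(g(x_k)+\lambda_k)$, the pair $(x_k,\lambda_k)$ does \emph{not} lie in $\mathcal{S}_{\rm KKT}(a_k,b_k)$. What is true is that $(x_k,\lambda_k+b_k)\in\mathcal{S}_{\rm KKT}(a_k+\nabla g(x_k)b_k,\,b_k)$, because $\lambda_k+b_k=g(x_k)+\lambda_k-\Pi_{\mathcal{K}}(g(x_k)+\lambda_k)\in\mathcal{N}_{\mathcal{K}}(g(x_k)-b_k)$. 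Since $b_k=o(t_k)$ this shift is harmless for the contradiction. The paper hides the shift by introducing an auxiliary map $\widetilde{\mathcal{S}}_{\rm KKT}(a,b):=\{(x,\lambda):\nabla f(x)+\nabla g(x)(\lambda+b)=a,\ g(x)-\Pi_{\mathcal{K}}(g(x)+\lambda)=b\}$, for which $(x_t,\lambda_t)$ is a member by construction, and transferring the pseudo-isolated calmness estimate to $\widetilde{\mathcal{S}}_{\rm KKT}$ via the relation $(x,\lambda)\in\mathcal{S}_{\rm KKT}(a,b)\Leftrightarrow(x,\lambda-b)\in\widetilde{\mathcal{S}}_{\rm KKT}(a,b)$. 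Either bookkeeping device works; just be aware that your residual formulas as written do not place $(x_k,\lambda_k)$ in $\mathcal{S}_{\rm KKT}(a_k,b_k)$.
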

 \begin{proof}
  Let $\widetilde{\mathcal{S}}_{\rm KKT}(a,b)\!:=\big\{(x,\lambda)\ |\
  \nabla\!f(x)+\!\nabla g(x)(\lambda\!+\!b)=a,g(x)-\!\Pi_{\mathcal{K}}(g(x)\!+\!\lambda)=b\big\}$
  for $(a,b)\in\mathbb{X}\times\mathbb{Y}$. One may check that
  $(x,\lambda)\in\mathcal{S}_{\rm KKT}(a,b)$ iff
  $(x,\lambda-b)\in\widetilde{\mathcal{S}}_{\rm KKT}(a,b)$.
  By the given assumption and Definition \ref{Pseudo-icalm},
  there exist $\varepsilon'>0,\delta'>0$ and $\kappa'>0$ such that
  for any $(a,b)\in\mathbb{X}\times\mathbb{Y}$ with $\|(a,b)\|\le\delta'$
  and any $(x,\lambda)\in\widetilde{\mathcal{S}}_{\rm KKT}(a,b)\cap
  \mathbb{B}_{\varepsilon'}((\overline{x},\overline{\lambda}))$,
  \begin{equation}\label{estimation}
    \|x-\overline{x}\|\le \kappa'\|(a,b)\|.
  \end{equation}
  Let $(\xi^*,v^*)\in\mathbb{X}\times\mathbb{Y}$ be an arbitrary solution
  of system \eqref{noncritical-system}. For any sufficiently small $t>0$,
  define $x_t:=\overline{x}+t\xi^*$ and $\lambda_t:=\overline{\lambda}+tv^*$.
  By the directional differentiability of $\Pi_{\mathcal{K}}$,
  \begin{align*}
  \Pi_{\mathcal{K}}(g(x_t)+\lambda_t)
  &=\Pi_{\mathcal{K}}\big(g(\overline{x})+tg'(\overline{x})\xi^*+\overline{\lambda}+tv^*+o(t)\big)\\
  &=\Pi_{\mathcal{K}}(g(\overline{x})+\overline{\lambda})
    +t\Pi_{\mathcal{K}}'(g(\overline{x})\!+\!\overline{\lambda};g'(\overline{x})\xi^*+v^*)+o(t)\\
  &=g(\overline{x})+tg'(\overline{x})\xi^*+o(t),
  \end{align*}
  where the last equality is due to the fact that $(\xi^*,v^*)$ is a solution
  of the system \eqref{noncritical-system} and $g(\overline{x})=\Pi_{\mathcal{K}}(g(\overline{x})+\overline{\lambda})$.
  Together with $g(x_t)=g(\overline{x})+tg'(\overline{x})\xi^*+o(t)$,
  it follows that
  \[
    b_t:=g(x_t)-\Pi_{\mathcal{K}}(g(x_t)+\lambda_t)=o(t).
  \]
  In addition, from $\nabla f(\overline{x})+\nabla g(\overline{x})\overline{\lambda}=0$
  and $\nabla^2f(\overline{x})\xi^*+\nabla(\nabla g(\cdot)\overline{\lambda})(\overline{x})\xi^*
  =\nabla_{xx}^2L(\overline{x},\overline{\lambda})\xi^*$,
  \begin{align*}
   a_t&:=\nabla f(x_t)+\nabla g(x_t)(\lambda_t+b_t)\\
   &=\nabla f(\overline{x})+t\nabla^2f(\overline{x})\xi^*
    +(\nabla g(\overline{x})+t\nabla(\nabla g(\cdot))(\overline{x})\xi^*+o(t))(\overline{\lambda}+tv^*+b_t)+o(t)\\
   &=t\nabla_{xx}^2L(\overline{x},\overline{\lambda})\xi^*
     +t\nabla g(\overline{x})v^*+o(t)=o(t)
  \end{align*}
  where the last equality is using the fact that $(\xi^*,v^*)$ is
  the solution of \eqref{noncritical-system}. The last two equations show that,
  for all sufficiently small $t>0$, $(x_t,\lambda_t)\in\widetilde{\mathcal{S}}_{\rm KKT}(a_t,b_t)
  \cap\mathbb{B}_{\varepsilon'}(\overline{x},\overline{\lambda})$
  with $\|(a_t,b_t)\|\le\delta'$.
  From \eqref{estimation}, for all sufficiently small $t>0$, it holds that
  \[
    \|\xi^*\|\le \kappa'\|(a_t,b_t)\|/t\to 0.
  \]
  This implies $\xi^*=0$. By Proposition \ref{noncritical-prop},
  $\overline{\lambda}$ is noncritical for \eqref{prob}
  with $(a,b)=(0,0)$.
 \end{proof}

 Proposition \ref{necessity1-XKKT} shows that the pseudo-isolated calmness
 of $\mathcal{X}_{\rm KKT}$ implies the noncriticality of the associated
 multiplier. However, its converse conclusion generally does not
 hold unless $\mathcal{K}$ is polyhedral.
 Motivated by \cite[Theorem 3.2]{Cui16} and \cite[Theorem 3.3]{Zhang17},
 we establish the converse conclusion of Proposition \ref{necessity1-XKKT}
 under an additional condition.
 \begin{proposition}\label{sufficient-XKKT}
   Let $(\overline{x},\overline{\lambda})$ be a KKT point of \eqref{prob}
   with $(a,b)=(0,0)$. Define the sets
   \begin{equation}\label{Sigma}
   \begin{split}
    \Sigma(\overline{x},\overline{\lambda}):=\left\{(\xi,\zeta)\in\mathbb{X}\times\mathbb{Y}\!:\
    \nabla_{xx}^2L(\overline{x},\overline{\lambda})\xi
    +\nabla g(\overline{x})\big[\zeta+\frac{1}{2}\nabla\Upsilon(g'(\overline{x})\xi)\big]=0\right\},\\
    \Gamma(\overline{x},\overline{\lambda}):=
    \Big\{(\xi,\zeta)\in\mathbb{X}\times\mathbb{Y}\!:\ g'(\overline{x})\xi
    \in\mathcal{C}_{\mathcal{K}}(g(\overline{x}),\overline{\lambda}),
    \zeta\in[\mathcal{C}_{\mathcal{K}}(g(\overline{x}),\overline{\lambda})]^{\circ}\Big\}.\qquad
    \end{split}
    \end{equation}
   Suppose that $\langle g'(\overline{x})\xi,\zeta\rangle\ge0$ for any $(\xi,\zeta)\in\Sigma(\overline{x},\overline{\lambda})\cap\Gamma(\overline{x},\overline{\lambda})$
   and $\nabla g(\overline{x})[\mathcal{C}_{\mathcal{K}}(g(\overline{x}),\overline{\lambda})]^{\circ}$
   is closed. Then, under the noncriticality of the Lagrange multiplier $\overline{\lambda}$
   for the problem \eqref{prob} with $(a,b)=(0,0)$, $\mathcal{X}_{\rm KKT}$
   has the pseudo-isolated calmness at the origin for $\overline{x}$.
  \end{proposition}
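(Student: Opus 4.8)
The plan is to argue by contradiction through a blow-up (normalization) argument, turning a failure of pseudo-isolated calmness into a nonzero solution of the noncritical system \eqref{noncritical-system}, which contradicts the assumed noncriticality of $\overline{\lambda}$ via Proposition \ref{noncritical-prop}; this is the converse direction to Proposition \ref{necessity1-XKKT}. Concretely, I would suppose that $\mathcal{X}_{\rm KKT}$ fails to have the pseudo-isolated calmness at the origin for $\overline{x}$. By Definition \ref{Pseudo-icalm} there are sequences $(a_k,b_k)\to(0,0)$ and $(x_k,\lambda_k)\in\mathcal{S}_{\rm KKT}(a_k,b_k)$ with $(x_k,\lambda_k)\to(\overline{x},\overline{\lambda})$ and $\|x_k-\overline{x}\|>k\|(a_k,b_k)\|$. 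Set $t_k:=\|x_k-\overline{x}\|\downarrow 0$ and $\xi_k:=(x_k-\overline{x})/t_k$; passing to a subsequence, $\xi_k\to\xi^*$ with $\|\xi^*\|=1$. Note that $\|(a_k,b_k)\|=o(t_k)$ and, writing $\Delta\lambda_k:=\lambda_k-\overline{\lambda}$ and $w_k:=g(x_k)-b_k$, that $\|\Delta\lambda_k\|\to 0$. The goal is to manufacture $(\xi^*,v^*)$ solving \eqref{noncritical-system}.

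Next I would extract two limit relations. From the projection equation $\lambda_k\in\mathcal{N}_{\mathcal{K}}(w_k)$, the expansion $w_k-g(\overline{x})=t_kg'(\overline{x})\xi_k+o(t_k)$, and the monotonicity of $\mathcal{N}_{\mathcal{K}}$ applied to the pairs $(w_k,\lambda_k)$ and $(g(\overline{x}),\overline{\lambda})$, I would obtain both $\langle\overline{\lambda},g'(\overline{x})\xi^*\rangle=0$ and $g'(\overline{x})\xi^*\in\mathcal{T}_{\mathcal{K}}(g(\overline{x}))$, hence $g'(\overline{x})\xi^*\in\mathcal{C}_{\mathcal{K}}(g(\overline{x}),\overline{\lambda})$. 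From the stationarity equation $\nabla\!f(x_k)+\nabla g(x_k)\lambda_k=a_k$, subtracting the reference identity $\nabla\!f(\overline{x})+\nabla g(\overline{x})\overline{\lambda}=0$, Taylor-expanding and dividing by $t_k$, I would get, with $v_k:=\Delta\lambda_k/t_k$,
\[
 \nabla_{xx}^2L(\overline{x},\overline{\lambda})\xi_k+\nabla g(\overline{x})v_k\to 0,
\]
the crucial point being that the cross term $[\nabla g(x_k)-\nabla g(\overline{x})]\Delta\lambda_k/t_k$ vanishes in the limit since its norm is $O(\|\Delta\lambda_k\|)=o(1)$. Thus $\nabla g(\overline{x})v_k\to-\nabla_{xx}^2L(\overline{x},\overline{\lambda})\xi^*$.

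The main obstacle is that $\{v_k\}$ need not be bounded, precisely because $\mathcal{M}(\overline{x},0,0)$ is not a singleton, so one cannot pass to a limit in $v_k$ directly. Here I would set $\zeta_k:=v_k-\tfrac12\nabla\Upsilon(g'(\overline{x})\xi_k)$ and prove the key claim that $\mathrm{dist}\big(\zeta_k,[\mathcal{C}_{\mathcal{K}}(g(\overline{x}),\overline{\lambda})]^{\circ}\big)\to 0$. This is where the $C^2$-cone reduction of Lemma \ref{lemma-reduction} enters: writing $\lambda_k=\nabla\Xi(w_k)\mu_k$ with $\mu_k\to\overline{\mu}$ and expanding $\nabla\Xi$ to second order, the term $\tfrac12\nabla\Upsilon(g'(\overline{x})\xi_k)$ is exactly the second-order (Hessian) contribution of the reduction encoded in \eqref{Upsilon}, so that $\zeta_k$ collapses to the first-order normal part $\nabla\Xi(g(\overline{x}))(\mu_k-\overline{\mu})/t_k$ up to an $o(1)$ term, which accumulates in $[\mathcal{C}_{\mathcal{K}}(g(\overline{x}),\overline{\lambda})]^{\circ}$. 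Granting the claim, $\nabla g(\overline{x})\zeta_k$ stays within $o(1)$ of $\nabla g(\overline{x})[\mathcal{C}_{\mathcal{K}}(g(\overline{x}),\overline{\lambda})]^{\circ}$ and converges to $p^*:=-\nabla_{xx}^2L(\overline{x},\overline{\lambda})\xi^*-\tfrac12\nabla g(\overline{x})\nabla\Upsilon(g'(\overline{x})\xi^*)$; the closedness hypothesis on $\nabla g(\overline{x})[\mathcal{C}_{\mathcal{K}}(g(\overline{x}),\overline{\lambda})]^{\circ}$ then furnishes some $\zeta^*\in[\mathcal{C}_{\mathcal{K}}(g(\overline{x}),\overline{\lambda})]^{\circ}$ with $\nabla g(\overline{x})\zeta^*=p^*$. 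Together with the first relation this gives $(\xi^*,\zeta^*)\in\Sigma(\overline{x},\overline{\lambda})\cap\Gamma(\overline{x},\overline{\lambda})$.

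Finally I would close the argument. Polarity of $g'(\overline{x})\xi^*\in\mathcal{C}_{\mathcal{K}}(g(\overline{x}),\overline{\lambda})$ and $\zeta^*\in[\mathcal{C}_{\mathcal{K}}(g(\overline{x}),\overline{\lambda})]^{\circ}$ forces $\langle g'(\overline{x})\xi^*,\zeta^*\rangle\le 0$, while the hypothesis $\langle g'(\overline{x})\xi,\zeta\rangle\ge 0$ on $\Sigma\cap\Gamma$ forces $\ge 0$; hence $\langle g'(\overline{x})\xi^*,\zeta^*\rangle=0$. Using the Euler identity $\langle h,\nabla\Upsilon(h)\rangle=2\Upsilon(h)$ for the quadratic $\Upsilon$ together with the characterization in Lemma \ref{dir-proj}, one then checks that $v^*:=\zeta^*+\tfrac12\nabla\Upsilon(g'(\overline{x})\xi^*)$ makes $(\xi^*,v^*)$ a solution of \eqref{noncritical-system}. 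By Proposition \ref{noncritical-prop} and the noncriticality of $\overline{\lambda}$, this forces $\xi^*=0$, contradicting $\|\xi^*\|=1$. I expect the distance claim in the third paragraph to be the genuinely technical part, since it is exactly what replaces the boundedness of the multiplier directions that is automatic only in the polyhedral case, and it is the precise point at which both the second-order cancellation and the closedness condition become indispensable.
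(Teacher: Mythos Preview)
Your proposal is correct and follows essentially the same route as the paper: a contradiction/blow-up argument producing $\xi^*$ with $\|\xi^*\|=1$, the $C^2$-cone reduction of Lemma~\ref{lemma-reduction} to split $(\lambda_k-\overline{\lambda})/t_k$ into the second-order curvature part $\tfrac12\nabla\Upsilon(g'(\overline{x})\xi^*)$ and a remainder $\nabla\Xi(\overline{y})(\mu_k-\overline{\mu})/t_k\in[\mathcal{C}_{\mathcal{K}}(g(\overline{x}),\overline{\lambda})]^{\circ}$, the closedness hypothesis to obtain $\zeta^*$ (the paper's $\eta$), the sign condition on $\Sigma\cap\Gamma$ to force $\langle g'(\overline{x})\xi^*,\zeta^*\rangle=0$, and Lemma~\ref{dir-proj} to assemble a nontrivial solution of \eqref{noncritical-system}. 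The only cosmetic difference is that the paper applies $\nabla g(\overline{x})$ before passing to the limit (working with $\nabla g(\overline{x})(\lambda^k-\overline{\lambda})/t_k$ throughout) rather than first establishing your distance claim for $\zeta_k$, but the content is identical.
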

  \begin{proof}
  Suppose on the contradiction that $\mathcal{X}_{\rm KKT}$ does not
  have the pseudo-isolated calmness at the origin for $\overline{x}$.
  By Definition \ref{Pseudo-icalm}, there exist sequences $\{(a^k,b^k)\}\to 0$
  and $\{(x^k,\lambda^k)\}\to(\overline{x},\overline{\lambda})$ with
   $(x^k,\lambda^k)\in\mathcal{S}_{\rm KKT}(a^k,b^k)$ for each $k\in\mathbb{N}$
   such that
   \(
     \frac{\|(a^k,b^k)\|}{\|x^k-\overline{x}\|}\to 0
   \)
  as $k\to\infty$. For convenience, we write $t_k:=\|x^k-\overline{x}\|$ for each $k$,
  and assume (if necessary taking a subsequence) that $\frac{x^k-\overline{x}}{t_k}\to\xi$
  for some $\xi\in\mathbb{X}$ with $\|\xi\|=1$ when $k\to\infty$.
  From $(x^k,\lambda^k)\in\mathcal{S}_{\rm KKT}(a^k,b^k)$
  and the definition of $\mathcal{S}_{\rm KKT}$, it immediately follows that
  \begin{equation}\label{xk-equa}
   \nabla f(x^k)+\nabla g(x^k)\lambda^k=a^k\ \ {\rm and}\ \
   \lambda^k\in\mathcal{N}_{\mathcal{K}}(g(x^k)-b^k).
  \end{equation}
  By using the twice continuous differentiability of $f$ and $g$,
   for each $k\in\mathbb{N}$ we have
  \begin{align*}
    \nabla f(x^k)&=\nabla f(\overline{x})+\nabla^2f(\overline{x})(x^k-\overline{x})+o(t_k),\\
    g'(x^k)&=g'(\overline{x})+g''(\overline{x})(x^k-\overline{x})+o(t_k).
  \end{align*}
  Together with the first equality in \eqref{xk-equa} and
  $\nabla f(\overline{x})+\nabla g(\overline{x})\overline{\lambda}=0$,
  we immediately obtain
  \begin{equation}\label{ak-xk}
   a^k=\nabla^2f(\overline{x})(x^k-\overline{x})
    +\nabla g(\overline{x})(\lambda^k-\overline{\lambda})
    +\big[g''(\overline{x})(x^k-\overline{x})]^*\lambda^k +o(t_k).
  \end{equation}
  By dividing the two sides of \eqref{ak-xk} with $t_k$
  and taking the limit $k\to\infty$, we have
  \begin{equation}\label{equa-ak0}
    w:=\lim_{k\to\infty}\frac{\nabla g(\overline{x})(\lambda^k-\overline{\lambda})}{t_k}
    =-\nabla_{xx}^2L(\overline{x},\overline{\lambda})\xi
  \end{equation}
  where the equality is using $\lim_{k\to\infty}{a^k}/{t_k}=0$
  and $\lim_{k\to\infty}\lambda^k=\overline{\lambda}$.
   Write $\overline{y}:=g(\overline{x})$. Since
  $g(x^k)=\overline{y}+t_kg'(\overline{x})\xi+o(t_k)$
  and $b^k=o(t_k)$, it then follows that
  \[
    g(x^k)-b^k=\overline{y}+t_ku^k\ \ {\rm with}\ \
    u^k:=g'(\overline{x})\xi+o(t_k)/t_k.
  \]
  Since $\lambda^k\in\mathcal{N}_{\mathcal{K}}(\overline{y}+t_ku^k)$ for each $k$,
  by the conic reducibility of $\mathcal{K}$ and Lemma \ref{lemma-reduction},
  for each sufficiently large $k$ there exist $\mu^k\!\in\mathcal{N}_D(\Xi(\overline{y}+t_ku^k))$
  such that $\lambda^k=\nabla\Xi(\overline{y}+t_ku^k)\mu^k$.
  Notice that $\nabla\Xi(\overline{y})\!:\mathbb{Y}\to\mathbb{Z}$ is injective.
  From $\lambda^k=\nabla\Xi(\overline{y}+t_ku^k)\mu^k$ and
  the convergence of $\lambda^k$, the sequence $\{\mu^k\}$ is bounded.
  We may assume (taking a subsequence if necessary) that $\mu^k\to\overline{\mu}$.
  Taking the limit $k\to\infty$ to the equality
  $\lambda^k=\nabla\Xi(\overline{y}+t_ku^k)\mu^k$ yields that
  $\overline{\lambda}=\nabla\Xi(\overline{y})\overline{\mu}$.
  Next, we proceed the arguments by three steps as shown below.

  \medskip
  \noindent
  {\bf Step 1: $g'(\overline{x})\xi\in\mathcal{C}_{\mathcal{K}}(g(\overline{x}),\overline{\lambda})$}.
  Since $g(x^k)-b^k\in\mathcal{K}$ and $g(\overline{x})\in\mathcal{K}$,
  we immediately have
  \begin{align*}
   \mathcal{T}_{\mathcal{K}}(g(\overline{x}))\ni
   g(x^k)-b^k-g(\overline{x})
   =g'(\overline{x})(x^k-\overline{x})+o(t_k),
  \end{align*}
  which implies $g'(\overline{x})\xi\in\mathcal{T}_{\mathcal{K}}(g(\overline{x}))$.
  Since
  \(
   0=\langle \mu^k,\Xi(\overline{y}+t_ku^k)\rangle
   =\langle\mu^k,t_k\Xi'(\overline{y})u^k+o(t_k)\rangle
  \)
  for all sufficiently large $k$, dividing the two sides of this equality
  by $t_k$ and taking the limit $k\to\infty$ yields that
  \(
    0=\langle \nabla\Xi(\overline{y})\overline{\mu},g'(\overline{x})\xi\rangle
    =\langle\overline{\lambda},g'(\overline{x})\xi\rangle.
  \)
 Together with $g'(\overline{x})\xi\in\mathcal{T}_{\mathcal{K}}(g(\overline{x}))$,
 we obtain that $g'(\overline{x})\xi\in\mathcal{C}_{\mathcal{K}}(g(\overline{x}),\overline{\lambda})$.
 The proof of this step is completed.

  \medskip
  \noindent
  {\bf Step 2: $w\!=\nabla\!g(\overline{x})v$ for some $v\in\mathbb{Y}$ with
  $v-\frac{1}{2}\nabla\Upsilon(g'(\overline{x})\xi)
  \in[\mathcal{C}_{\mathcal{K}}(g(\overline{x}),\overline{\lambda})]^{\circ}$.}
  Note that
  \begin{align*}
   \nabla g(\overline{x})\frac{\lambda^k-\overline{\lambda}}{t_k}
   &=\nabla g(\overline{x})\frac{\big(\nabla\Xi(\overline{y}+t_ku^k)-\nabla\Xi(\overline{y})\big)\mu^k
    +\nabla\Xi(\overline{y})(\mu^k-\overline{\mu})}{t_k}\nonumber\\
   &=\nabla g(\overline{x})\nabla(\nabla\Xi(\cdot)\mu^k)(\overline{y})u^k
     +\nabla g(\overline{x})\nabla\Xi(\overline{y})\frac{\mu^k-\overline{\mu}}{t_k}.
  \end{align*}
  By recalling $w=\lim_{k\to\infty}\frac{\nabla g(\overline{x})(\lambda^k-\overline{\lambda})}{t_k}$
  and taking the limit $k\to\infty$ to the both sides,
  \begin{equation}\label{temp-ak1}
    \lim_{k\to\infty}\nabla g(\overline{x})\nabla\Xi(\overline{y})\frac{\mu^k-\overline{\mu}}{t_k}
    =w-\nabla g(\overline{x})\nabla\big(\nabla\Xi(\cdot)\overline{\mu}\big)(\overline{y})(g'(\overline{x})\xi).
  \end{equation}
  We next argue that
  \(
   \nabla\Xi(\overline{y})\frac{\mu^k-\overline{\mu}}{t_k}
  \in [\mathcal{C}_{\mathcal{K}}(\overline{y},\overline{\lambda})]^{\circ}
  \)
  for each $k\in\mathbb{N}$. Fix an arbitrary $k\in\mathbb{N}$.
  Take an arbitrary $d\in\mathcal{C}_{\mathcal{K}}(\overline{y},\overline{\lambda})$.
  Then $\langle d,\overline{\lambda}\rangle=0$ by recalling that
  $\mathcal{C}_{\mathcal{K}}(\overline{y},\overline{\lambda})
  =\mathcal{T}_{\mathcal{K}}(\overline{y})\cap[\![\overline{\lambda}]\!]^{\perp}$.
  Also, by Lemma \ref{lemma-reduction}, $\mathcal{K}\cap\mathcal{Y}=\Xi^{-1}(D)$.
  Together with the surjectivity of $\Xi'(\overline{y})\!:\mathbb{Y}\to\mathbb{Z}$
  and \cite[Exercise 6.7]{RW98},
  $d\in\mathcal{T}_{\mathcal{K}}(\overline{y})=[\Xi'(\overline{y})]^{-1}\mathcal{T}_{D}(\Xi(\overline{y}))
  =[\Xi'(\overline{y})]^{-1}D$, and consequently
  \[
   \langle d, \nabla\Xi(\overline{y})(\mu^k\!-\overline{\mu})\rangle
   =\langle \Xi'(\overline{y})d,\mu^k\rangle
    -\langle d,\overline{\lambda}\rangle
   =\langle \Xi'(\overline{y})d,\mu^k\rangle\le 0,
  \]
  where the inequality is using $\Xi'(\overline{y})d\in D$
  and $\mu^k\in D^\circ$. So, the stated inclusion holds.
  From the given assumption, the set $\nabla\!g(\overline{x})[\mathcal{C}_{\mathcal{K}}(\overline{y},\overline{\lambda})]^{\circ}$
  is closed. Then, from \eqref{temp-ak1} there exists $\eta\in[\mathcal{C}_{\mathcal{K}}(\overline{y},\overline{\lambda})]^{\circ}$
  such that $\lim_{k\to\infty}\nabla g(\overline{x})\nabla\Xi(\overline{y})\frac{\mu^k-\overline{\mu}}{t_k}
  =\nabla g(\overline{x})\eta$. Along with \eqref{temp-ak1},
  \[
    w=\nabla g(\overline{x})v\ \ {\rm with}\ \
    v-\nabla\big(\nabla\Xi(\cdot)\overline{\mu}\big)(\overline{y})(g'(\overline{x})\xi)
    =\eta\in[\mathcal{C}_{\mathcal{K}}(\overline{y},\overline{\lambda})]^{\circ}.
  \]
  Recall that $\Upsilon(h)=\langle\overline{\mu},\Xi''(\overline{y})(h,h)\rangle$
  for $h\in\mathcal{C}_{\mathcal{K}}(\overline{y},\overline{\lambda})$.
  From $g'(\overline{x})\xi\in\mathcal{C}_{\mathcal{K}}(\overline{y},\overline{\lambda})$,
  it follows that $\nabla\big(\nabla\Xi(\cdot)\overline{\mu}\big)(\overline{y})(g'(\overline{x})\xi)=
  \frac{1}{2}\nabla\Upsilon(g'(\overline{x})\xi)$.
  Thus, we complete the proof of this step.

  \medskip
  \noindent
  {\bf Step 3: $\langle g'(\overline{x})\xi,v\rangle=\Upsilon(g'(\overline{x})\xi)$.}
  Notice that $v=\frac{1}{2}\nabla\Upsilon(g'(\overline{x})\xi)+\eta$
  with $\eta\in[\mathcal{C}_{\mathcal{K}}(\overline{y},\overline{\lambda})]^{\circ}$
  by Step 2. Also, from Step 2 and equation \eqref{equa-ak0},
  $\nabla_{xx}^2L(\overline{x},\overline{\lambda})\xi+\nabla\!g(\overline{x})v=0$,
  which means that $(\xi,\eta)\in\Sigma(\overline{x},\overline{\lambda})$.
  Recall from Step 1 that $g'(\overline{x})\xi\in\mathcal{C}_{\mathcal{K}}(\overline{y},\overline{\lambda})$.
  Then, $\langle g'(\overline{x})\xi,\eta\rangle\le 0$. Together with
  the given assumption, we also have $\langle g'(\overline{x})\xi,\eta\rangle\ge 0$.
  Thus, $\langle \eta,g'(\overline{x})\xi\rangle=0$,
  and consequently $\langle g'(\overline{x})\xi,v\rangle=\langle g'(\overline{x})\xi,
  \frac{1}{2}\nabla\Upsilon(g'(\overline{x})\xi)\rangle=\Upsilon(g'(\overline{x})\xi)$.

  \medskip

  So far, we have established that
  $g'(\overline{x})\xi\in\mathcal{C}_{\mathcal{K}}(\overline{y},\overline{\lambda})$,
  $v-\!\frac{1}{2}\nabla\Upsilon(g'(\overline{x})\xi)
  \in[\mathcal{C}_{\mathcal{K}}(\overline{y},\overline{\lambda})]^{\circ}$
  and $\langle g'(\overline{x})\xi,v\rangle=\Upsilon(g'(\overline{x})\xi)$.
  By Lemma \ref{dir-proj}, this is equivalent to saying that
  \[
    g'(\overline{x})\xi-\Pi_{\mathcal{K}}'(g(\overline{x})+\overline{\lambda};
    g'(\overline{x})\xi+v)=0.
  \]
  Together with $\nabla_{xx}^2L(\overline{x},\overline{\lambda})\xi+\nabla\!g(\overline{x})v=0$,
  it follows that $(\xi,v)$ satisfies the system \eqref{noncritical-system}.
  Since $\overline{\lambda}$ is noncritical, we obtain $\xi=0$,
  a contradiction to $\|\xi\|=1$.
 \end{proof}

  From Proposition \ref{calm-MGx} and Lemma \ref{icalm-MMGmap},
  the condition ${\rm ri}(\mathcal{N}_{\mathcal{K}}(g(\overline{x})))
  \cap\mathcal{H}(\nabla\!f(\overline{x}))\ne\emptyset$ or equivalently
  ${\rm ri}(\mathcal{N}_{\mathcal{K}}(g(\overline{x})))\cap\mathcal{M}(\overline{x},0,0)\ne\emptyset$
  implies the calmness of $\mathcal{M}$ at $(\overline{x},0,0)$ for
  $\overline{\lambda}\in\mathcal{M}(\overline{x},0,0)$. While the following
  lemma states that if the system $g(x)\in\mathcal{K}$ is metrically subregular
  at $\overline{x}$, this condition also implies the closedness
  of $\nabla\!g(\overline{x})[\mathcal{C}_{\mathcal{K}}(g(\overline{x}),\overline{\lambda})]^{\circ}$.
  \begin{lemma}\label{closed-lemma}
   Let $(\overline{x},\overline{\lambda})$ be a KKT point of the problem \eqref{prob}
   with $(a,b)=(0,0)$. Suppose that the multifunction $\mathcal{F}(\cdot)\!:=g(\cdot)-\mathcal{K}$
   is metrically subregular at $\overline{x}$ for the origin.
   If ${\rm ri}(\mathcal{N}_{\mathcal{K}}(g(\overline{x})))\cap\mathcal{M}(\overline{x},0,0)\ne\emptyset$,
   then the radial cone $\mathcal{R}_{\mathcal{N}_{\Omega}(\overline{x})}(-\nabla\!f(\overline{x}))$
   with $\Omega:=g^{-1}(\mathcal{K})$ is closed, which in turn implies the closedness of the set
   $\nabla g(\overline{x})[\mathcal{C}_{\mathcal{K}}(g(\overline{x}),\overline{\lambda})]^{\circ}$.
  \end{lemma}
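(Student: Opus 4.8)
The plan is to reduce the whole statement to the normal-cone chain rule and then exploit the relative-interior hypothesis. First I would record that, since $\mathcal{F}(\cdot)=g(\cdot)-\mathcal{K}$ is metrically subregular at $\overline{x}$ for the origin, the standard chain rule for limiting normal cones valid under metric subregularity yields $\mathcal{N}_{\Omega}(\overline{x})=\nabla g(\overline{x})\mathcal{N}_{\mathcal{K}}(\overline{y})$ with $\overline{y}=g(\overline{x})$; in particular $\mathcal{N}_{\Omega}(\overline{x})$ is a convex cone. From the KKT condition $\nabla\!f(\overline{x})+\nabla g(\overline{x})\overline{\lambda}=0$ we have $-\nabla\!f(\overline{x})=\nabla g(\overline{x})\overline{\lambda}\in\mathcal{N}_{\Omega}(\overline{x})$, so the radial cone under consideration is well defined, and the two hypotheses feed the two halves of the proof: metric subregularity supplies the representation of $\mathcal{N}_{\Omega}(\overline{x})$, and the relative-interior condition will control the location of $-\nabla\!f(\overline{x})$ inside it.

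The heart of the argument is to show that the relative-interior hypothesis places $-\nabla\!f(\overline{x})$ in the relative interior of $\mathcal{N}_{\Omega}(\overline{x})$. By the classical linear-image rule for relative interiors of convex sets, ${\rm ri}(\mathcal{N}_{\Omega}(\overline{x}))={\rm ri}(\nabla g(\overline{x})\mathcal{N}_{\mathcal{K}}(\overline{y}))=\nabla g(\overline{x})({\rm ri}\,\mathcal{N}_{\mathcal{K}}(\overline{y}))$. The hypothesis furnishes some $\lambda^{*}\in{\rm ri}(\mathcal{N}_{\mathcal{K}}(\overline{y}))\cap\mathcal{M}(\overline{x},0,0)$; since $\lambda^{*}\in\mathcal{M}(\overline{x},0,0)$ gives $\nabla g(\overline{x})\lambda^{*}=-\nabla\!f(\overline{x})$, we conclude $-\nabla\!f(\overline{x})\in{\rm ri}(\mathcal{N}_{\Omega}(\overline{x}))$. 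Then I would invoke the elementary fact that for a convex set $C$ and $\overline{v}\in{\rm ri}(C)$ the radial cone $\mathcal{R}_{C}(\overline{v})$ coincides with the linear subspace parallel to ${\rm aff}(C)$, and is therefore closed. Applying this with $C=\mathcal{N}_{\Omega}(\overline{x})$ and $\overline{v}=-\nabla\!f(\overline{x})$ proves that $\mathcal{R}_{\mathcal{N}_{\Omega}(\overline{x})}(-\nabla\!f(\overline{x}))$ is closed.

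It remains to transfer this closedness to $\nabla g(\overline{x})[\mathcal{C}_{\mathcal{K}}(\overline{y},\overline{\lambda})]^{\circ}$. Using $\mathcal{N}_{\Omega}(\overline{x})=\nabla g(\overline{x})\mathcal{N}_{\mathcal{K}}(\overline{y})$ and $-\nabla\!f(\overline{x})=\nabla g(\overline{x})\overline{\lambda}$, a direct computation shows $\mathcal{R}_{\mathcal{N}_{\Omega}(\overline{x})}(-\nabla\!f(\overline{x}))=\nabla g(\overline{x})\mathcal{R}_{\mathcal{N}_{\mathcal{K}}(\overline{y})}(\overline{\lambda})$, and since $\mathcal{N}_{\mathcal{K}}(\overline{y})$ is a convex cone containing $\overline{\lambda}$ one has $\mathcal{R}_{\mathcal{N}_{\mathcal{K}}(\overline{y})}(\overline{\lambda})=\mathcal{N}_{\mathcal{K}}(\overline{y})-\mathbb{R}_{+}\overline{\lambda}=:P$. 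On the other hand, the polar-of-intersection formula gives $[\mathcal{C}_{\mathcal{K}}(\overline{y},\overline{\lambda})]^{\circ}=[\mathcal{T}_{\mathcal{K}}(\overline{y})\cap\overline{\lambda}^{\perp}]^{\circ}=\overline{\mathcal{N}_{\mathcal{K}}(\overline{y})+\mathbb{R}\overline{\lambda}}=\overline{P}$, where the last equality uses $\overline{\lambda}\in\mathcal{N}_{\mathcal{K}}(\overline{y})$. Since $\nabla g(\overline{x})$ is continuous we have the sandwich $\nabla g(\overline{x})P\subseteq\nabla g(\overline{x})\overline{P}\subseteq\overline{\nabla g(\overline{x})P}$; because the left-hand member $\nabla g(\overline{x})P=\mathcal{R}_{\mathcal{N}_{\Omega}(\overline{x})}(-\nabla\!f(\overline{x}))$ is already closed, all three sets coincide, whence $\nabla g(\overline{x})[\mathcal{C}_{\mathcal{K}}(\overline{y},\overline{\lambda})]^{\circ}=\nabla g(\overline{x})\overline{P}$ is closed.

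I expect the main obstacle to be the relative-interior step, namely recognizing that ${\rm ri}(\mathcal{N}_{\mathcal{K}}(\overline{y}))\cap\mathcal{M}(\overline{x},0,0)\ne\emptyset$ forces $-\nabla\!f(\overline{x})\in{\rm ri}(\mathcal{N}_{\Omega}(\overline{x}))$, which is precisely what collapses the radial cone to a subspace. The two supporting ingredients — the normal-cone chain rule under metric subregularity and the commutation of the relative interior with the linear map $\nabla g(\overline{x})$ — are standard but must be cited carefully, and one should record the convexity of $\mathcal{N}_{\Omega}(\overline{x})$ (guaranteed by the chain-rule representation) so that these convex-analytic tools genuinely apply.
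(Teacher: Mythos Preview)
Your proposal is correct and follows essentially the same route as the paper: both use the chain rule $\mathcal{N}_{\Omega}(\overline{x})=\nabla g(\overline{x})\mathcal{N}_{\mathcal{K}}(\overline{y})$ under metric subregularity, exploit the relative-interior hypothesis to recognize the radial cone as (the linear image of) a subspace, and then finish with the same sandwich $\nabla g(\overline{x})P\subseteq\nabla g(\overline{x})\overline{P}\subseteq\overline{\nabla g(\overline{x})P}$ together with $[\mathcal{C}_{\mathcal{K}}(\overline{y},\overline{\lambda})]^{\circ}=\overline{P}$. The only cosmetic difference is where the ``subspace'' observation is made: the paper works upstairs, showing $\mathbb{R}_{+}(\mathcal{N}_{\mathcal{K}}(\overline{y})-\widehat{\lambda})$ is a subspace (since $\widehat{\lambda}\in{\rm ri}\,\mathcal{N}_{\mathcal{K}}(\overline{y})$) and then pushes it through $\nabla g(\overline{x})$, whereas you first push the relative-interior point through $\nabla g(\overline{x})$ via ${\rm ri}(A C)=A({\rm ri}\,C)$ and then invoke the radial-cone-at-ri-point fact downstairs in $\mathcal{N}_{\Omega}(\overline{x})$.
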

  \begin{proof}
  Notice that $\Omega$ is the feasible set of \eqref{prob} with $(a,b)=(0,0)$.
  Since $\mathcal{F}$ is metrically subregular at $\overline{x}$ for $0$,
  from \cite[Corollary 2.1]{LiuPan17} we have
  $\mathcal{N}_{\Omega}(\overline{x})
  =\nabla\!g(\overline{x})\mathcal{N}_{\mathcal{K}}(g(\overline{x}))$.
  From ${\rm ri}(\mathcal{N}_{\mathcal{K}}(g(\overline{x})))\cap\mathcal{M}(\overline{x},0,0)\ne\emptyset$,
  there exists $\widehat{\lambda}\in {\rm ri}(\mathcal{N}_{\mathcal{K}}(g(\overline{x})))$
  such that
  \begin{equation}\label{temp-equa40}
   \nabla\!f(\overline{x})+\nabla g(\overline{x})\widehat{\lambda}=0.
  \end{equation}
  By the convexity of $\mathcal{N}_{\mathcal{K}}(g(\overline{x}))$,
  it follows that
  \(
     \mathcal{N}_{\mathcal{K}}(g(\overline{x}))+\mathbb{R}_{+}\widehat{\lambda}
     \subseteq\mathcal{N}_{\mathcal{K}}(g(\overline{x})).
   \)
  Then, by following the arguments as for \cite[Proposition 2.1]{Gfrerer17},
  we have
  \begin{align*}
   \nabla g(\overline{x})\mathcal{N}_{\mathcal{K}}(g(\overline{x}))
     \!+[\![\nabla\!g(\overline{x})\widehat{\lambda}]\!]
   &=\nabla g(\overline{x})\big(\mathcal{N}_{\mathcal{K}}(g(\overline{x}))+[\![\widehat{\lambda}]\!]\big)
   =\nabla g(\overline{x})\big(\mathcal{N}_{\mathcal{K}}(g(\overline{x}))-\mathbb{R}_{+}\widehat{\lambda}\big)\\
   &=\nabla g(\overline{x})\big[\mathbb{R}_{+}(\mathcal{N}_{\mathcal{K}}(g(\overline{x}))-\widehat{\lambda})\big]\\
   &={\rm cl}\big\{\nabla g(\overline{x})\big[\mathbb{R}_{+}(\mathcal{N}_{\mathcal{K}}(g(\overline{x}))
     -\widehat{\lambda})\big]\big\}\\
   &={\rm cl}\big\{\nabla g(\overline{x})\mathcal{N}_{\mathcal{K}}(g(\overline{x}))\!
     +[\![\nabla\!g(\overline{x})\widehat{\lambda}]\!] \big\}
  \end{align*}
   where the fourth equality is since $\mathbb{R}_{+}(\mathcal{N}_{\mathcal{K}}(g(\overline{x}))-\widehat{\lambda})$
   is a subspace parallel to the affine hull of $\mathcal{N}_{\mathcal{K}}(g(\overline{x}))$.
   Together with $\mathcal{N}_{\Omega}(\overline{x})
  =\nabla\!g(\overline{x})\mathcal{N}_{\mathcal{K}}(g(\overline{x}))$
  and \eqref{temp-equa40}, we obtain
  \begin{align*}
   \mathcal{R}_{\mathcal{N}_{\Omega}(\overline{x})}(-\nabla\!f(\overline{x}))
   &=\nabla g(\overline{x})\mathcal{N}_{\mathcal{K}}(g(\overline{x}))\!+[\![-\nabla\!f(\overline{x})]
   =\nabla g(\overline{x})\mathcal{N}_{\mathcal{K}}(g(\overline{x}))\!+[\![\nabla\!g(\overline{x})\widehat{\lambda}]\!]\\
   &={\rm cl}\big\{\nabla g(\overline{x})\mathcal{N}_{\mathcal{K}}(g(\overline{x}))\!
     +[\![\nabla\!g(\overline{x})\widehat{\lambda}]\!] \big\}\\
   &={\rm cl}\big\{\nabla g(\overline{x})\mathcal{N}_{\mathcal{K}}(g(\overline{x}))\!
     +[\![-\nabla\!f(\overline{x})]\!] \big\}
   ={\rm cl}\big(\mathcal{R}_{\mathcal{N}_{\Omega}(\overline{x})}(-\nabla\!f(\overline{x}))\big)
  \end{align*}
  where the first equality is due to \cite[Exercise 2.62]{BS00}
  and the closed convexity of $\mathcal{N}_{\Omega}(\overline{x})$.
  The last equality shows that the set
  $\mathcal{R}_{\mathcal{N}_{\Omega}(\overline{x})}(-\nabla\!f(\overline{x}))$ is closed.
  Notice that
  \begin{align}\label{key-inclusion}
    \nabla g(\overline{x})\mathcal{N}_{\mathcal{K}}(g(\overline{x}))\!+[\![-\nabla\!f(\overline{x}))]\!]
    &=\nabla g(\overline{x})\big(\mathcal{N}_{\mathcal{K}}(g(\overline{x}))+[\![\overline{\lambda}]\!]\big)
    \subseteq \nabla g(\overline{x})[\mathcal{C}_{\mathcal{K}}(g(\overline{x}),\overline{\lambda})]^{\circ}\nonumber\\
    &\subseteq {\rm cl}\big\{\nabla g(\overline{x})\mathcal{N}_{\mathcal{K}}(g(\overline{x}))
    +[\![-\nabla\!f(\overline{x}))]\!]\big\}\nonumber\\
    &=\nabla g(\overline{x})\mathcal{N}_{\mathcal{K}}(g(\overline{x}))\!+[\![-\nabla\!f(\overline{x}))]\!],
  \end{align}
  where the first equality is using the fact
  $\nabla f(\overline{x})+\nabla g(\overline{x}) \overline{\lambda}=0$,
  the first inclusion is by
  \(
   [\mathcal{C}_{\mathcal{K}}(g(\overline{x}),\overline{\lambda})]^{\circ}
   ={\rm cl}\big\{\mathcal{N}_{\mathcal{K}}(g(\overline{x}))+[\![\overline{\lambda}]\!]\big\},
  \)
  and the second inclusion is due to \cite[Theorem 6.6]{Roc70} and
  the convexity of $\mathcal{N}_{\mathcal{K}}(g(\overline{x}))$.
  This, along with the closedness of
  $\mathcal{R}_{\mathcal{N}_{\Omega}(\overline{x})}(-\nabla\!f(\overline{x}))$,
  shows that the set $\nabla g(\overline{x})[\mathcal{C}_{\mathcal{K}}(g(\overline{x}),\overline{\lambda})]^{\circ}$
  is closed. The proof is completed.
 \end{proof}
 \begin{remark}\label{remark40}
  {\bf(a)} Notice that the condition ${\rm ri}(\mathcal{N}_{\mathcal{K}}(g(\overline{x})))
  \cap\mathcal{M}(\overline{x},0,0)\ne\emptyset$ is much weaker than
  $\overline{\lambda}\in{\rm ri}(\mathcal{N}_{\mathcal{K}}(g(\overline{x})))$
  when the multiplier set $\mathcal{M}(\overline{x},0,0)$ is not a singleton.

  \medskip
  \noindent
  {\bf(b)} The metric subregularity of $\mathcal{F}(\cdot)=g(\cdot)-\mathcal{K}$
  at $\overline{x}$ for the origin is a very weak constraint qualification (CQ),
  which is clearly implied by Robinson's CQ since the latter is equivalent to
  the metric regularity of $\mathcal{F}$ at $\overline{x}$ for the origin by \cite{DR09}.
  For the research on the metric subregularity of the system $g(x)\in\mathcal{K}$,
  the reader may refer to \cite{Gfrerer16-MOR,Henrion05}.
 \end{remark}

  Now, by combining Lemma \ref{closed-lemma} with Proposition \ref{sufficient-XKKT}
  and Theorem \ref{Chara2-ULip}, we obtain the following sufficient characterization
  for the strong calmness of $\mathcal{S}_{\rm KKT}$.
  \begin{theorem}\label{Chara3-ULip}
  Let $(\overline{x},\overline{\lambda})$ be a KKT point of \eqref{prob}
  with $(a,b)\!=(0,0)$ and $\mathcal{M}(\overline{x},0,0)\ne\{\overline{\lambda}\}$.
  Suppose the multifunction $\mathcal{F}(\cdot)=g(\cdot)-\mathcal{K}$
  is metrically subregular at $\overline{x}$ for $0$. Define
  \[
    \widetilde{\Gamma}(\overline{x},\overline{\lambda}):=
    \Big\{(\xi,\zeta)\in\mathbb{X}\times\mathbb{Y}\!:\ g'(\overline{x})\xi
    \in\mathcal{C}_{\mathcal{K}}(g(\overline{x}),\overline{\lambda}),
    \nabla\!g(\overline{x})\zeta\in\nabla\!g(\overline{x})
    \big(\mathcal{N}_{\mathcal{K}}(g(\overline{x}))\!+[\![\overline{\lambda}]\!]\big)\Big\}.
  \]
  If $\langle g'(\overline{x})\xi,\zeta\rangle\ge0$
  for any $(\xi,\zeta)\in\Sigma(\overline{x},\overline{\lambda})\cap\widetilde{\Gamma}(\overline{x},\overline{\lambda})$
  and ${\rm ri}(\mathcal{N}_{\mathcal{K}}(g(\overline{x})))\cap\mathcal{M}(\overline{x},0,0)\ne\emptyset$,
  the noncriticality of $\overline{\lambda}$ is enough to the strong calmness
  of $\mathcal{S}_{\rm KKT}$ at $(0,0)$ for $(\overline{x},\overline{\lambda})$.
 \end{theorem}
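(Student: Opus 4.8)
The strategy is to assemble Theorem \ref{Chara3-ULip} from the machinery already developed, treating it as a direct corollary of three prior results. The plan is to verify the hypotheses of Proposition \ref{sufficient-XKKT} and of Theorem \ref{Chara2-ULip}, and then chain them together. First I would invoke Lemma \ref{closed-lemma}: since $\mathcal{F}(\cdot)=g(\cdot)-\mathcal{K}$ is metrically subregular at $\overline{x}$ for the origin and ${\rm ri}(\mathcal{N}_{\mathcal{K}}(g(\overline{x})))\cap\mathcal{M}(\overline{x},0,0)\ne\emptyset$, the lemma yields that $\nabla g(\overline{x})[\mathcal{C}_{\mathcal{K}}(g(\overline{x}),\overline{\lambda})]^{\circ}$ is closed. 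This discharges the closedness hypothesis required by Proposition \ref{sufficient-XKKT}.

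The next step is to reconcile the two angle conditions. Proposition \ref{sufficient-XKKT} assumes $\langle g'(\overline{x})\xi,\zeta\rangle\ge 0$ on $\Sigma(\overline{x},\overline{\lambda})\cap\Gamma(\overline{x},\overline{\lambda})$, whereas the present theorem only assumes it on the apparently larger set $\Sigma(\overline{x},\overline{\lambda})\cap\widetilde{\Gamma}(\overline{x},\overline{\lambda})$. The crucial observation is that the two hypotheses are in fact equivalent under the running assumptions. Indeed, from the inclusion chain \eqref{key-inclusion} established in the proof of Lemma \ref{closed-lemma}, one has $\nabla g(\overline{x})(\mathcal{N}_{\mathcal{K}}(g(\overline{x}))+[\![\overline{\lambda}]\!])=\nabla g(\overline{x})[\mathcal{C}_{\mathcal{K}}(g(\overline{x}),\overline{\lambda})]^{\circ}$, so the defining conditions $\zeta\in[\mathcal{C}_{\mathcal{K}}(g(\overline{x}),\overline{\lambda})]^{\circ}$ in $\Gamma$ and $\nabla g(\overline{x})\zeta\in\nabla g(\overline{x})(\mathcal{N}_{\mathcal{K}}(g(\overline{x}))+[\![\overline{\lambda}]\!])$ in $\widetilde{\Gamma}$ give the same values of $\nabla g(\overline{x})\zeta$, and hence the same membership in $\Sigma$ and the same inner products $\langle g'(\overline{x})\xi,\zeta\rangle$ that matter for the angle condition. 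The main subtlety here is to argue that passing to the image under $\nabla g(\overline{x})$ does not change the value of $\langle g'(\overline{x})\xi,\zeta\rangle$; this follows because $\langle g'(\overline{x})\xi,\zeta\rangle=\langle \xi,\nabla g(\overline{x})\zeta\rangle$ depends on $\zeta$ only through $\nabla g(\overline{x})\zeta$, so the sign condition transfers verbatim between the two feasible sets. Thus the angle hypothesis of Theorem \ref{Chara3-ULip} is exactly the one needed by Proposition \ref{sufficient-XKKT}.

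With both hypotheses of Proposition \ref{sufficient-XKKT} verified, and $\overline{\lambda}$ noncritical by assumption, that proposition delivers the pseudo-isolated calmness of $\mathcal{X}_{\rm KKT}$ at the origin for $\overline{x}$. Separately, by Proposition \ref{calm-MGx} and Lemma \ref{icalm-MMGmap}, the condition ${\rm ri}(\mathcal{N}_{\mathcal{K}}(g(\overline{x})))\cap\mathcal{M}(\overline{x},0,0)\ne\emptyset$ (equivalently ${\rm ri}(\mathcal{N}_{\mathcal{K}}(g(\overline{x})))\cap\mathcal{H}(\nabla\!f(\overline{x}))\ne\emptyset$) yields the calmness of $\mathcal{M}$ at $(\overline{x},0,0)$ for $\overline{\lambda}$. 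Finally I would apply Theorem \ref{Chara2-ULip}, whose two sufficient conditions---pseudo-isolated calmness of $\mathcal{X}_{\rm KKT}$ and calmness of $\mathcal{M}$---have now both been established, to conclude the strong calmness of $\mathcal{S}_{\rm KKT}$ at $(0,0)$ for $(\overline{x},\overline{\lambda})$.

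I expect the only real content to be the identification of the two angle-condition domains in the second step; everything else is a matter of quoting the prior results in the correct order. The rest of the proof is bookkeeping, so the write-up should be short.
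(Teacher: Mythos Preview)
Your proposal is correct and follows essentially the same route as the paper. The paper's proof re-runs the argument of Proposition~\ref{sufficient-XKKT} inline, noting only that the $\eta$ produced in Step~2 satisfies $\nabla g(\overline{x})\eta\in\nabla g(\overline{x})[\mathcal{C}_{\mathcal{K}}(g(\overline{x}),\overline{\lambda})]^{\circ}=\nabla g(\overline{x})(\mathcal{N}_{\mathcal{K}}(g(\overline{x}))+[\![\overline{\lambda}]\!])$ by \eqref{key-inclusion}, so that $(\xi,\eta)\in\widetilde{\Gamma}(\overline{x},\overline{\lambda})$ and the present angle hypothesis applies; you instead black-box Proposition~\ref{sufficient-XKKT} after checking its hypotheses, which amounts to the same observation that $\Gamma(\overline{x},\overline{\lambda})\subseteq\widetilde{\Gamma}(\overline{x},\overline{\lambda})$. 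One small remark: you only need this inclusion (the angle condition on $\widetilde{\Gamma}$ is at least as strong as on $\Gamma$), not the full equivalence you argue, so the ``main subtlety'' you flag is in fact trivial in the direction you need.
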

 \begin{proof}
  The proof is same as that of Proposition \ref{sufficient-XKKT} except for
  the following fact
  \[
   \nabla g(\overline{x})\eta
   \in\nabla\!g(\overline{x})[\mathcal{C}_{\mathcal{K}}(g(\overline{x}),\overline{\lambda})]^{\circ}
    =\nabla\!g(\overline{x})
    \big(\mathcal{N}_{\mathcal{K}}(g(\overline{x}))\!+[\![\overline{\lambda}]\!]\big),
  \]
  where $\eta$ is from Step 2 in the proof of Proposition \ref{sufficient-XKKT},
  and the equality is by \eqref{key-inclusion}.
 \end{proof}
 \begin{remark}
  {\bf(a)} For the case where $\mathcal{K}$ is the semidefinite positive cone,
  the condition that $\langle g'(\overline{x})\xi,\zeta\rangle\ge0$
  for any $(\xi,\zeta)\in\Sigma(\overline{x},\overline{\lambda})\cap\widetilde{\Gamma}(\overline{x},\overline{\lambda})$
  is implied by the condition (ii) of \cite[Theorem 3.3]{Zhang17},
  and the closedness condition of
  $\nabla g(\overline{x})[\mathcal{C}_{\mathcal{K}}(g(\overline{x}),\overline{\lambda})]^{\circ}$
  there is removed. Thus, the result of Theorem \ref{Chara3-ULip} improves that of
 \cite[Theorem 3.3]{Zhang17} in this setting.

  \medskip
  \noindent
  {\bf(b)} When $\mathcal{M}(\overline{x},0,0)=\{\overline{\lambda}\}$,
  the strong calmness of $\mathcal{S}_{\rm KKT}$ at the origin
  for $(\overline{x},\overline{\lambda})$ becomes its isolated calmness
  at the origin for $(\overline{x},\overline{\lambda})$, which is equivalent to
  the calmness of $\mathcal{M}$ at $(\overline{x},0,0)$ for $\overline{\lambda}$
  along with the noncriticality of the multiplier $\overline{\lambda}$; see Appendix.
 \end{remark}

  Let $(\overline{x},\overline{\lambda})$ be a KKT point of \eqref{prob}
  with $(a,b)=(0,0)$. Recall that the second-order sufficient condition
  holds at $\overline{x}$ w.r.t. the multiplier $\overline{\lambda}$
  for \eqref{prob} with $(a,b)=(0,0)$ if
  \begin{equation}\label{SOSC-fix}
   \langle d,\nabla_{xx}^2L(\overline{x},\overline{\lambda})d\rangle
    -\sigma\big(\lambda,\mathcal{T}_{\mathcal{K}}^2(g(\overline{x}),g'(\overline{x})d)\big)> 0
    \quad\ \forall d\in\mathcal{C}(\overline{x})\backslash\{0\}.
  \end{equation}
  Clearly, this SOSC is stronger than the SOSC at $\overline{x}$ stated
  in \cite[Theorem 3.45 $\&$ 3.137]{BS00}.
  By Proposition \ref{noncritical-prop} and Lemma \ref{dir-proj},
  a simple argument by contradiction shows that the SOSC at $\overline{x}$
  w.r.t. $\overline{\lambda}$ implies the noncriticality of
  $\overline{\lambda}$. Then, under the conditions of Theorem \ref{Chara3-ULip},
  the SOSC at $\overline{x}$ w.r.t. $\overline{\lambda}$ is sufficient
  for the strong calmness of $\mathcal{S}_{\rm KKT}$.
  \section{Conclusions}

  In this paper, for a class of canonically perturbed conic programming,
  we have provided two equivalent characterizations for the strong calmness
  of the KKT solution mapping, i.e., this property is equivalent to
  a local error bound for solutions of perturbed KKT system, as well as
  the pseudo-isolated calmness of the stationary point mapping along with
  the calmness of the multiplier set mapping. In addition, some weaker sufficient
  conditions than \cite[Theorem 3.2]{Cui16} and \cite[Theorem 3.3]{Zhang17}
  for this property are also given in terms of the noncriticality of
  the Lagrange multiplier. The obtained results are crucial to achieve
  fast convergence of some algorithms such as the ALM for many important
  non-polyhedral convex conic optimization problems with degenerate solutions.

  \bigskip
  \noindent
  {\bf\large Appendix}
  \begin{aproposition}
  Let $(\overline{x},\overline{\lambda})$ be a KKT point of the problem \eqref{prob}
  with $(a,b)\!=\!(0,0)$. Then, the isolated calmness of $\mathcal{X}_{\rm KKT}$
  at the origin for $\overline{x}$ implies its pseudo-isolated calmness at
  the origin for $\overline{x}$. Also, the converse conclusion holds provided that
  Robinson's CQ holds at $\overline{x}$ for the problem \eqref{prob} with $(a,b)\!=\!(0,0)$
  and $\mathcal{M}(\overline{x},0,0)=\{\overline{\lambda}\}$.
 \end{aproposition}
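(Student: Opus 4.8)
The plan is to handle the two implications separately, isolating the one nontrivial ingredient into a localization statement for the multipliers.

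The forward implication is immediate and uses neither Robinson's CQ nor the uniqueness of $\overline{\lambda}$. Suppose $\mathcal{X}_{\rm KKT}$ is isolated calm at the origin for $\overline{x}$ with constants $\varepsilon,\delta,\kappa$, so that $\mathcal{X}_{\rm KKT}(a,b)\cap\mathbb{B}_{\varepsilon}(\overline{x})\subseteq\{\overline{x}\}+\kappa\|(a,b)\|\mathbb{B}_{\mathbb{X}}$ whenever $\|(a,b)\|\le\delta$. Given such $(a,b)$ and any $(x,\lambda)\in\mathcal{S}_{\rm KKT}(a,b)\cap\mathbb{B}_{\varepsilon}((\overline{x},\overline{\lambda}))$, one has $x\in\mathcal{X}_{\rm KKT}(a,b)$ and $\|x-\overline{x}\|\le\varepsilon$, whence the inclusion gives $\|x-\overline{x}\|\le\kappa\|(a,b)\|$, which is exactly the pseudo-isolated calmness of Definition \ref{Pseudo-icalm}.

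For the converse the key step is the following multiplier-localization claim: under Robinson's CQ at $\overline{x}$ and $\mathcal{M}(\overline{x},0,0)=\{\overline{\lambda}\}$, for every $\rho>0$ there are $\varepsilon',\delta'>0$ so that $x\in\mathbb{B}_{\varepsilon'}(\overline{x})$, $\|(a,b)\|\le\delta'$ and $(x,\lambda)\in\mathcal{S}_{\rm KKT}(a,b)$ force $\|\lambda-\overline{\lambda}\|\le\rho$. I would prove this by contradiction: otherwise there are $x^k\to\overline{x}$, $(a^k,b^k)\to(0,0)$ and $\lambda^k$ with $(x^k,\lambda^k)\in\mathcal{S}_{\rm KKT}(a^k,b^k)$ but $\|\lambda^k-\overline{\lambda}\|\ge\rho$. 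If $\{\lambda^k\}$ is bounded, a subsequential limit $\lambda^*$ satisfies $\nabla\!f(\overline{x})+\nabla g(\overline{x})\lambda^*=0$ and, by the outer semicontinuity of $\mathcal{N}_{\mathcal{K}}$ together with $g(x^k)-b^k\to g(\overline{x})$, also $\lambda^*\in\mathcal{N}_{\mathcal{K}}(g(\overline{x}))$; hence $\lambda^*\in\mathcal{M}(\overline{x},0,0)=\{\overline{\lambda}\}$, contradicting $\|\lambda^k-\overline{\lambda}\|\ge\rho$. If $\{\lambda^k\}$ is unbounded, set $d^k:=\lambda^k/\|\lambda^k\|$ and pass to a subsequence $d^k\to d$ with $\|d\|=1$; dividing $\nabla\!f(x^k)+\nabla g(x^k)\lambda^k=a^k$ by $\|\lambda^k\|$ and letting $k\to\infty$ yields $\nabla g(\overline{x})d=0$, while $d^k\in\mathcal{N}_{\mathcal{K}}(g(x^k)-b^k)$ and outer semicontinuity give $d\in\mathcal{N}_{\mathcal{K}}(g(\overline{x}))$. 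Thus $d$ is a nonzero element of $\{\lambda\in\mathcal{N}_{\mathcal{K}}(g(\overline{x}))\,:\,\nabla g(\overline{x})\lambda=0\}$, contradicting the dual form of Robinson's CQ, which asserts that this set reduces to $\{0\}$.

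With the claim established, let $\varepsilon_1,\delta_1,\kappa_1$ be the pseudo-isolated calmness constants, apply the claim with $\rho=\varepsilon_1/\sqrt{2}$ to obtain $\varepsilon',\delta'$, and put $\varepsilon:=\min(\varepsilon',\varepsilon_1/\sqrt{2})$, $\delta:=\min(\delta',\delta_1)$. For $\|(a,b)\|\le\delta$ and $x\in\mathcal{X}_{\rm KKT}(a,b)\cap\mathbb{B}_{\varepsilon}(\overline{x})$, choose $\lambda$ with $(x,\lambda)\in\mathcal{S}_{\rm KKT}(a,b)$; the claim gives $\|\lambda-\overline{\lambda}\|\le\varepsilon_1/\sqrt{2}$, so $(x,\lambda)\in\mathbb{B}_{\varepsilon_1}((\overline{x},\overline{\lambda}))$, and pseudo-isolated calmness yields $\|x-\overline{x}\|\le\kappa_1\|(a,b)\|$. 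Setting $(a,b)=(0,0)$ in this estimate forces $\mathcal{X}_{\rm KKT}(0,0)\cap\mathbb{B}_{\varepsilon}(\overline{x})=\{\overline{x}\}$, so the estimate together with this local uniqueness is precisely the isolated calmness of $\mathcal{X}_{\rm KKT}$ at the origin for $\overline{x}$.

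The main obstacle is the localization claim, and within it the unbounded case: one must prevent the multipliers from escaping to infinity, which is exactly where Robinson's CQ (in its recession form $\{\lambda\in\mathcal{N}_{\mathcal{K}}(g(\overline{x})):\nabla g(\overline{x})\lambda=0\}=\{0\}$) is indispensable, while the uniqueness $\mathcal{M}(\overline{x},0,0)=\{\overline{\lambda}\}$ is what closes the bounded case. Everything else is routine bookkeeping with radii and the continuity of $\nabla\!f$ and $\nabla g$ and the outer semicontinuity of $\mathcal{N}_{\mathcal{K}}$.
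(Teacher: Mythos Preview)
Your proof is correct. The paper argues the converse by contradiction on the isolated calmness of $\mathcal{X}_{\rm KKT}$ itself: it takes sequences $(a^k,b^k)\to 0$, $x^k\to\overline{x}$ with $\|(a^k,b^k)\|/\|x^k-\overline{x}\|\to 0$, picks multipliers $\lambda^k$, invokes Robinson's CQ by citing \cite{Robinson82} to get local boundedness of $\mathcal{M}$ (hence boundedness of $\{\lambda^k\}$), and then passes to a limit $\widehat{\lambda}\in\mathcal{M}(\overline{x},0,0)=\{\overline{\lambda}\}$ to contradict the fact that pseudo-isolated calmness forces $\|\lambda^k-\overline{\lambda}\|\ge\epsilon_0$. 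You instead isolate the multiplier-localization statement as a standalone claim, prove it from scratch---handling the unbounded case explicitly via the dual form of Robinson's CQ rather than citing an external boundedness result---and then deduce isolated calmness by a direct constructive argument with explicit radii. The two routes share the same core mechanism (Robinson's CQ bounds the multipliers, uniqueness of $\overline{\lambda}$ pins them down, pseudo-isolated calmness finishes), but your version is more self-contained and yields explicit constants, while the paper's contradiction argument is shorter at the cost of relying on a cited black box.
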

 \begin{proof}
  The proof of the first part is easy, and we here focus on
  the proof of the second part. Suppose that $\mathcal{X}_{\rm KKT}$
  has the pseudo-isolated calmness at the origin for $\overline{x}$.
  If $\mathcal{X}_{\rm KKT}$ is not isolated calm at the origin for $\overline{x}$,
  then there exist the sequences $\{(a^k,b^k)\}\to 0$  and $\{x^k\}\to\overline{x}$
  with $x^k\in\mathcal{X}_{\rm KKT}(a^k,b^k)$ for each $k\in\mathbb{N}$ such that
   \(
     \frac{\|(a^k,b^k)\|}{\|x^k-\overline{x}\|}\to 0
   \)
  as $k\to\infty$. Since $x^k\in\mathcal{X}_{\rm KKT}(a^k,b^k)$,
  for each $k\in\mathbb{N}$ there exists $\lambda^k\in\mathcal{M}(x^k,a^k,b^k)$
  such that $(x^k,\lambda^k)\in \mathcal{S}_{\rm KKT}(a^k,b^k)$.
  Together with the pseudo-isolated calmness of $\mathcal{X}_{\rm KKT}$
  at the origin for $\overline{x}$, there exists $\epsilon_0>0$ such that
  for all sufficiently large $k$, $\|\lambda^k-\overline{\lambda}\|\ge\epsilon_0$.
  On the other hand, since Robinson's CQ holds at $\overline{x}$ for
  \eqref{prob} with $(a,b)\!=\!(0,0)$, the multifunction $\mathcal{M}$
  is locally bounded at $(\overline{x},0,0)$ in the sense of
  \cite[Definition 5.14]{RW98} (see also the proof of \cite[Theorem 3.2]{Robinson82}).
  Thus, the sequence $\{\lambda^k\}$ is bounded.
  Without loss of generality, assume that $\lambda^k\to\widehat{\lambda}$.
  Since $(x^k,\lambda^k)\in \mathcal{S}_{\rm KKT}(a^k,b^k)$, for each $k\in\mathbb{N}$
  it holds that
  \[
   \nabla\!f(x^k)\!+\nabla\!g(x^k)\lambda^k=a^k\ \ {\rm and}\ \
   g(x^k)-\Pi_{\mathcal{K}}(g(x^k)-b^k\!+\!\lambda^k)=b^k
   \]
   Taking the limit $k\to \infty$ on the last two equalities, we obtain
   $\widehat{\lambda}\in \mathcal{M}(\overline{x},0,0)=\{\overline{\lambda}\}$.
   This yields a contradiction to the result that
   $\|\lambda^k-\overline{\lambda}\|\ge\epsilon_0$ for all sufficiently large $k$.
   \end{proof}
  \begin{aproposition}\label{icalm-SKKTM}
   Let $(\overline{x},\overline{\lambda})$ be a KKT point of \eqref{prob} with $(a,b)=(0,0)$.
   The multifunction $\mathcal{S}_{\rm KKT}$ is isolated calm at $(0,0)$
   for $(\overline{x},\overline{\lambda})$ if and only if $\mathcal{M}$ is
   isolated calm at $(\overline{x},0,0)$ for $\overline{\lambda}$
   and the multiplier $\overline{\lambda}$ is noncritical for
   \eqref{prob} with $(a,b)=(0,0)$.
  \end{aproposition}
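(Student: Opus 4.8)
The plan is to reduce both sides of the equivalence to statements about the graphical derivative of $\mathcal{S}_{\rm KKT}$ at $(0,0)$ for $(\overline{x},\overline{\lambda})$, using Lemma \ref{chara-icalm} throughout. The starting point is the characterization of $D\mathcal{S}_{\rm KKT}((0,0)|(\overline{x},\overline{\lambda}))$ from \cite[Lemma 18 \& 19]{DingSZ17} already invoked in the proof of Proposition \ref{X-icalm}: a pair $(\xi,v)\in\mathbb{X}\times\mathbb{Y}$ lies in $D\mathcal{S}_{\rm KKT}((0,0)|(\overline{x},\overline{\lambda}))(0,0)$ if and only if it solves system \eqref{noncritical-system}. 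Hence, by Lemma \ref{chara-icalm}, $\mathcal{S}_{\rm KKT}$ is isolated calm at $(0,0)$ for $(\overline{x},\overline{\lambda})$ if and only if $(\xi,v)=(0,0)$ is the only solution of \eqref{noncritical-system}.

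Next I would transfer this to the graphical derivative of $\mathcal{M}$ by the coordinate-swap argument used for $\mathcal{X}$ in Proposition \ref{X-icalm}(b). Introducing the invertible linear map $\mathcal{B}(x,a,b,\lambda):=(a,b,x,\lambda)$, one has ${\rm gph}\,\mathcal{M}=\mathcal{B}^{-1}{\rm gph}\,\mathcal{S}_{\rm KKT}$, so \cite[Exercise 6.7]{RW98} yields $\mathcal{T}_{{\rm gph}\,\mathcal{M}}(\overline{x},0,0,\overline{\lambda})=\mathcal{B}^{-1}\mathcal{T}_{{\rm gph}\,\mathcal{S}_{\rm KKT}}(0,0,\overline{x},\overline{\lambda})$. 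Consequently $v\in D\mathcal{M}((\overline{x},0,0)|\overline{\lambda})(0,0,0)$ if and only if $(0,v)\in D\mathcal{S}_{\rm KKT}((0,0)|(\overline{x},\overline{\lambda}))(0,0)$, i.e.\ $(0,v)$ solves \eqref{noncritical-system}. By Lemma \ref{chara-icalm} again, $\mathcal{M}$ is isolated calm at $(\overline{x},0,0)$ for $\overline{\lambda}$ if and only if $v=0$ is the only $v$ for which $(0,v)$ solves \eqref{noncritical-system}.

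With these two reductions in hand the equivalence follows by splitting the solution set of \eqref{noncritical-system} according to its $\xi$-component. For the forward implication, assume $\mathcal{S}_{\rm KKT}$ is isolated calm, so $(0,0)$ is the unique solution of \eqref{noncritical-system}; then every solution has $\xi=0$, which is exactly the noncriticality of $\overline{\lambda}$ by Proposition \ref{noncritical-prop}, and simultaneously the only solution with $\xi=0$ is $(0,0)$, which is precisely the isolated calmness of $\mathcal{M}$ by the previous paragraph. For the converse, take any solution $(\xi,v)$ of \eqref{noncritical-system}; noncriticality forces $\xi=0$ via Proposition \ref{noncritical-prop}, and then the isolated calmness of $\mathcal{M}$ forces $v=0$, so $(0,0)$ is the unique solution and $\mathcal{S}_{\rm KKT}$ is isolated calm.

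The two graphical-derivative reductions are routine, as they mirror arguments already carried out in the excerpt. The main obstacle is bookkeeping the precise form of $D\mathcal{S}_{\rm KKT}((0,0)|(\overline{x},\overline{\lambda}))(0,0)$ and verifying that it reproduces \eqref{noncritical-system} verbatim—in particular keeping track of the adjoint $\nabla g(\overline{x})$ versus $g'(\overline{x})$ and the role of the directional derivative $\Pi_{\mathcal{K}}'$. Once this is pinned down, the decomposition of the solution set into its $\xi$- and $v$-components is immediate and the two directions of the equivalence become symmetric.
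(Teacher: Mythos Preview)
Your argument is correct and follows the same overall strategy as the paper: reduce both sides to graphical-derivative conditions via Lemma \ref{chara-icalm}, characterize $D\mathcal{S}_{\rm KKT}((0,0)|(\overline{x},\overline{\lambda}))(0,0)$ as the solution set of \eqref{noncritical-system} using \cite[Lemma 18 \& 19]{DingSZ17}, and then split that solution set into its $\xi$- and $v$-components to read off noncriticality (Proposition \ref{noncritical-prop}) and the isolated calmness of $\mathcal{M}$, respectively.

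The one place where you diverge from the paper is in the treatment of $\mathcal{M}$. You relate ${\rm gph}\,\mathcal{M}$ directly to ${\rm gph}\,\mathcal{S}_{\rm KKT}$ through the invertible linear map $\mathcal{B}$, exactly mirroring the device already used for $\mathcal{X}$ in Proposition \ref{X-icalm}(b); this immediately gives $D\mathcal{M}((\overline{x},0,0)|\overline{\lambda})(0,0,0)=\{v:(0,v)\ \text{solves}\ \eqref{noncritical-system}\}$. The paper instead passes through the auxiliary map $\mathcal{G}_{\overline{x}}$: it first argues (as in Lemma \ref{icalm-MMGmap}) that isolated calmness of $\mathcal{M}$ is equivalent to that of $\mathcal{G}_{\overline{x}}$, and then computes $D\mathcal{G}_{\overline{x}}$ by relating ${\rm gph}\,\mathcal{G}_{\overline{x}}$ to ${\rm gph}\,\Phi^{-1}$ for a suitable $\Phi$ built from $\nabla g(\overline{x})$ and $\Pi_{\mathcal{K}}$. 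Both routes land on the same system $\nabla g(\overline{x})v=0$, $\Pi_{\mathcal{K}}'(g(\overline{x})+\overline{\lambda};v)=0$, which is precisely \eqref{noncritical-system} with $\xi=0$. Your route is shorter and more symmetric with the earlier treatment of $\mathcal{X}$; the paper's route has the advantage of making the connection to $\mathcal{G}_{\overline{x}}$ and Lemma \ref{icalm-MMGmap} explicit, which is thematically consistent with how calmness (as opposed to isolated calmness) of $\mathcal{M}$ is handled elsewhere in the paper.
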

  \begin{proof}
  By \cite[Lemma 18\,$\&$ 19]{DingSZ17}, $\mathcal{S}_{\rm KKT}$ is isolated calm
  at the origin for $(\overline{x},\overline{\lambda})$ iff system
  \begin{equation*}
    \left\{\begin{array}{ll}
    \nabla_{xx}^2L(\overline{x},\overline{\lambda})u+\nabla\!g(\overline{x})v=0,\\
    g'(\overline{x})u-\Pi_{\mathcal{K}}'(g(\overline{x})\!+\!\overline{\lambda};g'(\overline{x})u\!+\!v)=0
   \end{array}\right.
  \end{equation*}
  has only the trivial solution $(u,v)=(0,0)$. From the proof of Lemma \ref{icalm-MMGmap},
  it is easy to obtain that $\mathcal{M}$ is isolated calm at $(\overline{x},0,0)$
  for $\overline{\lambda}$ iff $\mathcal{G}_{\overline{x}}$ is isolated
  calm at $(\nabla\!f(\overline{x}),\nabla g(\overline{x}))$ for $\overline{\lambda}$.
  Thus, together with Proposition \ref{noncritical-prop}, it suffices to
  argue that the isolated calmness of $\mathcal{G}_{\overline{x}}$ at
  $(\nabla\!f(\overline{x}),\nabla g(\overline{x}))$ for $\overline{\lambda}$
  is equivalent to saying $\Delta\lambda=0$ is the unique solution of
   \begin{equation}\label{MGcalm-system0}
     \left\{\begin{array}{ll}
      \nabla\!g(\overline{x})\Delta\lambda=0,\\
     \Pi_{\mathcal{K}}'(g(\overline{x})+\!\overline{\lambda};\Delta\lambda)=0.
     \end{array}\right.
   \end{equation}
   Indeed, it is not hard to check that
   ${\rm gph}\,\mathcal{G}_{\overline{x}}=\mathcal{A}^{-1}\big({\rm gph}\,\Phi^{-1}\big)$
   with $\Phi$ and $\mathcal{A}$ defined by
   \[
     \Phi(\lambda):=\!\left(\begin{matrix}
                     \nabla\!g(\overline{x})\lambda\\
                      -\Pi_{\mathcal{K}}(\lambda)
                      \end{matrix}\right)\ \ {\rm and}\ \
     \mathcal{A}(\eta,y,\lambda):=\!\left(\begin{matrix}
                -\eta\!+\!\nabla\!g(\overline{x})y\\ -y \\ \lambda+y
               \end{matrix}\right).
   \]
   Write $\overline{\eta}=\nabla\!f(\overline{x})$ and $\overline{y}=g(\overline{x})$.
   Since $\mathcal{A}$ is bijective, by \cite[Exercise 6.7]{RW98} we have
   \[
     \mathcal{T}_{{\rm gph}\,\mathcal{G}_{\overline{x}}}(\overline{\eta},\overline{y},\overline{\lambda})
     =\mathcal{A}^{-1}\!\left[\mathcal{T}_{{\rm gph}\,\Phi^{-1}}
     \big(\!-\!\overline{\eta}+\!\nabla\!g(\overline{x})\overline{y},-\overline{y},
     \overline{\lambda}+\overline{y}\big)\right].
   \]
   This, along with the definition of graphical derivative, gives the following equivalence:
   \begin{equation*}
    \Delta\lambda\in D\mathcal{G}_{\overline{x}}((\overline{\eta},\overline{y})|\,\overline{\lambda})(0,0)
    \Longleftrightarrow \Delta\lambda\in D\Phi^{-1}
    \big((-\overline{\eta}\!+\!\nabla\!g(\overline{x})\overline{y},-\overline{y})|\,\overline{y}\!+\!\overline{\lambda}\big)(0,0).
   \end{equation*}
   Then,
   \(
     D\mathcal{G}_{\overline{x}}((\overline{\eta},\overline{y})|\,\overline{\lambda})(0,0)=\!\{0\}
   \)
   if and only if
   \(
     D\Phi^{-1}\big((-\overline{\eta}+\!\nabla\!g(\overline{x})\overline{y},-\overline{y})|\,\overline{y}+\overline{\lambda}\big)(0,0)
     =\{0\}.
   \)
   By Lemma \ref{chara-icalm}, $\mathcal{G}_{\overline{x}}$ is isolated calm at $(\overline{\eta},\overline{y})$
   for $\overline{\lambda}$ if and only if $\Phi^{-1}$ is isolated calm
   at $(-\overline{\eta}+\!\nabla\!g(\overline{x})\overline{y},-\overline{y})$ for $\overline{y}+\overline{\lambda}$.
   In addition, by virtue of the Lipschitz continuity and the directional differentiability of $\Phi$,
   the following equivalence holds:
  \begin{align*}
   \Phi'(\overline{y}\!+\!\overline{\lambda};\Delta\lambda)
     =\left(\begin{matrix}
              \Delta\eta\\ \Delta y
      \end{matrix}\right)
   &\Longleftrightarrow (\Delta\eta,\Delta y)\in D\Phi(\overline{y}+\!\overline{\lambda}\,|(-\overline{\eta}+\nabla\!g(\overline{x})\overline{y},-\overline{y}))(\Delta\lambda)\\
   &\Longleftrightarrow
   \Delta\lambda\in D\Phi^{-1}((-\overline{\eta}+\!\nabla\!g(\overline{x})\overline{y},-\overline{y})|\,\overline{y}\!+\!\overline{\lambda})
     (\Delta\eta,\Delta y).
  \end{align*}
  Thus, using Lemma \ref{chara-icalm} again delivers the desired statement.
  \end{proof}
  \end{document}